\newif\ifmarek
\DeclareFontFamily{OT1}{eusb}{} \DeclareFontShape{OT1}{eusb}{m}{n} {<5> <6> <7> <8> <9> <10> <11> <12> <14.4> eusb10}{}
\DeclareMathAlphabet{\eusb}{OT1}{eusb}{m}{n}
\DeclareFontFamily{OT1}{eusm}{} \DeclareFontShape{OT1}{eusm}{m}{n} {<5> <6> <7> <8> <9> <10> <11> <12> <14.4> eusm10}{}
\DeclareMathAlphabet{\eusm}{OT1}{eusm}{m}{n}
\DeclareFontFamily{OT1}{eufm}{} \DeclareFontShape{OT1}{eufm}{m}{n} {<5> <6> <7> <8> <9> <10> <11> <12> <14.4> eufm10}{}
\DeclareMathAlphabet{\mathfrak}{OT1}{eufm}{m}{n}
\DeclareFontFamily{OT1}{fraktura}{}
\DeclareFontShape{OT1}{fraktura}{m}{n} {<5> <6> <7> <8> <9> <10> <11> <12> <13> <14.4> [1.1] eufm10}{}
\DeclareMathAlphabet{\fraktura}{OT1}{fraktura}{m}{n}
\DeclareFontFamily{OT1}{cmfi}{} \DeclareFontShape{OT1}{cmfi}{m}{n} {<5> <6> <7> <8> <9> <10> <11> <12> <13> <14.4> [0.9] cmfi10}{}
\DeclareMathAlphabet{\cmfi}{OT1}{cmfi}{b}{n}
\DeclareFontFamily{OT1}{cmss}{} \DeclareFontShape{OT1}{cmss}{m}{n} {<5> <6> <7> <8> <9> <10> <11> <12> <13> <14.4> cmss10}{}
\DeclareMathAlphabet{\cmss}{OT1}{cmss}{m}{n}
\renewcommand{\mathcal}{\eusm}
\newtheoremstyle{thm}{1.5ex}{1.5ex}{\itshape\rmfamily}{} {\bfseries\rmfamily}{}{2ex}{}
\newtheoremstyle{def}{1.5ex}{1.5ex}{\slshape\rmfamily}{} {\bfseries\rmfamily}{}{2ex}{}
\newtheoremstyle{rem}{1.5ex}{1.5ex}{\rmfamily}{} {\bfseries\rmfamily}{} {1.5ex}{}
\newenvironment{proofsect}[1] {\vskip0.1cm\noindent{\rmfamily\itshape#1.}}{\qed\vspace{0.15cm}}%{\newline\vspace{0.15cm}}
\theoremstyle{thm}
\newtheorem{theorem}{Theorem}[section]
\newtheorem{lemma}[theorem]{Lemma}
\newtheorem{proposition}[theorem]{Proposition}
\newtheorem*{Main Theorem}{Main Theorem.}
\newtheorem{corollary}[theorem]{Corollary}
\newtheorem*{special theorem}{Lindeberg-Feller Theorem for Martingales}
\newtheorem{assumption}[theorem]{Assumption}
\theoremstyle{def}
\theoremstyle{rem}
\newtheorem{remark}[theorem]{{\bfseries Remark}}
\numberwithin{equation}{section}
\renewcommand{\section}{\secdef\sct\sect}
\newcommand{\sct}[2][default]{%
\refstepcounter{section}
\addcontentsline{toc}{section}{{\tocsection {}{\thesection}{\!\!\!\!#1\dotfill}}{}}
\vspace{0.7cm}
\centerline{\scshape\thesection.\ #1} \nopagebreak \vspace{0.2cm}}
\newcommand{\sect}[1]{%
\vspace{0.4cm} \centerline{\large\scshape\rmfamily #1}
\vspace{0.2cm}}
\renewcommand{\subsection}{\secdef\subsct\sbsect}
\newcommand{\subsct}[2][default]{\refstepcounter{subsection}
\addcontentsline{toc}{subsection}
{{\tocsection{\!\!}{\hspace{1.2em}\thesubsection}{\!\!\!\!#1\dotfill}}{}}
\nopagebreak\vspace{0.75\baselineskip} {\flushleft\bf
\thesubsection~\bf #1.~}
\\*[3mm]\noindent
\nopagebreak}
\newcommand{\sbsect}[1]{\nopagebreak\vspace{0.75\baselineskip}\noindent
\textbf{#1.~}\\*[3mm]\noindent
\nopagebreak}
\renewcommand{\subsubsection}{%
\secdef \subsubsect\sbsbsect}
\newcommand{\subsubsect}[2][default]{%
\refstepcounter{subsubsection} 
\addcontentsline{toc}{subsubsection}{{\tocsection{\!\!}
{\hspace{3.05em}\thesubsubsection}{\!\!\!\!#1\dotfill}}{}}
\nopagebreak
\vspace{0.15\baselineskip} \nopagebreak {\flushleft\rmfamily
\itshape\thesubsubsection
\ \rmfamily #1\/.}\ }
\newcommand{\sbsbsect}[1]{\vspace{0.1cm}\noindent
\rmfamily \itshape
\arabic{section}.\arabic{subsection}.\arabic{subsubsection} \
\sffamily #1\/.\ }
\renewcommand{\caption}[1]{%
\vglue0.5cm
\refstepcounter{figure}
\begin{minipage}{0.9\textwidth}\small {\sc Figure~\thefigure. }#1\end{minipage}}
\newcommand{\dist}{\operatorname{dist}}
\newcommand{\supp}{\operatorname{supp}}
\newcommand{\textd}{\text{\rm d}\mkern0.5mu}
\newcommand{\texti}{\text{\rm  i}\mkern0.7mu}
\newcommand{\texte}{\text{\rm e}}
\newcommand{\1}{\operatorname{\sf 1}\!}
\newcommand{\CC}{\mathcal C}
\newcommand{\FF}{\mathcal F}
\newcommand{\HH}{\mathcal H}
\newcommand{\NN}{\mathcal N}
\newcommand{\B}{\mathbb B}
\newcommand{\E}{\mathbb E}
\newcommand{\N}{\mathbb N}
\newcommand{\BbbP}{\mathbb P}
\newcommand{\R}{\mathbb R}
\newcommand{\T}{\mathbb T}
\newcommand{\Z}{\mathbb Z}
\newcommand{\twoeqref}[2]{(\ref{#1}--\ref{#2})}
\newcommand{\cc}{{\text{\rm c}}}
\def\myffrac#1#2 in #3{\raise 2.6pt\hbox{$#3 #1$}\mkern-1.5mu\raise 0.8pt\hbox{$#3/$}\mkern-1.1mu\lower 1.5pt\hbox{$#3 #2$}}
\newcommand{\ffrac}[2]{\mathchoice%
{\myffrac{#1}{#2} in \scriptstyle}
{\myffrac{#1}{#2} in \scriptstyle}
{\myffrac{#1}{#2} in \scriptscriptstyle}
{\myffrac{#1}{#2} in \scriptscriptstyle}
}
\newcommand{\hate}{\hat{\text{\rm e}}}
\newcommand{\ssup}[1] {{\scriptscriptstyle{({#1}})}}
\newcommand{\wt}{\widetilde}
\begin{document}

%\vglue-0.3cm

\title[Eigenvalue fluctuations\hfill\qquad]{Eigenvalue fluctuations for lattice Anderson Hamiltonians}
\author[\qquad \hfill Biskup, Fukushima, K\"onig]{Marek Biskup$^{1}$,\, Ryoki Fukushima$^{2}$\and\,\,Wolfgang K\"onig$^{3,4}$}

\thanks{\hglue-4.5mm\fontsize{9.6}{9.6}\selectfont\copyright\,2016 by M.~Biskup, R.~Fukushima and W.~K\"onig. Reproduction, by any means, of the entire article for non-commercial purposes is permitted without charge.
\vspace{2mm}
}

\maketitle

\vglue-2mm

\centerline{\textit{$^1$Department of Mathematics, UCLA, Los Angeles, California, USA}}
\centerline{\textit{$^2$Research Institute in Mathematical Sciences, Kyoto University, Kyoto, Japan}}
\centerline{\textit{$^3$Weierstra\ss-Institut f\"ur Angewandte Analysis und Stochastik, Berlin, Germany}}
\centerline{\textit{$^4$Institut f\"ur Mathematik, Technische Universit\"at Berlin, Berlin, Germany}}

\vglue3mm

%\centerline{\version}

\begin{abstract}
We study the statistics of Dirichlet eigenvalues of the random Schr\"odinger operator $-\epsilon^{-2}\Delta^{(\textd)}+\xi^{(\epsilon)}(x)$, with~$\Delta^{(\textd)}$ the  discrete Laplacian on~$\Z^d$ and $\xi^{(\epsilon)}(x)$ uniformly bounded independent random variables, on sets of the form $D_\epsilon:=\{x\in\Z^d\colon x\epsilon\in D\}$ for~$D\subset\R^d$ bounded, open and with a smooth boundary. If $\E\xi^{(\epsilon)}(x)=U(x\epsilon)$ holds for some bounded and continuous $U\colon D\to\R$, we show that, as $\epsilon\downarrow0$,
the $k$-th eigenvalue converges to the $k$-th Dirichlet eigenvalue of 
the homogenized operator $-\Delta+U(x)$, where~$\Delta$ is the continuum Dirichlet Laplacian on~$D$.
Assuming further that $\text{\rm Var}(\xi^{(\epsilon)}(x))=V(x\epsilon)$ for some positive and
continuous $V\colon D\to\R$, we establish a multivariate central limit
theorem for simple eigenvalues centered by their expectation. The limiting covariance for a given pair of simple eigenvalues is expressed as an integral
of~$V$ against the product of squares of the corresponding eigenfunctions of~$-\Delta+U(x)$.
\end{abstract}

\section{Introduction}
\vglue-2mm\subsection{The model and main results}
\noindent
The phenomenological description of physical processes such as heat or electric conductivity in materials is typically governed by differential equations with smoothly varying coefficients. However, due to an underlying crystalline structure as well as presence of impurities, the physical characteristics of materials change quite rapidly at the microscopic level. The apparent discrepancy in assumed regularity is reconciled mathematically by homogenization theory which provides tools to integrate out fine-scale oscillations and extract, in specific cases, a suitable continuum limit. A key point for modeling is to track how the microscopic details express into the values of material constants.

In this article we take up a study of one specific example of this approach. The general context is the spectral side of stochastic homogenization, which is currently a highly active research area. The quantities of our interest are low-lying eigenvalues of random Schr\"odinger operators called Anderson Hamiltonians. Such operators naturally appear in theories of disordered materials in solid state physics; indeed, they describe the motion of a single electron through a crystal with impurities. Our focus will be on the limiting statistics of these low-lying eigenvalues with the aim to capture both the leading-order behavior, which turns out to be deterministic by a Law of Large Numbers, as well as the leading-order random term, which turns out to be Gaussian by a Central Limit Theorem. Asymptotic expansions for eigenvalues of such operators are relevant for various natural questions of interest (e.g., decay of the heat kernel) as well as for numerical analysis of such systems.  Some additional motivation for our work will be described in Section~\ref{sec2}.

Let us move to precise definitions and results. Let~$D$ be a bounded open subset of~$\R^d$ whose boundary is~$C^{1,\alpha}$ for some~$\alpha>0$. Given an $\epsilon>0$, we define the discretized version of~$D$ as
\begin{equation}
\label{E:1.1}
D_\epsilon:=\bigl\{x\in\Z^d\colon \dist_\infty(x\epsilon,D^\cc)>\epsilon\bigr\}
\end{equation}
where $\dist_\infty$ is the $\ell^\infty$-distance in~$\R^d$ and~$D^\cc$ is the complement of~$D$.
For any numbers $\xi^{(\epsilon)}(x)$, $ x\in D_\epsilon$, define an operator (a matrix) $H_{D_\epsilon,\xi}$ acting on the linear space of functions $f\colon D_\epsilon\to\R$ that vanish outside~$D_\epsilon$ (i.e., the Dirichlet boundary condition is imposed) via
\begin{equation}
\label{E:1.2}
(H_{D_\epsilon,\xi}f)(x):=-\epsilon^{-2}(\Delta^{(\textd)} f)(x)+\xi^{(\epsilon)}(x)f(x),
\end{equation}
where $\Delta^{(\textd)}$ is the standard lattice Laplacian
\begin{equation}
%\label{}
(\Delta^{(\textd)} f)(x):=\sum_{y\colon |y-x|=1}\bigl[f(y)-f(x)\bigr]
\end{equation}
with $|x|$ denoting the $\ell^{1}$-norm of~$x$.
The operator $H_{D_\epsilon,\xi}$ is an example of the Anderson Hamiltonian. 
%that describes the motion of a single electron through a periodic crystal with impurities. 
Note that, by scaling the spatial coordinates by~$\epsilon$, one can equivalently regard $H_{D_\epsilon,\xi}$ as an operator on functions on $\epsilon D_\epsilon$. The kinetic term, $\epsilon^{-2}\Delta^{(\textd)}$, is a natural approximation of the continuous Laplacian on $D$.

The potential $\xi^{(\epsilon)}$ will be taken random with values at different vertices independent of each other. Although this means that $\xi^{(\epsilon)}$ will be quite rough in each specific realization, we will require, as is common in homogenization theory, that the probability laws of individual $\xi^{(\epsilon)}(x)$ vary continuously with the position. Namely, all results in this note will be based on the following assumptions:

\begin{assumption}
\label{ass1}
There are numbers $a,b\in\R$ with $a<b$ and bounded continuous functions $U\colon D\to\R$ and $V\colon D\to(0,\infty)$ such that the following holds for each~$\epsilon>0$:
\settowidth{\leftmargini}{(11)}
\begin{enumerate}
\item[(1)] the random variables $\{\xi^{(\epsilon)}(x)\colon x\in D_\epsilon\}$, are independent, 
\item[(2)] for any~$x\in D_\epsilon$,
\begin{equation}
\label{E-bounded}
a\le \xi^{(\epsilon)}(x)\le b,
%\supp\xi^{(\epsilon)}(x)\subset[a,b],\qquad x\in D_\epsilon,
\end{equation}
\item[(3)] for any~$x\in D_\epsilon$,
\begin{equation}
\label{E:1.4a}
\E\xi^{(\epsilon)}(x)=U(x\epsilon)\quad\text{and}\quad \text{\rm Var}\bigl(\xi^{(\epsilon)}(x)\bigr)=V(x\epsilon).
\end{equation}
\end{enumerate}
\end{assumption}

We will write $\BbbP_\epsilon$ to denote the law of $\xi^{(\epsilon)}$ but will not mark the $\epsilon$-dependence explicitly on expectation.
To ease our notations, we will also often omit marking the $\epsilon$-dependence of~$\xi$. The boundedness assumption \eqref{E-bounded} can be relaxed somewhat but we refrain from doing so in order to keep the paper focused on the phenomena we wish to describe. Also, most of our result apply even when the equalities \eqref{E:1.4a} just hold in the limit~$\epsilon\downarrow0$. 

\smallskip As already stated, our focus will be on the asymptotic behavior of the low-lying part of the spectrum of $H_{D_\epsilon,\xi}$ in the limit as~$\epsilon\downarrow0$. Here we note that, since $H_{D_\epsilon,\xi}$ is a symmetric $|D_\epsilon|\times|D_\epsilon|$-matrix, its eigenvalues are all real-valued and can be ordered as
\begin{equation}
%\label{}
\lambda^{\ssup 1}_{D_\epsilon,\xi}\le \lambda^{\ssup 2}_{D_\epsilon,\xi}\le\dots\le \lambda^{\ssup {|D_\epsilon|}}_{D_\epsilon,\xi}.
\end{equation}
As our first result we note that, in the limit~$\epsilon\downarrow0$, these converge to the eigenvalues of a suitable (homogenized) continuum operator:

\begin{theorem}
\label{thm1.1}
Under Assumption~\ref{ass1}, for each~$k\ge1$,
\begin{equation}
\label{E:1.4}
\lambda^{\ssup k}_{D_\epsilon,\xi}\,\underset{\epsilon\downarrow0}{\overset{}{\longrightarrow}}\,\lambda_D^{\ssup k} \qquad\text{\rm in probability},
\end{equation}
where $\lambda_D^{\ssup k}$ is the $k$-th smallest eigenvalue of the operator $-\Delta+U(x)$ on~$\cmss H^1_0(D)$, with~$\Delta$ {denoting} the continuum Laplacian. 
\end{theorem}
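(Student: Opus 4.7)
The plan is to apply the Courant-Fischer min-max principle to both $H_{D_\epsilon,\xi}$ and $-\Delta+U$ and deduce matching two-sided asymptotic bounds in probability. On functions supported in $D_\epsilon$ introduce the rescaled inner product $\langle f,g\rangle_\epsilon := \epsilon^d\sum_x f(x)g(x)$ and the rescaled Dirichlet form $\mathcal{E}_{\epsilon,\xi}(f) := \epsilon^{d-2}\sum_{\{x,y\}:x\sim y}(f(x)-f(y))^2 + \epsilon^d\sum_x\xi(x)f(x)^2$, so that $\langle\cdot,\cdot\rangle_\epsilon$ is a Riemann sum for $\int_D$. Then $\lambda^{\ssup k}_{D_\epsilon,\xi} = \min_{\dim V=k}\max_{f\in V\setminus\{0\}}\mathcal{E}_{\epsilon,\xi}(f)/\langle f,f\rangle_\epsilon$ and the analogous variational formula gives $\lambda_D^{\ssup k}$.

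For the upper bound, let $\phi_1,\dots,\phi_k$ be an $L^2(D)$-orthonormal family of eigenfunctions of $-\Delta+U$ (lying in $C^1(\overline D)$ by elliptic regularity on the $C^{1,\alpha}$-domain with continuous potential), and use $\phi_{j,\epsilon}(x):=\phi_j(\epsilon x)\1_{\{x\in D_\epsilon\}}$ as a $k$-dimensional test space. Riemann-sum approximations show that the Gram matrix $(\langle\phi_{i,\epsilon},\phi_{j,\epsilon}\rangle_\epsilon)$ tends to $I_k$ and that the deterministic part of the stiffness matrix tends to $\operatorname{diag}(\lambda_D^{\ssup1},\dots,\lambda_D^{\ssup k})$. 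Each random fluctuation entry $\epsilon^d\sum_x(\xi(x)-U(\epsilon x))\phi_i(\epsilon x)\phi_j(\epsilon x)$ has mean zero and variance $O(\epsilon^d)$ (using boundedness of $V$ and of the $\phi_j$'s), hence is $o_\BbbP(1)$ by Chebyshev. Solving the resulting $k\times k$ generalized eigenvalue problem yields $\lambda^{\ssup k}_{D_\epsilon,\xi}\le\lambda_D^{\ssup k}+o_\BbbP(1)$.

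For the lower bound, take $\langle\cdot,\cdot\rangle_\epsilon$-orthonormal discrete eigenfunctions $\psi^{\ssup1}_\epsilon,\dots,\psi^{\ssup k}_\epsilon$ for $\lambda^{\ssup1}_{D_\epsilon,\xi},\dots,\lambda^{\ssup k}_{D_\epsilon,\xi}$ and interpolate them to piecewise-linear functions $\widetilde\psi^{\ssup j}_\epsilon$ on the Kuhn triangulation of $\epsilon\Z^d$. A direct computation gives the exact identity $\int|\nabla\widetilde\psi^{\ssup j}_\epsilon|^2 = \epsilon^{d-2}\sum_{x\sim y}(\psi^{\ssup j}_\epsilon(x)-\psi^{\ssup j}_\epsilon(y))^2$, together with $\|\widetilde\psi^{\ssup j}_\epsilon\|_{L^2}^2 = 1+o(1)$, and the one-lattice-step boundary buffer in \eqref{E:1.1} places $\widetilde\psi^{\ssup j}_\epsilon$ in $H^1_0(D)$. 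Continuity of $U$ and a standard finite-element quadrature estimate give $\int_D U(\widetilde\psi^{\ssup j}_\epsilon)^2 = \epsilon^d\sum_x U(\epsilon x)\psi^{\ssup j}_\epsilon(x)^2+o(1)$. The random contribution $\epsilon^d\sum_x(\xi(x)-U(\epsilon x))\psi^{\ssup j}_\epsilon(x)^2$ has variance $O(\epsilon^d)\|\widetilde\psi^{\ssup j}_\epsilon\|_{L^4}^4$, and since $\widetilde\psi^{\ssup j}_\epsilon$ is uniformly bounded in $H^1$ (from the upper bound applied to $\mathcal{E}_{\epsilon,\xi}(\psi^{\ssup j}_\epsilon)\le\lambda^{\ssup k}_{D_\epsilon,\xi}=O_\BbbP(1)$), the $L^4$ norm is controlled by discrete Gagliardo-Nirenberg. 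The continuum min-max applied to $\operatorname{span}(\widetilde\psi^{\ssup j}_\epsilon)$ then delivers $\lambda_D^{\ssup k}\le\lambda^{\ssup k}_{D_\epsilon,\xi}+o_\BbbP(1)$.

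The main obstacle is uniform probabilistic control of the quadratic form $f\mapsto\epsilon^d\sum_x(\xi(x)-U(\epsilon x))f(x)^2$ over unit vectors $f$ in the \emph{random} $k$-dimensional span of the $\psi^{\ssup j}_\epsilon$, since these eigenfunctions depend on $\xi$; working entrywise on the $k\times k$ matrix of this form in the orthonormal basis $(\psi^{\ssup j}_\epsilon)$ reduces the problem to the $k^2$ pair-terms, each of which is handled by the $L^4$ bound above plus a union bound. Matching of Dirichlet boundary conditions under interpolation is a secondary technicality handled by the definition of $D_\epsilon$ and the $C^{1,\alpha}$-smoothness of $\partial D$; in dimensions $d\ge 5$, where direct Sobolev only yields $L^{2d/(d-2)}\subsetneq L^4$, the required $L^4$ bound is recovered by a truncation-and-Hoeffding argument using the uniform $\ell^\infty$-bound on $\xi$ from \eqref{E-bounded}.
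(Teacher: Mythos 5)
Your upper bound is sound and matches the spirit of the paper's Proposition~\ref{lemma-3.4}: push the continuum eigenfunctions down to $D_\epsilon$, Gram--Schmidt, and kill the random perturbation of the stiffness matrix by a Chebyshev bound (since the test functions are deterministic and bounded, the variance is $O(\epsilon^d)$). The paper phrases things in terms of Ky Fan sums $\Lambda_k^\epsilon$ rather than individual min--max, but that is only a convenience for handling degeneracy and is not a gap.

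The lower bound is where your argument has a genuine hole. After interpolating the discrete eigenfunctions and invoking the continuum variational principle, you are left with the random pairing
\begin{equation}
\epsilon^d\sum_{x\in D_\epsilon}\bigl(\xi(x)-U(\epsilon x)\bigr)\,\psi^{\ssup j}_\epsilon(x)^2,
\end{equation}
where $\psi^{\ssup j}_\epsilon$ is an eigenfunction of $H_{D_\epsilon,\xi}$ and hence a nontrivial function of the entire configuration $\xi$. Your "variance is $O(\epsilon^d)\|\widetilde\psi\|_{L^4}^4$, then Chebyshev" step is exactly the computation that is \emph{not} available here: with $\psi^{\ssup j}_\epsilon$ depending on $\xi$, the summand $(\xi(x)-U(\epsilon x))\psi^{\ssup j}_\epsilon(x)^2$ is neither mean-zero nor independent over $x$, so the product formula for the variance of a sum of independent terms does not apply, and the whole expression can in principle be negative of order $O(1)$. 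You flag this as "the main obstacle," but the proposed fix (going entrywise in the $k\times k$ basis $(\psi^{\ssup j}_\epsilon)$ and taking a union bound over $k^2$ pair terms) does not remove the dependence: each pair term is still a $\xi$-measurable function coupled to $\xi$, and there are only $k^2$ of them, so no net/union-bound mechanism is triggered. Likewise, the $d\ge5$ "truncation-and-Hoeffding" remark would only help if the test functions were deterministic.

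The ingredient your proposal is missing is the coarse-graining step used in the paper's Proposition~\ref{lemma3.5}. There one replaces $(g^{\ssup i}_{D_\epsilon,\xi})^2$ by its piecewise-constant block average $((g^{\ssup i})^2)_L$ on boxes of side $L$, at a controllable cost $O(L\epsilon)$ coming from the discrete gradient bound $\|\nabla^{(\textd)}(g^2)\|_1\lesssim\|g\|_2\|\nabla^{(\textd)}g\|_2$ (Lemma~\ref{lemma-3.4new} and \eqref{E:3.47}). The point is that the block-averaged function is bounded in $\ell^\infty$ uniformly in $\xi$, so the pairing against the block averages of $\xi-U(\epsilon\cdot)$ is controlled deterministically on the event $F_{L,\epsilon}$ of \eqref{E:3.55}, and \emph{that event depends on $\xi$ only through block averages}, not through the eigenfunctions. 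One then finishes with a Hoeffding/large-deviation bound for $\BbbP_\epsilon(F_{L,\epsilon}^\cc)$ and a choice $L\sim\delta/\epsilon$. This decoupling — bound the random pairing uniformly over all admissible eigenfunctions on a $\xi$-measurable event that does not see the eigenfunctions, then show that event is overwhelmingly likely — is the key idea absent from your write-up, and without it the lower bound as you have stated it does not close.

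Two smaller remarks: the paper does not rely on Sobolev or $L^4$ estimates for the eigenfunctions at all in this proof; a uniform $\ell^\infty$-bound $\|g^{\ssup k}_{D_\epsilon,\xi}\|_\infty\le c\,\epsilon^{d/2}$ is obtained directly by a Feynman--Kac argument (Lemma~\ref{prop4}), which short-circuits the dimension-dependent Sobolev bookkeeping you worry about. And your piecewise-linear interpolant on the Kuhn triangulation is essentially identical to the construction in the paper's Lemma~\ref{fe}, whose item (5) is the exact Dirichlet-energy identity you quote.
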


{Here, as} usual, $\cmss H^1_0(D)$ denotes the closure of the set of infinitely differentiable and compactly supported functions in~$D$ with respect to the norm $\Vert f\Vert_{\cmss H^1(D)}:=(\Vert f\Vert_{L^2(D)}^2+\Vert\nabla f\Vert_{L^2(D)}^2)^{1/2}$. Thanks to our conditions on~$D$ and~$U$, the spectrum of~$-\Delta+U(x)$ is discrete with no eigenvalue more than finitely degenerate. Moreover, any orthonormal basis of eigenfunctions~$\varphi_D^{\ssup k}$ consists of functions that are continuously differentiable on~$\overline D$. See Lemma~\ref{lemma-regularity} for details.

Statements of the form \eqref{E:1.4} have been proved in various contexts before; see, e.g., the monograph of Jikov, Kozlov and Oleinik~\cite{ZKO} and further discussion in Section~\ref{sec2}. However, concerning the eigenvalues of Anderson Hamiltonians, we have found only one homogenization result due to Bal~\cite{Bal08}, which is moreover restricted to~$d\le 3$. See Section~\ref{Bal} below for more details.

\smallskip
The formula \eqref{E:1.4} gives the leading-order deterministic behavior of the spectrum of~$H_{D_\epsilon,\xi}$. Naturally, one might be interested in the subleading terms or even a full asymptotic expansion in powers of~$\epsilon$. Some of the terms in this expansion are likely to be deterministic --- e.g., those describing the boundary effects --- while others could genuinely be random. The leading order {random} term captures the fluctuations of the eigenvalues around their mean. To understand the typical scale of such fluctuations, we note the following concentration estimate:

\begin{theorem}
\label{thm-1.2a}
 Under Assumption~\ref{ass1}, for each~$k\ge1$, there is $c>0$ such that for all~$t>0$ and all~$\epsilon\in(0,1)$,
\begin{equation}
\label{E:1.6}
\BbbP_\epsilon\Bigl(\bigl|\lambda^{\ssup
k}_{D_\epsilon,\xi}-\E\lambda^{\ssup
k}_{D_\epsilon,\xi}\bigr|>t\Bigr)\le 4\texte^{-c\,t^2\epsilon^{-d}}.
\end{equation}
\end{theorem}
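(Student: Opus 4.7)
The plan is to apply McDiarmid's bounded differences inequality to $\lambda^{\ssup k}_{D_\epsilon,\xi}$ viewed as a function of the independent coordinates $\{\xi^{(\epsilon)}(x)\colon x\in D_\epsilon\}$. The key point is that the naive Weyl estimate $|\lambda^{\ssup k}_{D_\epsilon,\xi}-\lambda^{\ssup k}_{D_\epsilon,\xi'}|\le \|\xi-\xi'\|_\infty$ only yields a per-coordinate Lipschitz constant of order $b-a$; combined with $|D_\epsilon|\sim \epsilon^{-d}$ this gives only the useless estimate $\exp(-ct^2\epsilon^{+d})$. To obtain the exponent $\epsilon^{-d}$ of the claim, the per-coordinate Lipschitz constant must be refined to order $\epsilon^d$, which rests on a uniform $\ell^\infty$-bound of order $\epsilon^{d/2}$ on the relevant eigenfunctions.

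First I would prove the refined Lipschitz estimate. Suppose $\xi,\xi'$ agree outside a single site $x_0$ and set $\delta := \xi(x_0)-\xi'(x_0)$. Taking $V_k(\xi)=\text{span}\{\varphi^{\ssup 1}_\xi,\ldots,\varphi^{\ssup k}_\xi\}$, where the $\varphi^{\ssup j}_\xi$ are $\ell^2$-orthonormal eigenfunctions of $H_{D_\epsilon,\xi}$, as a test subspace in the min--max characterization of $\lambda^{\ssup k}_{\xi'}$,
$$\lambda^{\ssup k}_{\xi'}-\lambda^{\ssup k}_\xi \;\le\; \max_{f\in V_k(\xi),\,\|f\|_2=1}\langle f,(H_{\xi'}-H_\xi)f\rangle \;\le\; |\delta|\sum_{j=1}^k|\varphi^{\ssup j}_\xi(x_0)|^2,$$
and the symmetric bound holds with $\xi,\xi'$ interchanged. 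Thus single-coordinate changes of $\lambda^{\ssup k}$ are controlled by the pointwise mass at $x_0$ of the first $k$ eigenfunctions.

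The main step is the uniform eigenfunction bound: there is $C_k=C_k(a,b,D)$ such that for every admissible $\xi$ and every $1\le j\le k$,
$$\|\varphi^{\ssup j}_{D_\epsilon,\xi}\|_\infty \;\le\; C_k\,\epsilon^{d/2}.$$
I would establish this by combining (i) a uniform a priori upper bound $\lambda^{\ssup j}_{D_\epsilon,\xi}\le \Lambda_k$, obtained via the min--max with test functions drawn from the first $k$ eigenfunctions of $-\Delta$ on $D$ together with $\xi\le b$; and (ii) the Feynman--Kac kernel domination $e^{-tH_{D_\epsilon,\xi}}(x,y)\le e^{-ta}P^{(\epsilon)}_t(x,y)$, where $P^{(\epsilon)}_t$ is the transition kernel of the continuous-time random walk generated by $\epsilon^{-2}\Delta^{(\textd)}$ killed outside $D_\epsilon$. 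The discrete local central limit theorem gives $P^{(\epsilon)}_2(x,x)\le C\epsilon^d$ uniformly in $x$ and $\epsilon\in(0,1)$, hence $\|e^{-H_{D_\epsilon,\xi}}\|_{\ell^2\to\ell^\infty}^2 = \sup_x e^{-2H_{D_\epsilon,\xi}}(x,x)\le C'\epsilon^d$, and then $\|\varphi^{\ssup j}_\xi\|_\infty \le e^{\lambda^{\ssup j}_\xi}\|e^{-H_{D_\epsilon,\xi}}\|_{\ell^2\to\ell^\infty}\le C_k\epsilon^{d/2}$.

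Combining the two steps, a single-coordinate change of $\xi(x_0)$ alters $\lambda^{\ssup k}_{D_\epsilon,\xi}$ by at most $kC_k^2(b-a)\epsilon^d$. Independence of the $\xi^{(\epsilon)}(x)$ together with $|D_\epsilon|\le C'\epsilon^{-d}$ and McDiarmid's inequality then give
$$\BbbP_\epsilon\bigl(|\lambda^{\ssup k}_{D_\epsilon,\xi}-\E\lambda^{\ssup k}_{D_\epsilon,\xi}|>t\bigr) \;\le\; 2\exp\!\Bigl(-\frac{2t^2}{|D_\epsilon|\,k^2C_k^4(b-a)^2\epsilon^{2d}}\Bigr)\;\le\; 2\exp(-ct^2\epsilon^{-d}),$$
with $c>0$ depending on $k,a,b,D,U$ but not on $\epsilon$ or $t$; the extra factor $4$ in the claim absorbs this $2$. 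The hard part is the uniform eigenfunction bound: without $\epsilon^{d/2}$-scale $\ell^\infty$ control uniform over the bounded random potential, McDiarmid produces only $\exp(-ct^2\epsilon^{+d})$, which is vacuous for small $\epsilon$. Everything else is essentially variational book-keeping.
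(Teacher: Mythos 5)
Your proof is correct, but it takes a genuinely different route from the paper's, though both hinge on the same key input: the uniform $\ell^\infty$-bound $\|g^{\ssup j}_{D_\epsilon,\xi}\|_\infty\lesssim\epsilon^{d/2}$, which you derive via Feynman--Kac and the local CLT exactly as in the paper's Lemma~\ref{prop4}. The paper's proof applies Talagrand's concentration inequality for \emph{concave} Lipschitz functions (Theorem~\ref{thm-Talagrand}) to the cumulative sum $\Lambda_k^\epsilon(\xi)=\sum_{i\le k}\lambda^{\ssup i}_{D_\epsilon,\xi}$, whose concavity follows from the Ky Fan variational principle \eqref{E:3.22}; the Euclidean Lipschitz constant of $\Lambda_k^\epsilon$ is then $O(\epsilon^{d/2})$ by the same eigenfunction bound, and the statement for a single $\lambda^{\ssup k}$ is obtained by telescoping $\lambda^{\ssup k}=\Lambda_k^\epsilon-\Lambda_{k-1}^\epsilon$. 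You instead establish a per-coordinate bounded-differences estimate of order $\epsilon^d$ \emph{directly} for $\lambda^{\ssup k}$ (valid even under degeneracy, since the min--max test subspace can be any choice of the first $k$ eigenvectors and Cauchy--Schwarz absorbs that choice), and then invoke McDiarmid, avoiding the detour through $\Lambda_k^\epsilon$ and concavity altogether. This is essentially the martingale/Azuma route the paper itself flags as an alternative in the remark following its proof. Your route is a bit more elementary and even gives the sharper constant $2$ in place of $4$; the paper's Talagrand route would become advantageous if one only had an $\ell^2$-type Lipschitz bound without uniform per-coordinate control, but here, with the $\ell^\infty$ eigenfunction bound available, the two strategies are of equal strength.
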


If~$c(k)$ marks the largest~$c$ for which \eqref{E:1.6} holds, our proof gives $c(k)\gtrsim k^{-2}\texte^{-2\lambda_D^{\ssup k}}$. However, this is probably quite far from optimal. Still, thanks to \eqref{E:1.6} the random variables
\begin{equation} 
\frac{\lambda^{\ssup k}_{D_\epsilon,\xi}-\E\lambda^{\ssup k}_{D_\epsilon,\xi}}{\epsilon^{d/2}}
\end{equation}
are tight in the limit~$\epsilon\downarrow0$ and, in fact, have uniform Gaussian tails. This suggests a possible Gaussian limit theorem. And indeed, as our next and also main result shows, a Central Limit Theorem (CLT) holds and that so jointly for the collection of all eigenvalues that are simple in the limit $\epsilon\downarrow0$:

\begin{theorem}
\label{thm1.2}
Suppose Assumption~\ref{ass1} holds, fix $n\in\N$ and let $k_1,\dots,k_n\in\N$ be distinct indices such that the Dirichlet eigenvalues $\lambda_D^{\ssup{k_1}},\dots,\lambda_D^{\ssup{k_n}}$ of $-\Delta+U(x)$ on~$D$ are simple. Then, in the limit as $\epsilon\downarrow0$, the law of the random vector
\begin{equation}
%\label{}
\Biggl(\frac{\lambda_{D_\epsilon,\xi}^{\ssup{k_1}}-\E \lambda_{D_\epsilon,\xi}^{\ssup{k_1}}}{\epsilon^{d/2}},\dots,\frac{\lambda_{D_\epsilon,\xi}^{\ssup{k_n}}-\E \lambda_{D_\epsilon,\xi}^{\ssup{k_n}}}{\epsilon^{d/2}}\Biggr)
\end{equation}
tends weakly to a multivariate normal with mean zero and covariance matrix $\sigma_D^2=\{\sigma_{ij}^2\}_{i,j=1}^n$ that is given by
\begin{equation}
\label{cov}
\sigma^2_{ij}:=\int_D\bigl|\varphi_D^{\ssup{k_i}}(x)\bigr|^2\bigl|\varphi_D^{\ssup{k_j}}(x)\bigr|^2\,V(x)\,\textd x,
\end{equation}
{where} $\varphi_D^{\ssup i}$ denotes the $i$-th normalized eigenfunction of~$-\Delta+U(x)$ and~$V(x)$ is as in Assumption~\ref{ass1}(3).
\end{theorem}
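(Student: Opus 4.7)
My plan is to reduce the statement to a multivariate central limit theorem for a triangular array of sums of independent random variables, via first-order perturbation theory applied to the zero-mean part of the potential. Write $\xi^{(\epsilon)}(x)=U(x\epsilon)+W(x)$ with $W(x):=\xi^{(\epsilon)}(x)-U(x\epsilon)$; by Assumption~\ref{ass1}, the family $\{W(x)\colon x\in D_\epsilon\}$ consists of independent, uniformly bounded, centred random variables with $\text{\rm Var}\,W(x)=V(x\epsilon)$. Let $H^{0}_{D_\epsilon}:=-\epsilon^{-2}\Delta^{(\textd)}+U(\cdot\epsilon)$ denote the deterministic reference Hamiltonian and write $\mu^{(k)}_\epsilon$ and $\psi^{(k)}_\epsilon$ for its ordered eigenvalues and $\ell^2(D_\epsilon)$-normalized eigenfunctions. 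A first preparatory step is to upgrade Theorem~\ref{thm1.1}, applied to the deterministic potential $U(\cdot\epsilon)$, to an eigenfunction statement: $\mu^{(k)}_\epsilon\to\lambda_D^{(k)}$ for every $k$, and whenever $\lambda_D^{(k)}$ is simple the rescaled eigenfunction $x\mapsto \epsilon^{-d/2}\psi^{(k)}_\epsilon(\lfloor x/\epsilon\rfloor)$ converges to $\varphi_D^{(k)}$ in $L^2(D)$. Discrete elliptic regularity together with the boundary smoothness of $D$ and the boundedness of $U$ then upgrade this to $L^p(D)$-convergence for every $p<\infty$ and to a uniform bound $\|\psi^{(k)}_\epsilon\|_\infty\le C_k\epsilon^{d/2}$.

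Next, for each distinguished index $k\in\{k_1,\dots,k_n\}$ I would establish the first-order perturbation identity
\begin{equation*}
\lambda^{(k)}_{D_\epsilon,\xi}-\mu^{(k)}_\epsilon=\sum_{x\in D_\epsilon}W(x)\bigl|\psi^{(k)}_\epsilon(x)\bigr|^2+R^{(k)}_\epsilon
\end{equation*}
with a quantitatively controlled remainder. The upper bound follows from the min--max principle applied to the trial subspace $\text{span}\{\psi^{(1)}_\epsilon,\dots,\psi^{(k)}_\epsilon\}$, while the matching lower bound is obtained by a resolvent expansion of $H_{D_\epsilon,\xi}$ around the spectral projection onto $\psi^{(k)}_\epsilon$, using the uniform spectral gap $\gamma_k>0$ inherited from Step~1. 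On the event (of $\BbbP_\epsilon$-probability tending to one by Theorem~\ref{thm-1.2a}) that the first $k$ eigenvalues shift by much less than $\gamma_k$, one obtains $|R^{(k)}_\epsilon|\le C\gamma_k^{-1}\|P_k^\perp W\psi^{(k)}_\epsilon\|_{\ell^2}^2$, where $P_k^\perp$ is the orthogonal projection off $\psi^{(k)}_\epsilon$. Its expectation is bounded by $\sum_x V(x\epsilon)\bigl|\psi^{(k)}_\epsilon(x)\bigr|^2=O(\epsilon^d)$; hence $R^{(k)}_\epsilon/\epsilon^{d/2}\to 0$ in probability, and a routine truncation on the Gaussian-tail event from Theorem~\ref{thm-1.2a} also yields $\E R^{(k)}_\epsilon=O(\epsilon^d)$. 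Consequently, to leading order,
\begin{equation*}
\frac{\lambda^{(k)}_{D_\epsilon,\xi}-\E\lambda^{(k)}_{D_\epsilon,\xi}}{\epsilon^{d/2}}=\epsilon^{-d/2}\sum_{x\in D_\epsilon}W(x)\bigl|\psi^{(k)}_\epsilon(x)\bigr|^2+o(1),
\end{equation*}
with the error term vanishing in probability.

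The right-hand side, taken jointly over $k\in\{k_1,\dots,k_n\}$, is an $\R^n$-valued sum of independent, centred summands of uniform size $O(\epsilon^{d/2})$ (from the $\ell^\infty$ bound on $\psi^{(k)}_\epsilon$ and the boundedness of $W$), whose covariance entry $(i,j)$ equals
\begin{equation*}
\epsilon^{-d}\sum_{x\in D_\epsilon}V(x\epsilon)\bigl|\psi^{(k_i)}_\epsilon(x)\bigr|^2\bigl|\psi^{(k_j)}_\epsilon(x)\bigr|^2.
\end{equation*}
By the $L^4(D)$-convergence of the rescaled $\psi^{(k_i)}_\epsilon$ together with a Riemann-sum approximation, this quantity converges to $\sigma_{ij}^2$. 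The Lindeberg condition is trivially verified since individual summands are of order $\epsilon^{d/2}$ while the total variance is of order one, and the classical multivariate Lindeberg--Feller theorem (or the Cram\'er--Wold device combined with its one-dimensional version) delivers the claimed Gaussian limit. The main technical obstacle lies in Step~2: because $\|W\|_\infty=O(1)$, the naive Kato--Rellich perturbation series does not converge in operator norm, and the control of $R^{(k)}_\epsilon$ must exploit the \emph{stochastic} smallness of the off-diagonal matrix elements $\langle\psi^{(j)}_\epsilon,W\psi^{(k)}_\epsilon\rangle$ -- which are sums of roughly $\epsilon^{-d}$ independent bounded mean-zero variables with weights $\psi^{(j)}_\epsilon\psi^{(k)}_\epsilon$ and hence of typical size $\epsilon^{d/2}$ -- together with the uniform spectral gap established in Step~1, rather than any deterministic norm bound on $W$.
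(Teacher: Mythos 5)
Your decomposition $\xi=U+W$ and reduction to a weighted sum of independent variables via first-order perturbation around the deterministic reference $H^0_{D_\epsilon}$ is a genuinely different route from the paper's. Unfortunately Step~2 has a substantive gap. You assert $|R^{(k)}_\epsilon|\le C\gamma_k^{-1}\|P_k^\perp W\psi^{(k)}_\epsilon\|_{\ell^2}^2$ with $\E\|P_k^\perp W\psi^{(k)}_\epsilon\|_{\ell^2}^2\le\sum_x V(x\epsilon)|\psi^{(k)}_\epsilon(x)|^2=O(\epsilon^d)$; but that sum is $O(1)$, not $O(\epsilon^d)$, since $\sum_x|\psi^{(k)}_\epsilon(x)|^2=1$ with your $\ell^2(D_\epsilon)$ normalization and $V$ is bounded (you likely confused it with the first-order variance $\sum_x V(x\epsilon)|\psi^{(k)}_\epsilon(x)|^4=O(\epsilon^d)$). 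The stated bound therefore only gives $R^{(k)}_\epsilon=O(1)$, which dwarfs $\epsilon^{d/2}$. Even the sharper second-order Rayleigh--Schr\"odinger expression $\sum_{j\ne k}|\langle\psi^{(j)}_\epsilon,W\psi^{(k)}_\epsilon\rangle|^2/(\mu^{(k)}_\epsilon-\mu^{(j)}_\epsilon)$ has mean $\sum_x V(x\epsilon)|\psi^{(k)}_\epsilon(x)|^2\,G^{(k)}(x,x)$ involving the diagonal of the reduced Green function $G^{(k)}$, which tends to zero but only slowly (this is exactly what Lemma~\ref{unif-g} establishes, and no rate as fast as $\epsilon^d$ is available there); a Weyl-law count suggests this second-order term becomes comparable to or larger than $\epsilon^{d/2}$ already for $d\ge4$, and Remark~\ref{rem-1.7} flags precisely this dimension-dependent behavior of the corrector. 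Neither $\E R^{(k)}_\epsilon$ nor, more to the point, $R^{(k)}_\epsilon-\E R^{(k)}_\epsilon$ is shown to be $o(\epsilon^{d/2})$; you acknowledge the obstacle in your closing paragraph, but the quantitative control you state does not resolve it.

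The paper avoids the remainder problem by never expanding around a fixed reference eigenfunction. It writes $\lambda^{\ssup k}_{D_\epsilon,\xi}-\E\lambda^{\ssup k}_{D_\epsilon,\xi}$ as a martingale in the site-revealing filtration, uses the exact derivative identity $\partial\lambda^{\ssup k}/\partial\xi(x)=|g^{\ssup k}_{D_\epsilon,\xi}(x)|^2$ (Lemma~\ref{lemma-5.1}) to express each martingale increment, and then only needs to control the \emph{relative} change of $|g^{\ssup k}(x)|^2$ as the single coordinate $\xi(x)$ sweeps $[a,b]$, which by the rank-one formula of Lemma~\ref{prop3} is $\exp\bigl\{\int G^{\ssup k}_{D_\epsilon}(x,x;\tilde\xi)\,\textd\tilde\xi\bigr\}$. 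There, $o(1)$ smallness of $\int G^{\ssup k}$ suffices because it enters as a multiplicative correction to a quantity already of size $\epsilon^d$; no global bound on the eigenvector corrector is ever needed, and the argument is automatically uniform in the dimension. If you want to pursue your perturbative route, you would at minimum need a dimension-uniform bound showing $\text{\rm Var}(R^{(k)}_\epsilon)=o(\epsilon^d)$, which given the above appears delicate for $d\ge4$ and is not supplied.
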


Results of this kind are only few and far in-between. One context where such a limit law has been claimed is the \emph{crushed ice problem}; see Section~\ref{sec2.1} for further discussion and references. Understanding the crushed-ice problem has in fact been a prime motivation for this work. 
We note that the aforementioned paper~\cite{Bal08} also contains a Gaussian fluctuation result but again only for $d\le 3$; see Section~\ref{Bal}.
\begin{remark}

This remark concerns the restriction of Theorem~\ref{thm1.2} to simple eigenvalues. 
It is clear that some restriction is needed whenever the expectations of two eigenvalues fall within $o(\epsilon^{d/2})$ of each other. Although, by Theorem~\ref{thm-1.2a}, the fluctuations of individual eigenvalues perhaps remain CLT-like, under degeneracy they \emph{decide} the order and hence no Gaussian limit is possible. The precise ordering also depends on their expectations and so further control of subleading terms in \eqref{E:1.4} would be required in order to make a meaningful conclusion in the end. (Of course, alternative formulations may still be possible --- e.g., in terms of the Green operator or spectral density --- but our present proofs would not apply anyway.)
\end{remark}

\subsection{Key underlying idea}
From the perspective of the theory of random Schr\"odinger operators it is interesting to ponder about where the {principal contribution to the} fluctuations of the eigenvalues {comes} from. Our method of proof indicates this quite clearly. Let $g_{D_{\epsilon},\xi}^{\ssup k}$ henceforth denote any eigenfunction of~$H_{D_\epsilon,\xi}$ for the eigenvalue~$\lambda_{D_\epsilon,\xi}^{\ssup k}$ normalized so that
\begin{equation}
\label{E:3.1}
\sum_{x\in D_\epsilon}\bigl|g_{D_{\epsilon},\xi}^{\ssup k}(x)\bigr|^2=1.
\end{equation}
Let $C^{\ssup k}_\epsilon$ denote the event that $\lambda^{\ssup k}_{D_\epsilon,\xi}$ is non-degenerate and note that, by \eqref{E:1.4}, $\BbbP_\epsilon(C^{\ssup k}_\epsilon)\to1$ as~$\epsilon\downarrow0$ for any~$k$ such that the Dirichlet eigenvalue~$\lambda^{\ssup k}_D$ of $-\Delta+U(x)$ is non-degenerate, i.e., simple. On~$C^{\ssup k}_\epsilon$, write
\begin{equation}
%\label{}
T_{D_\epsilon,\xi}^{\ssup k}:=\sum_{x\in\Z^d}\epsilon^{-2}\bigl|\nabla^{(\textd)}g_{D_\epsilon,\xi}^{\ssup k}(x)\bigr|^2,
\end{equation}
to denote the \emph{kinetic energy} associated with the $k$-th
eigenspace of $H_{D_\epsilon,\xi}$, {where} $\nabla^{(\textd)}f(x)$ is the
vector whose~$i$-th component is $f(x+\hate_i)-f(x)$, for~$\hate_i$
denoting the~$i$-th unit vector in~$\R^d$. We
regard~$g_{D_{\epsilon},\xi}^{\ssup k}$ as extended by zero to all
of~$\Z^d$. By testing the eigen-equation by the eigenfunction and using the summation by parts, we get the following expression of the eigenvalue: 
\begin{equation}
\lambda_{D_\epsilon,\xi}^{\ssup k}=T_{D_\epsilon,\xi}^{\ssup k}+\sum_{x\in D_\epsilon}\xi(x)g_{D_\epsilon,\xi}^{\ssup k}(x)^2.
\end{equation}
The following theorem implies that the main fluctuation comes from the \emph{potential energy} part, i.e., the second term on the right hand side.

\begin{theorem}
\label{thm1.3}
Suppose Assumption~\ref{ass1} holds and that $\lambda^{\ssup k}_D$
 is simple. Then,
\begin{equation}
\label{E:1.10}
\epsilon^{-d}\text{\rm Var}\bigl(\,T_{D_\epsilon,\xi}^{\ssup k}\,\big|\,C_\epsilon^{\ssup k}\,\bigr)\,\underset{\epsilon\downarrow0}\longrightarrow\,0
\end{equation}
and
\begin{equation}
\label{E:1.11}
\epsilon^{-d}\sum_{x\in D_\epsilon}\text{\rm Var}\bigl(\,g_{D_\epsilon,\xi}^{\ssup k}(x)^2\,\big|\,C_\epsilon^{\ssup k}\,\bigr)\,\underset{\epsilon\downarrow0}\longrightarrow\,0.
\end{equation}
\end{theorem}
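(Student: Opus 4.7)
\smallskip\noindent\emph{Proof plan.}
The argument will be built around the identity
\[T_{D_\epsilon,\xi}^{\ssup k}=\lambda_{D_\epsilon,\xi}^{\ssup k}-\sum_{x\in D_\epsilon}\xi(x)\,g_{D_\epsilon,\xi}^{\ssup k}(x)^2\]
combined with first--order spectral perturbation theory. The guiding heuristic is that both the eigenvalue $\lambda_{D_\epsilon,\xi}^{\ssup k}$ and the potential-energy sum $\sum_x\xi(x)g_{D_\epsilon,\xi}^{\ssup k}(x)^2$ fluctuate at scale~$\epsilon^{d/2}$ but with \emph{identical} leading random contributions, so their difference $T^{\ssup k}$ fluctuates at a strictly smaller scale.

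First, I would set up the tools. Let $\mathcal E_\epsilon\subseteq C^{\ssup k}_\epsilon$ denote the event on which the spectral gap $\delta_k(\epsilon,\xi):=\min_{j\ne k}|\lambda^{\ssup k}_{D_\epsilon,\xi}-\lambda^{\ssup j}_{D_\epsilon,\xi}|$ is bounded below by a fixed positive constant; by Theorem~\ref{thm1.1} and simplicity of~$\lambda_D^{\ssup k}$, $\BbbP_\epsilon(\mathcal E_\epsilon)\to 1$, and the complementary event is to be handled by crude $L^\infty$--bounds. On~$\mathcal E_\epsilon$, Hellmann--Feynman gives $\partial\lambda^{\ssup k}/\partial\xi(y)=g^{\ssup k}(y)^2$, while first--order eigenfunction perturbation gives $\partial_{\xi(y)}g^{\ssup k}(x)=g^{\ssup k}(y)R^{\ssup k}(x,y)$, with $R^{\ssup k}:=\sum_{j\ne k}(\lambda^{\ssup k}-\lambda^{\ssup j})^{-1}\,g^{\ssup j}\otimes g^{\ssup j}$ the reduced resolvent satisfying $\|R^{\ssup k}\|\le\delta_k^{-1}$. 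Using the min--max principle together with Theorem~\ref{thm1.1} and simplicity of~$\lambda_D^{\ssup k}$, the rescaled eigenfunctions $\epsilon^{-d/2}g^{\ssup k}_{D_\epsilon,\xi}$, viewed as piecewise--constant functions on~$D$, converge to~$\varphi_D^{\ssup k}$ in $\cmss H^1_0(D)$ in probability, with uniform $L^\infty(D)$--control coming from elliptic regularity for $-\Delta+U$; in particular $\epsilon^{-d}g^{\ssup k}(x)^2\to|\varphi_D^{\ssup k}(x\epsilon)|^2$ uniformly in $x\in D_\epsilon$ and in probability, and $\sum_{x\in D_\epsilon}|\nabla^{(\textd)}g^{\ssup k}(x)|^2=O(\epsilon^2)$.

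The second claim \eqref{E:1.11} then follows from a dominated--convergence argument applied pointwise: the uniform bound $g^{\ssup k}(x)^2\le C\epsilon^d$ combined with pointwise convergence in probability of $\epsilon^{-d}g^{\ssup k}(x)^2$ to a deterministic limit yields $\text{\rm Var}(g^{\ssup k}(x)^2\mid C^{\ssup k}_\epsilon)=o(\epsilon^{2d})$ uniformly in $x$; summing over the $|D_\epsilon|=O(\epsilon^{-d})$ vertices gives the required $o(\epsilon^d)$. For \eqref{E:1.10} I would apply Efron--Stein conditional on~$\mathcal E_\epsilon$,
\[\text{\rm Var}\bigl(T^{\ssup k}\,\big|\,\mathcal E_\epsilon\bigr)\le\tfrac12\sum_{y\in D_\epsilon}\E\Bigl[\bigl(T^{\ssup k}(\xi)-T^{\ssup k}(\xi^y)\bigr)^2\,\Big|\,\mathcal E_\epsilon\Bigr],\]
and exploit the cancellation built into $T=\lambda-\sum\xi g^2$: by Hellmann--Feynman and $\sum_xg^{\ssup k}(x)^2=1$,
\[\frac{\partial T^{\ssup k}}{\partial\xi(y)}=g^{\ssup k}(y)^2-g^{\ssup k}(y)^2-\sum_x\xi(x)\frac{\partial g^{\ssup k}(x)^2}{\partial\xi(y)}=-\sum_x\bigl[\xi(x)-\lambda^{\ssup k}_{D_\epsilon,\xi}\bigr]\frac{\partial g^{\ssup k}(x)^2}{\partial\xi(y)}.\]
Substituting the eigen--equation $(\xi-\lambda^{\ssup k})g^{\ssup k}=\epsilon^{-2}\Delta^{(\textd)}g^{\ssup k}$ and the first-order formula for $\partial g^{\ssup k}/\partial\xi(y)$ rewrites this sensitivity as a bilinear form in $\nabla^{(\textd)}g^{\ssup k}$ and $R^{\ssup k}$, which combined with the smallness of $\|\nabla^{(\textd)}g^{\ssup k}\|_2$ and Efron--Stein delivers $\text{\rm Var}(T^{\ssup k}\mid\mathcal E_\epsilon)=o(\epsilon^d)$.

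I expect the main obstacle to be quantifying this cancellation sharply enough. The naive Efron--Stein estimate $|T^{\ssup k}(\xi)-T^{\ssup k}(\xi^y)|\le Cg^{\ssup k}(y)^2\lesssim\epsilon^d$ yields only $\text{\rm Var}(T^{\ssup k})=O(\epsilon^d)$, merely matching the scale Theorem~\ref{thm-1.2a} delivers for~$\lambda^{\ssup k}$ itself. To reach the required $o(\epsilon^d)$ one really has to use the eigen--equation to express the $\xi(y)$--derivative of~$T^{\ssup k}$ as a quadratic form in the discrete gradient~$\nabla^{(\textd)}g^{\ssup k}$, whose smallness Step~2 is built to deliver, and to control the reduced resolvent~$R^{\ssup k}$ quantitatively throughout. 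A secondary technical point is to transfer the Efron--Stein bound from conditioning on $\mathcal E_\epsilon$ to conditioning on $C^{\ssup k}_\epsilon$, which requires showing that $\BbbP_\epsilon(C^{\ssup k}_\epsilon\setminus\mathcal E_\epsilon)$ is small enough that the corresponding contribution to the variance is $o(\epsilon^d)$.
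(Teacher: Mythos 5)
Your plan for \eqref{E:1.11} is essentially sound once the over-strong pointwise claim is weakened: the paper does \emph{not} establish uniform pointwise convergence of $\epsilon^{-d}g^{\ssup k}(x)^2$ to $|\varphi_D^{\ssup k}(x\epsilon)|^2$, only convergence in an $L^2$-averaged sense via the continuum interpolation $\wt g^{\,\epsilon}_{k,\xi}$ (Corollary~\ref{cor3.8} and Lemma~\ref{lemma-5.6}). Uniform pointwise control would need Harnack/De Giorgi--Nash--Moser machinery that the paper never invokes. Fortunately you do not need it: the $L^1$-in-$(x,\omega)$ statement
\[
\sum_{m}\int_{B_\epsilon(x_m)}\!\textd y\,
\E\Bigl|\,|\varphi_D^{\ssup k}(y)|^2-\epsilon^{-d}|g^{\ssup k}_{D_\epsilon,\xi}(x_m)|^2\1_{A_{k,\epsilon}}\Bigr|\to0
\]
from Lemma~\ref{lemma-5.6}, together with the a priori bound $g^{\ssup k}(x)^2\le C\epsilon^d$ (Lemma~\ref{prop4}), already gives $\epsilon^{-d}\sum_x\text{\rm Var}(g^{\ssup k}(x)^2\,|\,C_\epsilon^{\ssup k})=o(1)$, since the variance is bounded by $C\epsilon^d$ times the conditional first moment of $|g^{\ssup k}(x)^2-\epsilon^d|\varphi_D^{\ssup k}(x\epsilon)|^2|$. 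This is exactly what the paper does.

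For \eqref{E:1.10} there is a genuine gap in the step you flag yourself as the main obstacle, and it does not resolve the way you hope. You correctly compute $\partial_{\xi(y)}T^{\ssup k}=-2g^{\ssup k}(y)\bigl(R^{\ssup k}(\xi-\lambda^{\ssup k})g^{\ssup k}\bigr)(y)$, and you propose to gain an $\epsilon$-power by substituting $(\xi-\lambda^{\ssup k})g^{\ssup k}=\epsilon^{-2}\Delta^{(\textd)}g^{\ssup k}$ and using the smallness of $\|\nabla^{(\textd)}g^{\ssup k}\|_2=O(\epsilon)$. But the $\epsilon^{-2}$ in front exactly cancels the two gradient factors: $\|\epsilon^{-2}\Delta^{(\textd)}g^{\ssup k}\|_2=\|(\xi-\lambda^{\ssup k})g^{\ssup k}\|_2=O(1)$, and consequently $\|R^{\ssup k}(\xi-\lambda^{\ssup k})g^{\ssup k}\|_2=O(1)$, not $o(1)$. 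Indeed, expanding in the eigenbasis, $\|R^{\ssup k}(\xi-\lambda^{\ssup k})g^{\ssup k}\|_2^2\ge|\langle g^{\ssup{k+1}},(\xi-\lambda^{\ssup k})g^{\ssup k}\rangle|^2/(\lambda^{\ssup{k+1}}-\lambda^{\ssup k})^2$, and as $\epsilon\downarrow0$ the numerator converges to $|\langle\varphi_D^{\ssup{k+1}},\Delta\varphi_D^{\ssup k}\rangle_{L^2(D)}|^2=|\langle\nabla\varphi_D^{\ssup{k+1}},\nabla\varphi_D^{\ssup k}\rangle_{L^2(D)}|^2$, which is generically bounded away from zero. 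So Efron--Stein on $T^{\ssup k}$ delivers only $\text{\rm Var}(T^{\ssup k})=O(\epsilon^d)$, merely matching the eigenvalue scale; there is no deterministic $\epsilon$-power gain in the one-variable sensitivities.

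The reduction from $O(\epsilon^d)$ to $o(\epsilon^d)$ is a \emph{probabilistic averaging} effect, not a pointwise one, and the paper's proof accesses it through the martingale machinery already built for Theorem~\ref{thm1.2}. Write $T^{\ssup k}=\lambda^{\ssup k}-\sum_x\xi(x)|g^{\ssup k}(x)|^2$, subtract expectations, and express the $\lambda^{\ssup k}$ fluctuation as the martingale $\sum_m Z_m$. Replacing $|g^{\ssup k}(x)|^2$ by $\epsilon^d|\varphi_D^{\ssup k}(x\epsilon)|^2$ via Lemma~\ref{lemma-5.6} (which is exactly your \eqref{E:1.11}) leaves
\[
\epsilon^{-d/2}\bigl(T^{\ssup k}-\E(T^{\ssup k}\1_{A_{k,\epsilon}})\bigr)
=o(1)+\sum_{m}\Bigl(\epsilon^{-d}Z_m-\bigl(\xi(x_m)-U(\epsilon x_m)\bigr)\bigl|\varphi_D^{\ssup k}(\epsilon x_m)\bigr|^2\Bigr),
\]
and the remaining sum is itself a martingale whose increments are small in $L^2$ by \eqref{E:5.33}. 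It is precisely the statement that the martingale increments $Z_m$ of $\lambda^{\ssup k}$ agree, to within $o(\epsilon^d)$ in $L^2$, with the one-site potential-energy contribution $\epsilon^d(\xi(x_m)-U(\epsilon x_m))|\varphi_D^{\ssup k}(\epsilon x_m)|^2$ that drives both Theorem~\ref{thm1.2} and Theorem~\ref{thm1.3}. Your Efron--Stein route cannot access this, because it controls only the size of each one-variable perturbation of $T^{\ssup k}$, not the cancellation between the eigenvalue and potential-energy martingale increments.
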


Based on \eqref{E:1.10}, the exact form of the covariance is easy to explain as well: just replace  $g_{D_\epsilon,\xi}^{\ssup k}(x)$ by the eigenfunction 
$\varphi_D^{\ssup k}$ of the limiting operator $-\Delta+U$ and note that the potential energy thus becomes a weighted  sum of i.i.d.\ random variables for which the central limit 
theorem with covariance~\eqref{cov} is well-known.

It turns out that an \emph{a priori} knowledge of {\twoeqref{E:1.10}{E:1.11}} is nearly enough to justify the central limit theorem in Theorem~\ref{thm1.2}. Indeed, let  $\E^{\ssup k}$ denote the conditional expectation given~$C_\epsilon^{\ssup k}$ and let us, for ease of notation, drop the subindices on $\lambda^{\ssup k}_{D_\epsilon,\xi}$, $T_{D_\epsilon,\xi}^{\ssup k}$ and $g_{D_\epsilon,\xi}^{\ssup k}$. On~$C_\epsilon^{\ssup k}$ we have
\begin{equation}
%\label{}
\lambda^{\ssup k}-
\E^{\ssup k} \lambda^{\ssup k}=T^{\ssup k}-\E^{\ssup k} T^{\ssup k}
+\sum_{x\in D_\epsilon}\Bigl(\xi(x)g^{\ssup k}(x)^2-\E^{\ssup k}\bigl(\xi(x)g^{\ssup k}(x)^2\bigr)\Bigr).
\end{equation}
The sum on the right can be recast as
\begin{multline}
%\label{}
\qquad
\sum_{x\in D_\epsilon}\bigl[\xi(x)-\E^{\ssup k}\xi(x)\bigr]\E^{\ssup k}\bigl(g^{\ssup k}(x)^2\bigr)+\sum_{x\in D_\epsilon}\xi(x)\bigl[g^{\ssup k}(x)^2-\E^{\ssup k}(g^{\ssup k}(x)^2)\bigr]
\\
+\sum_{x\in D_\epsilon}\E^{\ssup k}\Bigl(\bigl(\xi(x)-\E^{\ssup k}\xi(x)\bigr)\bigl(g^{\ssup k}(x)^2-\E^{\ssup k}(g^{\ssup k}(x)^2)\bigr)\Bigr).
\qquad
\end{multline}
A routine use of the Cauchy-Schwarz inequality shows that the second moment of the latter two sums is dominated by (powers of) the sum in \eqref{E:1.11}. Using also \eqref{E:1.10} we get
\begin{equation}
%\label{}
\lambda^{\ssup k}-
\E^{\ssup k} \lambda^{\ssup k}=o({\epsilon^{d/2}})+\sum_{x\in D_\epsilon}\bigl[\xi(x)-\E^{\ssup k}\xi(x)\bigr]\E^{\ssup k}\bigl(g^{\ssup k}(x)^2\bigr),
\end{equation}
{where $o(\epsilon^{d/2})$ represents a random variable whose 
variance is $o(\epsilon^d)$.}
Under the assumption that the $k$-th eigenvalue of~$-\Delta+U(x)$ is non-degenerate, the complement of $C_\epsilon^{\ssup k}$ can be covered by events from \eqref{E:1.6} for indices~$k-1$, $k$ and~$k+1$. This permits us to replace the conditional expectations of $\lambda^{\ssup k}$ and~$\xi(x)$ by unconditional ones. To get the multivariate CLT stated in Theorem~\ref{thm1.2}, it then suffices to show
\begin{equation}
%\label{}
{\epsilon^{-d}}\E^{\ssup k}\bigl(g^{\ssup k}(\lfloor \cdot/\epsilon\rfloor)^2\bigr)
\,\underset{\epsilon\downarrow0}\longrightarrow\,\bigl|\varphi_D^{\ssup k}(\cdot)\bigr|^2
\end{equation}
in $L^2(D,\textd x)$, for any~$k$ of interest.  As we will see, our proof of Theorems~\ref{thm1.2} and~\ref{thm1.3} is indeed strongly based on controlling the convergence of the discrete eigenfunctions to the continuous ones in proper $L^p$-norms. 

\begin{remark}
\label{rem-1.7}
As is common in homogenization theory, analyzing differential equations with rapidly varying coefficients typically requires separating the rapid oscillations into, or compensating for them by, a ``corrector'' term. The reader may thus be surprised to find that no such term needs to be introduced in our case. This is because this term is naturally of a smaller order in~$\epsilon$, and thus will not contribute to the fluctuations of the eigenvalues.

We can elucidate this further by invoking rank-one perturbation and \eqref{E:1.4}; see Proposition~\ref{prop3} and Lemma~\ref{unif-g}. Define~$\Psi^{\ssup k}$ by the equation
\begin{equation}
%\label{}
\epsilon^{-d/2}g_{D_\epsilon,\xi}^{\ssup k}(x)=\varphi_{D}^{\ssup k}(x\epsilon)+\epsilon^2\Psi^{\ssup k}(x).
\end{equation}
Invoking the eigenvalue equations, we then have
\begin{equation}
\begin{aligned}
\Delta^{(\textd)}\Psi^{\ssup k}(x)&=\epsilon^{-2-d/2}\Delta^{(\textd)}g_{D_\epsilon,\xi}^{\ssup k}(x)-\epsilon^{-2}\Delta^{(\textd)}\varphi_{D}^{\ssup k}(\cdot\,\epsilon)(x)
\\
&\approx \bigl(\lambda_{D_\epsilon,\xi}^{\ssup k}-\xi(x)\bigr)\epsilon^{-d/2}g_{D_\epsilon,\xi}^{\ssup k}(x)
-\bigl(\lambda_D^{\ssup k}-U(x\epsilon)\bigr)\varphi_{D}^{\ssup k}(x\epsilon),
\end{aligned}
\end{equation}
where we approximated the discrete Laplacian by its continuous counterpart. Assuming that $\epsilon^{-d/2}g_{D_\epsilon,\xi}^{\ssup k}(x)$ is in fact pointwise close to $\varphi_D^{\ssup k}(x\epsilon)$, we get
\begin{equation}
%\label{}
-\Delta^{(\textd)}\Psi^{\ssup k}(x)= \bigl(\xi(x)-U(x\epsilon)+o(1)\bigr)\varphi_{D}^{\ssup k}(x\epsilon),
\end{equation}
i.e., $\Psi^{\ssup k}$ solves a corrector-like Poisson equation. Since the Dirichlet Laplacian on~$D_\epsilon$ is invertible,~$\Psi^{\ssup k}$ can in principle be computed and studied. Dropping the $o(1)$-term suggests that $\Psi^{\ssup k}$ has finite variance in~$d\ge5$.
\end{remark}

\subsection{Outline}
The remainder of this paper is organized as follows: In the next section, we review some earlier work related to the present article. In Section~\ref{sec3} we establish Theorem~\ref{thm1.1} along with some useful regularity estimates on discrete and continuous eigenfunctions. In Section~\ref{sec4} we prove Theorem~\ref{thm-1.2a} dealing with concentration of the law of discrete eigenvalues. Then, in Section~\ref{sec5}, we proceed to prove our main result (Theorem~\ref{thm1.2}). Theorem~\ref{thm1.3} is then derived readily as well.

\section{Related work}
\label{sec2}\noindent
Before we delve into the proofs, let us make some connections to the existing literature. These have insofar been suppressed in order to keep the presentation focused. 

\subsection{Homogenization approach}
\label{Bal}\noindent
As alluded to earlier, a result closely related to ours has been derived by Bal~\cite{Bal08}. There the operator of the form $\mathcal{H}_{\epsilon,q}=-\Delta+{q(x/\epsilon)}$ in $D\subset\R^d$ with Dirichlet boundary condition is studied, where~$q$ is a random centered stationary field. Note that this can naturally be regarded as a spatially scaled version of our model. (Bal in fact studied the more general situation where $\Delta$ is replaced by a pseudo differential operator.) In dimensions $d\le 3$ and under the assumptions that
\begin{itemize}
 \item either $q$ is bounded and has an integrable correlation function, or 
 \item $\E[q(0)^6]<\infty$ and a mixing condition holds ([H2] on page 683 of~\cite{Bal08}), 
\end{itemize}
it is proved in Section 5.2 that the $k$-th smallest eigenvalue $\lambda_{\epsilon,q}^{\ssup k}$ of $\mathcal{H}_{\epsilon,q}$ has Gaussian fluctuations around $\lambda^{\ssup k}_D$ with $U\equiv 0$, provided this eigenvalue is simple. This is slightly different from our result, which shows a CLT around the expectation. In the case $d\le 3$, we \emph{a posteriori} know that $\E[\lambda_{\epsilon,q}^{\ssup k}]-\lambda^{\ssup k}_D=o(\epsilon^{d/2})$ by combining the result of Bal with ours, but we do not know how to prove this directly. 

The argument in~\cite{Bal08} is based on a perturbation expansion of the resolvent operator and an explicit representation of the leading-order \emph{local} correction to the eigenfunctions; cf.\ Remark~\ref{rem-1.7}. In order to control the remainder terms, one then needs that the Green function of the homogenized operator is square integrable, and this requires the restriction to $d\le 3$.
The method employed in the present article is different in it avoids having to deal with local perturbations altogether. Incidentally, as was recently shown by Gu and Mourrat~\cite{Gu-Mourrat}, for the random elliptic operators (see Subsection~\ref{RCM} below for a formulation) the limit laws of the local and global fluctuations to eigenfunctions are in fact not even the same.

\subsection{Crushed-ice problem}
\label{sec2.1}\noindent
Our attention to fluctuations of Dirichlet eigenvalues arose from our interest in the so called \emph{crushed ice} problem. This is a problem in the continuum where one considers a bounded open set~$D\subset\R^d$ with~$m$ Euclidean balls $B(x_1,\epsilon),\dots,B(x_m,\epsilon)$ of radius~$\epsilon$ removed from its interior. The positions $x_1,\dots,x_m$ of the centers of these balls are drawn independently from a common distribution $\rho(x)\,\textd x$ on~$D$. The principal question is how the eigenvalues of the Laplacian in
\begin{equation}
%\label{}
D_\epsilon:=D\smallsetminus(B(x_1,\epsilon)\cup\dots\cup B(x_m,\epsilon))
\end{equation}
behave in the limit as~$\epsilon\downarrow0$, for interesting choices of $m=m(\epsilon)\to\infty$. (The most natural boundary conditions are Neumann on~$\partial D$ and Dirichlet on $\partial B(x_i,\epsilon)$ but all mixtures of these can be considered.) 
To make the connection to our problem, note that one can view the negative Laplacian on~$D_\epsilon$ as the operator $-\Delta+\xi(x)$ on~$D$ with~$\xi(x)$ vanishing on~$D_\epsilon$ and $\xi(x)=\infty$ for~$x\in D\smallsetminus D_\epsilon$.

Since its introduction by Kac in 1974, much effort went into analyzing the crushed ice problem in various regimes of dependence of~$m$ on~$\epsilon$. The main references include Kac~\cite{Kac}, Huruslov and Marchenko~\cite{HM}, Rauch and Taylor~\cite{RT}; see also the monographs by Simon~\cite{Simon} and Sznitman~\cite{Sznitman}. More recently, extensions to non-homogeneous kinetic terms have also been considered, e.g., by Douanla~\cite{Douanla} and Ben-Ari~\cite{Ben-Ari}. The most interesting limit is obtained when
\begin{equation}
%\label{}
m(\epsilon)\,\text{Cap}\bigl(B(0,\epsilon)\bigr)\,\underset{\epsilon\downarrow0}\longrightarrow\,\mu\in(0,\infty),
\end{equation}
where $\text{Cap}(A)$ denotes the Newtonian capacity of~$A$
when $d\ge 3$ and the capacity for the operator~$-\Delta+1$ when
$d=2$. 
The $k$-th Dirichlet eigenvalue of~$-\Delta$ in~$D_\epsilon$
then tends to that of the Schr\"odinger operator~$-\Delta+\mu\rho(x)$
on~$D$. Note the appearance of a non-trivial ``potential'' $\mu\rho(x)$ despite the fact that the total volume occupied by the~$m$ balls vanishes in the stated limit.

The problem of fluctuations was in this context taken up by Figari, Orlandi and Teta~\cite{FOT} and later by Ozawa~\cite{Ozawa}. Both of these studies infer a (single-variate) Central Limit Theorem assuming simplicity of the limiting eigenvalue but they are confined to the case of~$d=3$. In addition, the proofs are very functional-analytic, as in~\cite{Bal08}, and (at least as claimed by Ozawa) they do not readily generalize to other dimensions. Ozawa himself calls for a probabilistic version of~his~result.

We believe that our approach {to} eigenvalue
fluctuations is exactly the kind called for by Ozawa. In particular, we
expect that several key steps underlying our proof of
Theorem~\ref{thm1.2} extend to the crushed-ice problem in all
dimensions. Notwithstanding, as the situation of independent and bounded potentials on a lattice is considerably simpler, we decided to start with that case first. Moreover, lattice Anderson Hamiltonians are well studied objects and so results for them are of interest in their own right. (See Subsection~\ref{sec2.3} for some more comments.)

\subsection{Random elliptic operators}\label{RCM}
In homogenization theory, the leading order of the eigenvalues of various random elliptic operators, whether in divergence form or not, has been studied quite thoroughly; see again the book by Jikov, Kozlov and Oleinik~\cite{ZKO}. An example of such operator (in divergence form) is the (scaled) random Laplacian
\begin{equation}
%\label{}
\cmss L^{(\epsilon)} f(x):=\frac12\,\epsilon^{-2}\!\sum_{y\colon|x-y|=1}c_{xy}\bigl[f(y)-f(x)\bigr]
\end{equation}
where $\{c_{xy}\colon x,y\in\Z^d,  |x-y|=1\}$ is a family of non-negative conductances with $c_{xy}=c_{yx}$.

We can naturally study the same question for the operator~$-\cmss L^{(\epsilon)}$ as we did for the Anderson Hamiltonian~\eqref{E:1.2}. Indeed, let~$\lambda_{D_\epsilon}^{\ssup k}$ denote the $k$-th eigenvalue of $-\cmss L^{(\epsilon)}$ on the linear space of functions that vanish outside the set~$D_\epsilon$ defined in \eqref{E:1.1}. Under the assumption that $(c_{xy})_{x\sim y}$ is ergodic with respect to spatial shift and uniformly elliptic in the sense that
\begin{equation}
%\label{}
\exists\, a,b\in(0,\infty),\,\,a<b\colon\qquad c_{xy}\in[a,b]{\qquad \text{almost surely,}}
\end{equation}
the eigenvalue $\lambda_{D_\epsilon}^{\ssup k}$ converges (in probability) to the $k$-th smallest eigenvalue of~$-\cmss Q$ on~$D$, where~$\cmss Q$ is the elliptic second-order differential operator
\begin{equation}
%\label{}
\cmss Q f(x):=\sum_{i,j=1}^d q_{ij}\frac{\partial^2 f}{\partial x_i\partial x_j}(x)
\end{equation}
with Dirichlet boundary conditions on~$\partial D$ and $(q_{ij})_{i,j}$ denoting a positive-definite  symmetric (constant) matrix. 

To the best of our knowledge, the fluctuations of $\lambda_{D_\epsilon}^{\ssup k}$ for independent and identically distributed conductances have not been studied yet. Notwithstanding, the analysis of a related effective conductance problem (Nolen~\cite{Nolen}, Rossignol~\cite{Rossignol}, Biskup, Salvi and Wolff~\cite{BSW}) indicates that $\epsilon^{-d/2}[\lambda_{D_\epsilon}^{\ssup k}-\E\lambda_{D_\epsilon}^{\ssup k}]$ should be asymptotically normal with mean zero and variance that is a biquadratic expression in~$\nabla\varphi^{\ssup k}_D$ integrated over~$D$, where $\varphi^{\ssup k}_D$ denotes a~$k$-th eigenfunction of the operator~$\cmss Q$. A significant additional technical challenge of this problem is the need to employ the corrector method (this is what gives rise to the ``homogenized'' coefficients $q_{ij}$ above).

\subsection{Anderson localization} 
\label{sec2.3}\noindent
Our discussion of the background would not be complete without making at least some connection to the problem of Anderson localization. The name goes back to the seminal (physics) 1958 article by Anderson~\cite{Anderson} who noted that metals may turn from conductors to insulators when impurities are inserted to the crystalline structure at sufficient density. Mathematically, the insulator phase refers to the situation when the infinite-volume version of the operator \eqref{E:1.2} with $\epsilon:=1$  exhibits a band of localized eigenvalues. (This is what is referred to as Anderson localization.) The conductor phase indicates the existence of a band of continuous spectrum. 

Through tremendous effort by mathematicians over the last four decades, Anderson localization has now been at least partially understood. Instead of trying to summarize the vast literature, we refer the reader to the monographs of Pastur and Figotin~\cite{Pastur-Figotin}, Stollmann~\cite{Stollmann}, Carmona and Lacroix~\cite{Carmona-Lacroix} and the notes by Hundertmark~\cite{Hundertmark}. The upshot is that one-dimensional models exhibit only localized states while all models exhibit localized states near ``spectral edge.'' The delocalized phase remains a complete mystery, being so far successfully tackled only in the case of tree graph models (cf.~the upcoming book by Aizenman and Warzel~\cite{Aizenman-Warzel}).

Another way to look at Anderson localization is by analyzing the limiting spectral statistics for operators in an increasing sequence of finite volumes. In the localized regime, the statistics is expected to be given by a Poisson point process. This has so far been proved in the ``bulk'' (i.e., the interior) of the spectrum {(Molchanov~
\cite{Molchanov} in~$d=1$ and Minami~\cite{Minami} for general $d\ge1$)}. At spectral edges there seem to be only partial results for bounded potentials at this time (Germinet and Klopp~\cite{GK1,GK2}) although a somewhat more complete theory has been developed for some unbounded potentials (Astrauskas~\cite{A1,A2}, Biskup and K\"onig~\cite{BK}).
In the delocalization regime, the spectral statistics is expected to be that seen in random matrix ensembles.

Having noted all these facts, we rush to add that the main point of our article is to describe the situation of a very weak disorder, which one can see by multiplying $H_{D_\epsilon,\xi}$ by~$\epsilon^2$. The effective strength of the random potential, and consequently also the effect of Anderson localization, vanishes in the limit~$\epsilon\downarrow0$. Notwithstanding, as for the crushed-ice problem, a residual term coming from smooth spatial variations of the mean (expressed by the function~$U$) prevails and the eigenvalues are asymptotically those of a non-trivial continuum Schr\"odinger operator.

\nopagebreak
\section{Convergence to continuum model}
\nopagebreak\label{sec3}\noindent
We are now in a position to start the expositions of the proofs. Our first task will be to prove Theorem~\ref{thm1.1} dealing with the leading-order convergence of the random eigenvalues to those of the continuum problem.  Let us begin by fixing some notation.

\subsection{Notations}
We will henceforth assume that~$D$ is a bounded open set in $\R^d$ with $C^{1,\alpha}$-boundary for some $\alpha>0$ and that Assumption~\ref{ass1} holds. We write
\begin{equation}
%\label{}
\Omega_{a,b}:=[a,b]^{\Z^d},
\end{equation}
{for a set that} supports~$\BbbP_\epsilon$ for every~$\epsilon>0$. Recalling the notation $g_{D_\epsilon,\xi}^{\ssup k}$ for the~$k$-th eigenvector of $H_{D_\epsilon,\xi}$ normalized as in~\eqref{E:3.1}, we similarly write $\varphi_D^{\ssup k}$ for an eigenfunction of $-\Delta+ U(x)$ corresponding to~$\lambda^{\ssup k}_D$ normalized so that $\int_D|\varphi_D^{\ssup k}(x)|^2\textd x=1$. These eigenfunctions are unique up to a sign as soon as the corresponding eigenvalue is non-degenerate.

We will write $\Vert f\Vert_{p}$ for the canonical $\ell^p$-norm of~$\R$- or~$\R^d$-valued functions $f$ on~$\Z^d$. When~$p=2$, we use $\langle f,h\rangle$ to denote the associated inner product in $\ell^2(\Z^d)$. All functions defined \emph{a priori} only on~$D_\epsilon$ will be regarded as extended by zero to~$\Z^d\smallsetminus D_\epsilon$.
In order to control convergence to the continuum problem, it will sometimes be convenient to work with the scaled $\ell^p$-norm,
\begin{equation}
%\label{}
\Vert f\Vert_{\epsilon,p}:=\biggl(\epsilon^d\sum_{x\in \Z^d}|f(x)|^p\biggr)^{\ffrac1p}.
\end{equation}
{This} implies, e.g., that
\begin{equation}
%\label{}
\Vert \epsilon^{-d/2}g_{D_{\epsilon},\xi}^{\ssup k}\Vert_{\epsilon,2}=1.
\end{equation}
We will sometimes use $\langle f,g\rangle_{\epsilon,2}$ to denote the inner product associated with~$\Vert\cdot\Vert_{\epsilon,2}$. For functions~$f,g$ of a continuum variable, we write the norms as~$\Vert f\Vert_{L^p(\R^d)}$ and the inner product in~$L^2(\R^d)$ as $\langle f,g\rangle_{L^2(\R^d)}$.

\subsection{Regularity bounds}
Our starting point are some regularity estimates on both the continuum and discrete eigenvalues and eigenfunctions. Note that, in our earlier convention,~$\lambda_{D_\epsilon,0}^{\ssup k}$ corresponds to the $k$-th eigenvalue of $-\epsilon^{-2}\Delta^{(\textd)}$ with Dirichlet boundary conditions on~$D_\epsilon^\cc$. Recall that $C^{1,\alpha}(A)$ denotes the set of functions that are continuously differentiable on the interior of~$A$ with a uniform estimate on~$\alpha$-H\"older norm of the gradient.

\begin{lemma}
\label{lemma-regularity}
For all $k\ge1$
\begin{equation}
\label{E:3.4aa}
\sup_{0<\epsilon<1}\,\sup_{\xi\in\Omega_{a,b}}\,\bigl|\lambda_{D_\epsilon,\xi}^{\ssup k}-\lambda_{D_\epsilon,0}^{\ssup k}\bigr|\le \max\{|b|,|a|\}.
\end{equation}
Similarly, both $-\Delta$ and $-\Delta+U(x)$ have compact resolvent on~$\cmss H^1_0(D)$ and their spectrum thus consists of isolated, finitely degenerate eigenvalues. Moreover, if $\lambda_{D,0}^{\ssup k}$ denotes the $k$-th eigenvalue of $-\Delta$ on~$\cmss H^1_0(D)$, then
\begin{equation}
\label{E:3.5aa}
\bigl|\lambda_{D}^{\ssup k}-\lambda_{{D},0}^{\ssup k}\bigr|\le \Vert U\Vert_\infty.
\end{equation}
In addition, any eigenfunction $\varphi^{\ssup k}_D$ of $-\Delta+U(x)$ obeys
\begin{equation}
\label{E:3.6aa}
\varphi^{\ssup k}_D\in C^{1,\alpha}(\overline D).
\end{equation}
\end{lemma}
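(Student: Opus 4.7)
My plan is to treat the four assertions separately using standard variational and elliptic-regularity tools, and I expect the only non-automatic piece to be the boundary regularity in~\eqref{E:3.6aa}.

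For the two eigenvalue comparisons \eqref{E:3.4aa} and \eqref{E:3.5aa}, the approach is the min-max (Courant--Fischer) characterization
\begin{equation*}
\lambda^{\ssup k}(H)\,=\,\inf_{\dim V=k}\,\sup_{f\in V,\ \|f\|=1}\langle f,Hf\rangle,
\end{equation*}
where the infimum runs over $k$-dimensional subspaces of the form domain. Since $\xi(x)\in[a,b]$, the quadratic form $\langle f,\xi f\rangle$ is pinched between $a\|f\|^2$ and $b\|f\|^2$; similarly $|\langle f,Uf\rangle_{L^2(D)}|\le\|U\|_\infty\|f\|^2$ for every $f\in \cmss H^1_0(D)$. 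Feeding either inequality into the min-max formula with the same trial subspaces used for the unperturbed operator yields both bounds immediately (noting that $\max\{|a|,|b|\}$ handles both signs of $\xi$).

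For the compactness of the resolvent of $-\Delta$ on $\cmss H^1_0(D)$ I would invoke the Rellich--Kondrachov theorem, which gives the compact embedding $\cmss H^1_0(D)\hookrightarrow L^2(D)$ for bounded $D$. Since $U$ is a bounded multiplication operator, the form domain of $-\Delta+U$ coincides with that of $-\Delta$ and its resolvent is a compact perturbation of that of $-\Delta$, hence still compact. The spectral theorem for self-adjoint operators with compact resolvent then produces a discrete spectrum consisting of isolated eigenvalues of finite multiplicity, together with a complete orthonormal basis of $L^2(D)$-eigenfunctions.

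The only substantive part is~\eqref{E:3.6aa}. I would recast the eigenequation as the Poisson problem $-\Delta\varphi_D^{\ssup k}=(\lambda_D^{\ssup k}-U)\varphi_D^{\ssup k}$ and bootstrap. Starting from $\varphi_D^{\ssup k}\in\cmss H^1_0(D)\subset L^2(D)$, and using that the coefficient $\lambda_D^{\ssup k}-U$ is bounded, interior $W^{2,p}$ estimates (Calder\'on--Zygmund) together with Sobolev embedding successively upgrade $\varphi_D^{\ssup k}$ to $L^p_{\mathrm{loc}}$ for every $p<\infty$, then to $W^{2,p}_{\mathrm{loc}}$, and finally by Morrey to $C^{1,\beta}_{\mathrm{loc}}(D)$ for every $\beta<1$. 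To promote this to $\overline D$ I would flatten the boundary locally by a $C^{1,\alpha}$-diffeomorphism, which is available because $\partial D\in C^{1,\alpha}$, and apply the global $W^{2,p}$ theory for the Dirichlet problem (Gilbarg--Trudinger, Theorem~9.13) followed by Morrey embedding; the exponent~$\alpha$ of the conclusion is inherited from the regularity of~$\partial D$. The main obstacle is precisely this boundary step, as the interior bootstrap is essentially routine; once $\partial D$ is flattened to a hyperplane and the transformed equation is still of the same type with continuous coefficients, the argument parallels the interior case.
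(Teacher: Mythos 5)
Your treatment of the eigenvalue bounds \eqref{E:3.4aa}--\eqref{E:3.5aa} via the min-max principle matches the paper, as does the compactness argument (Rellich--Kondrachov plus the observation that a bounded potential does not change the form domain); these parts are fine. The interior bootstrap to $C^{1,\beta}_{\mathrm{loc}}(D)$ is also standard and correct.

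The gap is in the boundary step. You propose to flatten $\partial D$ by a $C^{1,\alpha}$-diffeomorphism $\Phi$ and then invoke the $W^{2,p}$ theory for the Dirichlet problem (GT Theorem~9.13). But after such a flattening the equation for $v=\varphi_D^{\ssup k}\circ\Phi$ takes \emph{divergence} form $-D_i(A^{ij}D_jv)=\tilde g$ with $A=J^{-1}(J^{-1})^{\mathrm T}|\det J|$, $J=D\Phi$, and $A^{ij}$ only of class $C^{0,\alpha}$. The $L^p$-theory of GT~Chapter~9 is for non-divergence operators $a^{ij}D_{ij}+b^iD_i+c$; rewriting the flattened equation in that form would require $D_iA^{ij}$, which is not available when $A$ is merely H\"older continuous. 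Equivalently, applying Theorem~9.13 directly in $D$ is not possible either, since that theorem requires a $C^{1,1}$ boundary portion and here $\partial D$ is only assumed $C^{1,\alpha}$. So the specific chain $W^{2,p}$-estimate $\Rightarrow$ Morrey does not close at the boundary with the regularity assumed in the lemma.

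The correct tool, and the one the paper cites, is the global Schauder-type estimate for weak solutions of divergence-form equations with $C^{1,\alpha}$ boundary: Corollary~8.36 in Gilbarg--Trudinger. That result gives $u\in C^{1,\alpha}(\overline D)$ for $u\in\cmss H^1_0(D)$ weakly solving $-\Delta u= g$ with $g\in L^q$, $q>n$, precisely under the $C^{1,\alpha}$ boundary hypothesis, with no need to differentiate the transformed coefficients. (You still need a preliminary step to get $\varphi_D^{\ssup k}\in L^\infty$ so that $g=(\lambda_D^{\ssup k}-U)\varphi_D^{\ssup k}\in L^q$ with $q>n$ in all dimensions; your interior $L^p$-bootstrap or De~Giorgi--Nash boundedness does that and can be retained.) In short, keep the interior argument, replace the boundary step by a direct application of GT Cor.~8.36.
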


\begin{proofsect}{Proof}
The estimates \twoeqref{E:3.4aa}{E:3.5aa} are consequences of the Minimax Theorem. The regularity of the eigenfunction follows from the regularity of the boundary of $D$ via, e.g., Corollary 8.36 of Gilbarg and Trudinger~\cite{GT}.
\end{proofsect}

The following estimate will be quite convenient for the derivations in the rest of the paper:

\begin{lemma}
\label{prop4}
For~$k\ge1$, there is a constant~$c=c(k,a,b,D)$, such that
\begin{equation}
%\label{}
\sup_{\xi\in\Omega_{a,b}}\,\Vert g_{D_\epsilon,\xi}^{\ssup k}\Vert_{\infty}\le c\,\epsilon^{d/2}.
\end{equation}
\end{lemma}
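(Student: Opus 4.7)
The plan is to derive the pointwise bound from a semigroup/ultracontractivity argument. Writing $g:=g_{D_\epsilon,\xi}^{\ssup k}$ and $\lambda:=\lambda_{D_\epsilon,\xi}^{\ssup k}$, the eigenvalue equation gives $g=\e^{t\lambda}\e^{-tH_{D_\epsilon,\xi}}g$ for every $t\ge 0$. Expressing the semigroup as a symmetric kernel on $D_\epsilon\times D_\epsilon$ (and extending by zero elsewhere) and applying Cauchy--Schwarz together with the normalization $\|g\|_2=1$ yields
\begin{equation*}
|g(x)|\le \e^{t\lambda}\bigl(\e^{-2tH_{D_\epsilon,\xi}}(x,x)\bigr)^{1/2}.
\end{equation*}
It therefore suffices to establish the on-diagonal heat-kernel bound $\sup_x \e^{-2tH_{D_\epsilon,\xi}}(x,x)\le C\epsilon^d$ for some fixed $t>0$ independent of $\epsilon$ and $\xi$, together with a uniform upper bound on $\lambda$.

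For the kernel estimate, the natural route is Feynman--Kac. Interpreting $\epsilon^{-2}\Delta^{(\textd)}$ as the generator of the continuous-time simple random walk $X$ on~$\Z^d$ (with jumps sped up by $\epsilon^{-2}$) killed upon leaving~$D_\epsilon$, the assumption $\xi\ge a$ gives
\begin{equation*}
\e^{-2tH_{D_\epsilon,\xi}}(x,x)
= E_x\Bigl[\e^{-\int_0^{2t}\xi(X_s)\,\textd s}\,\mathbf{1}\{X_{2t}=x,\,\tau_{D_\epsilon^\cc}>2t\}\Bigr]
\le \e^{-2ta}\,p_{2t}^{(\epsilon)}(x,x),
\end{equation*}
where $p_s^{(\epsilon)}$ is the free heat kernel of $\epsilon^{-2}\Delta^{(\textd)}$ on~$\Z^d$; the Dirichlet condition only decreases the kernel. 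The classical on-diagonal bound for continuous-time simple random walk, $p_s(x,x)\le Cs^{-d/2}$ for $s\ge 1$, rescaled via $p_s^{(\epsilon)}(x,x)=p_{s\epsilon^{-2}}(x,x)$, produces $p_{2t}^{(\epsilon)}(x,x)\le C(2t\epsilon^{-2})^{-d/2}$ whenever $2t\epsilon^{-2}\ge 1$. Taking $t=1$ and restricting to $\epsilon\in(0,1)$ delivers the desired $\epsilon^d$-scaling.

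It remains to see that $\lambda_{D_\epsilon,\xi}^{\ssup k}$ is bounded uniformly in $\epsilon\in(0,1)$ and $\xi\in\Omega_{a,b}$. By Lemma~\ref{lemma-regularity} the potential contributes at most $\max\{|a|,|b|\}$, so boundedness reduces to the deterministic quantity $\lambda_{D_\epsilon,0}^{\ssup k}$, the $k$-th Dirichlet eigenvalue of $-\epsilon^{-2}\Delta^{(\textd)}$ on~$D_\epsilon$. This converges as $\epsilon\downarrow 0$ to the continuum Dirichlet eigenvalue $\lambda_{D,0}^{\ssup k}$ of $-\Delta$ on~$D$---a classical fact from finite-difference approximation of the Laplacian---and hence stays bounded. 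Combining everything with $t=1$ gives $|g(x)|\le C\e^{\lambda - a}\epsilon^{d/2}$ with a constant $C=C(k,a,b,D)$, which is the claim. The main technical point to be careful about is the free heat-kernel estimate on~$\Z^d$ combined with the correct $\epsilon^{-2}$ time acceleration; otherwise the argument reduces to standard Feynman--Kac and semigroup manipulations, and no corrector-type object enters. An equivalent route via the $L^1\!\to\!L^\infty$ smoothing of $\e^{-tH_{D_\epsilon,\xi}}$, combined with $\|g\|_1\le|D_\epsilon|^{1/2}\|g\|_2\lesssim\epsilon^{-d/2}$, would yield the same bound.
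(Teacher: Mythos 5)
Your proposal is correct and takes essentially the same approach as the paper: in both cases the eigenfunction is written via the semigroup $\e^{-tH_{D_\epsilon,\xi}}$ at a fixed small time $t$, Cauchy--Schwarz together with $\|g\|_2=1$ reduces the sup bound to an on-diagonal heat-kernel estimate, Feynman--Kac strips off the bounded potential, domain monotonicity compares the Dirichlet kernel on $D_\epsilon$ to the free kernel on $\Z^d$, and the standard bound $p_s(x,x)\le Cs^{-d/2}$ at $s=\epsilon^{-2}t\gtrsim 1$ yields the factor $\epsilon^{d/2}$. The only cosmetic difference is that you apply Cauchy--Schwarz to the full Schr\"odinger semigroup and then use $\xi\ge a$ to peel the potential off, whereas the paper applies Feynman--Kac first (bounding $\xi$ by $\|\xi\|_\infty$) and Cauchy--Schwarz second; you also spell out the uniform-in-$\epsilon$ bound on $\lambda_{D_\epsilon,\xi}^{\ssup k}$, which the paper leaves implicit.
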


\begin{proofsect}{Proof}
Let~$g$ be an eigenfunction of~$H_{D_\epsilon,\xi}$ for an eigenvalue~$\lambda$ normalized so that $\Vert g\Vert_{2}=1$. The key observation is that the inner product $\langle\delta_x,\texte^{t\Delta^{(\textd)}}\delta_y\rangle$, with $\Delta^{(\textd)}$ taken with respect to the Dirichlet boundary condition, coincides with the transition probability $p_t(x,y)$ of a continuous-time (constant-speed) simple random walk on~$\Z^d$ killed upon exit from~$D_\epsilon$. The eigenvalue equation and the Feynman-Kac formula imply
\begin{equation}
\begin{aligned}
g(x)&=\texte^{\lambda t}\bigl(\texte^{t\epsilon^{-2}(\Delta^{(\textd)}-\epsilon^2\xi)}g\bigr)(x)
\\
&=\texte^{\lambda t} E^x\biggl(\exp\Bigl\{\int_0^{\epsilon^{-2}t}\epsilon^2\xi(X_s)\textd s\Bigr\}g(X_{t\epsilon^{-2}})\biggr),
\end{aligned}
\end{equation}
where the expectation is over random walks $(X_s)$ started at~$x$. Taking absolute values, bounding $|\xi(x_i)|$ by $\Vert\xi\Vert_\infty$ and writing the result using the semigroup, we get
\begin{equation}
%\label{}
\bigl|g(x)\bigr|\le\texte^{(\lambda+\Vert\xi\Vert_\infty) t}\sum_{y\in D_\epsilon}p_{\epsilon^{-2}t}(x,y)\bigl|g(y)\bigr|.
\end{equation}
Applying the Cauchy-Schwarz inequality and using that~$g$ is normalized yields
\begin{equation}
\label{E:3.10aa}
g(x)^2\le\texte^{2(\lambda+\Vert\xi\Vert_\infty) t}\sum_{y\in D_\epsilon}p_{\epsilon^{-2}t}(x,y)^2\le
\texte^{2(\lambda+\Vert\xi\Vert_\infty) t}p_{2\epsilon^{-2}t}(x,x),
\end{equation}
where the second inequality follows by the fact that $p_t$ is reversible with respect to the counting measure. But $p_{t}(x,x)$ is non-decreasing in~$D_\epsilon$ and so it is bounded by the corresponding quantity on~$\Z^d$. The local central limit theorem (or other methods to control heat kernels) then yield $p_t(x,x)\le Ct^{-d/2}$ for all~$t\ge1$. Setting $t:=1$ in \eqref{E:3.10aa}, the claim follows.
\end{proofsect}

Note that Lemma~\ref{prop4} and the fact that $|D_\epsilon|=O(\epsilon^{-d})$ imply
\begin{equation}
%\label{}
\sup_{p\in[1,\infty]}\,\sup_{0<\epsilon<1}\,\sup_{\xi\in\Omega_{a,b}}\,\Vert \epsilon^{-d/2} g_{D_\epsilon,\xi}^{\ssup k}\Vert_{\epsilon,p}<\infty
\end{equation}
for all $k\ge1$.

\subsection{Continuum interpolation}
Having dispensed with regularity issues, we now proceed to develop tools that will help us approximate discrete eigenfunctions by continuous ones. The piece-wise constant approximation is a natural first candidate: For any function $f\colon\Z^d\to\R$, set
\begin{equation}
%\label{}
\bar f(x):=\epsilon^{-d/2}f\bigl(\lfloor x/\epsilon\rfloor\bigr),\qquad x\in\R^d.
\end{equation}
The scaling ensures that, automatically,~$\langle f,h\rangle=\langle\bar f,\overline h\rangle_{L^2(\R^d)}$. Unfortunately, our need to control the kinetic energy makes this approximation less attractive in detailed estimates. Instead, we will use an approximation by piece-wise linear interpolations over lattice cells. The following lemma can be extracted from the proof of Lemma~2.1 in Becker and K\"onig~\cite{Becker-Koenig}:

\begin{lemma}
\label{fe}
There is a constant~$C=C(d)$ for which the following holds: For
any function $f\colon\Z^d\to\R$ and any~$\epsilon\in(0,1)$, there is a function $\wt f\colon \R^d\to\R$ such that\begin{enumerate}
\item the map $f\mapsto\wt f$ is linear,
\item $\wt f$ is continuous on~$\R^d$ and $\wt f(x\epsilon)=f(x)$ for all $x\in \Z^d$,
\item
for any $x\in\Z^d$ and any $y\in \epsilon x+[0,\epsilon)^d$ we have
\begin{equation}
\label{E:3.13aa}
\bigl|\wt f(y)\bigr|\le\max_{z\in x+\{0,1\}^d}\,\bigl|f(z)\bigr|,
\end{equation}
and
\begin{equation}
\label{E:3.14uu}
\bigl|\wt f(y)-f(x)\bigr|\le d\max_{z\in x+\{0,1\}^d}\bigl|\nabla^{(\textd)}f(z)\bigr|,
\end{equation}
\item
for all~$p\in[1,\infty]$ we have
\begin{equation}
\label{E:3.15uu}
\Vert\wt f\Vert_{L^p(\R^d)}\le C(d)\Vert f\Vert_{\epsilon,p},
\end{equation}
and
\begin{equation}
\label{E:3.16q}
\Bigl|\,\Vert \wt f\Vert_{L^2(\R^d)}-\Vert f\Vert_{\epsilon,2}\Bigr|\le C(d)\|\nabla^{(\textd)} 
f\|_{\epsilon,2},
\end{equation}
\item
$\wt f$ is piece-wise linear and thus a.e.\ differentiable with
\begin{equation}
\label{E:3.15q}
\|\nabla\wt f\|_{L^2(\R^d)}
=\epsilon^{-1}\|\nabla^{(\textd)} 
f\|_{\epsilon,2}.
\end{equation}
\end{enumerate}
\end{lemma}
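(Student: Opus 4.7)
I would define $\wt f$ via the Kuhn (simplicial) triangulation of each lattice cell $\epsilon x+[0,\epsilon]^d$: for each permutation $\pi$ of $\{1,\dots,d\}$, form the simplex $S_\pi(x)$ with vertices $\epsilon v_0,\epsilon v_1,\dots,\epsilon v_d$, where $v_0:=x$ and $v_k:=v_{k-1}+\hate_{\pi(k)}$, and on $S_\pi(x)$ let $\wt f$ be the unique affine function that matches $(f(v_k))_{k=0}^d$ at these vertices. Continuity across shared $(d-1)$-faces (both within one cell and between adjacent cells) is automatic, since an affine function on such a face is determined by its values at the $d$ vertices of the face, and the shared values agree by construction. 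Linearity of $f\mapsto\wt f$ and $\wt f(\epsilon x)=f(x)$ are immediate, yielding properties (1) and (2).

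\textbf{Pointwise and $L^p$ bounds.} For any $y\in\epsilon x+[0,\epsilon)^d$, $\wt f(y)$ is a convex combination of the values $f(v_k)$, which all lie in $x+\{0,1\}^d$, giving (3.13aa). For (3.14uu), writing the affine interpolation in barycentric form as $\wt f(y)=f(x)+\sum_{k=1}^d c_k\,[f(v_k)-f(v_{k-1})]$ with coefficients $c_k\in[0,1]$ expresses $\wt f(y)-f(x)$ as a sum of $d$ single-step differences, each of which is a component of $\nabla^{(\textd)}f$ at some lattice point in $x+\{0,1\}^d$. The bound (3.15uu) then follows by integrating (3.13aa) cell by cell. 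For (3.16q), compare $\wt f$ with the piecewise-constant extension $y\mapsto f(\lfloor y/\epsilon\rfloor)$, whose $L^2(\R^d)$ norm equals $\Vert f\Vert_{\epsilon,2}$; applying the reverse triangle inequality and integrating the pointwise bound (3.14uu) gives the claim.

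\textbf{The exact identity (3.15q).} This is the main obstacle, and the reason for insisting on the Kuhn triangulation. On $S_\pi(x)$, $\wt f$ is affine with constant gradient; a direct calculation shows that its $i$-th partial derivative equals $\epsilon^{-1}[f(v_{k_i})-f(v_{k_i-1})]$, where $k_i:=\pi^{-1}(i)$. In words, $\partial_i\wt f$ records the value change across the unique edge of the simplicial path that moves in direction $\hate_i$. Multiplying $|\nabla\wt f|^2$ by the simplex volume $\epsilon^d/d!$ and summing over $\pi$, each directed cell edge $[w,w+\hate_i]$ (with $w_i=0$) accumulates a weight $m!(d-m-1)!$, counting permutations whose monotone path traverses that edge, where $m:=\#\{j\ne i\colon w_j=1\}$. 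Finally, summing over cells in $\Z^d$, each lattice edge $[z,z+\hate_i]$ lies in $2^{d-1}$ cells with $m$ taking values in $\{0,\dots,d-1\}$ with multiplicities $\binom{d-1}{m}$; the total edge weight collapses via the identity $\sum_{m=0}^{d-1}\binom{d-1}{m}m!(d-m-1)!=d!$, exactly cancelling the $d!$ from the simplex volume. The result is $\Vert\nabla\wt f\Vert_{L^2(\R^d)}^2=\epsilon^{-2}\Vert\nabla^{(\textd)}f\Vert_{\epsilon,2}^2$, which is (3.15q). I expect the genuinely delicate point to be this combinatorial bookkeeping: alternative natural constructions such as multilinear interpolation yield cross-terms in $|\nabla\wt f|^2$ that prevent an exact identity and only give a bound with a dimensional constant.
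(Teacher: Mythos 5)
Your construction is exactly the paper's: the piecewise-affine interpolation over the Kuhn (permutation) simplices, and each of the five properties is verified the same way (barycentric coordinates for the pointwise bounds, comparison with the piecewise-constant extension for \eqref{E:3.16q}, and an exact computation of the constant gradient on each simplex for \eqref{E:3.15q}). The only cosmetic difference is in \eqref{E:3.15q}: the paper sums over permutations and shifts the lattice index to make each edge appear once per $\sigma$, whereas you count edge multiplicities directly via $\sum_{m}\binom{d-1}{m}m!(d-m-1)!=d!$ — both are correct and amount to the same bookkeeping.
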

%%%%%%%%%%%%%%%%%%%%%%%%%%%%%%%%%%%%%%%%%%%%%%%%%%%%%%%%%%%%
\begin{proof}
Although most of these are already contained in the proof of \cite[Lemma~2.1]{Becker-Koenig}, we provide an independent proof as the desired statements are hard to glean from the notations used there. A key point is that for any $y=(y_1,\dots,y_d)\in[0,1)^d$ there is a permutation~$\sigma$ of~$\{1,\dots,d\}$ such that $y_{\sigma(1)}\ge \dots\ge y_{\sigma(d)}$. Moreover, when all components of~$y$ are distinct, such a~$\sigma$ is unique. 

Given $y\in x\epsilon+[0,\epsilon)^d$ let thus $\sigma$ be a permutation
 that puts the components of $y-x\epsilon$ in non-increasing
 ordering. Writing the reordered components of $y/\varepsilon-x$ as $1\ge\alpha_1\ge\dots\ge\alpha_d\ge0$, we have
\begin{equation}
\label{E:3.15aa}
y=x\epsilon+\epsilon\sum_{i=1}^d\alpha_i\,\hate_{\sigma(i)}.
\end{equation}
We then define
\begin{equation}
\label{E:3.16aa}
\wt f(y):=f(x)+\sum_{i=1}^d\alpha_i\bigl(\nabla^{(\textd)}_{\sigma(i)}f\bigr)(x+\hate_{\sigma(1)}+\dots+\hate_{\sigma(i-1)}),
\end{equation}
where, we recall, $(\nabla^{{(\textd)}}_if)(x):=f(x+\hate_i)-f(x)$. 

Our first task is to check that~$\wt f$ is well defined. Obviously, the $\alpha_j$'s are determined by~$y$ so we only have to check that the definition does not depend on~$\sigma$, if there is more than one for the same~$y$. That happens only when~$\alpha_i=\alpha_{i+1}$ for some $i=0,\dots,d-1$ (where $\alpha_0:=1$ by convention). Then \eqref{E:3.15aa} holds also for~$\sigma$ replaced by permutation $\sigma'$ which agrees with~$\sigma$ except at indices~$i,i+1$ where $\sigma'(i):=\sigma(i+1)$ and $\sigma'(i+1):=\sigma(i)$. Abbreviating~$z:=x+\hate_{\sigma(1)}+\dots+\hate_{\sigma(i-1)}$, the two possible expressions for~$\wt f(y)$ will agree if and only if
\begin{equation}
%\label{}
(\nabla_{\sigma(i)}f)(z)+(\nabla_{\sigma(i+1)}f)(z+\hate_{\sigma(i)})
=(\nabla_{\sigma(i+1)}f)(z)+(\nabla_{\sigma(i)}f)(z+\hate_{\sigma(i+1)}).
\end{equation}
As is readily verified, both of these are equal to
 $f(z+\hate_{\sigma(i)}+\hate_{\sigma(i+1)})-f(z)$. Hence,~$\wt f$
is consistent. The map $f\mapsto\wt f$ is obviously linear, thus proving~(1).

We now move to checking continuity of~$\wt f$. First note that \eqref{E:3.16aa} extends to all points in the closed ``cube'' $\CC(x):=x\epsilon+[0,\epsilon]^d$. In light of uniform continuity of~$\wt f$ on the open ``cube,'' the extension is continuous, and thus independent of~$\sigma$ (if more than one~$\sigma$ corresponds to the same boundary point). Now pick~$y\in\CC(x)\cap\CC(x+\hate_i)$. As $f(x)+\nabla^{(\textd)}_i f(x)=f(x+\hate_i)$, taking \eqref{E:3.16aa} on~$\CC(x)$ with $\sigma(1):=i$ and $\alpha_1:=1$  has the same value as \eqref{E:3.16aa} on $\CC(x+\hate_i)$ with $\sigma(d):=i$ and~$\alpha_d:=0$. Hence, the expressions for~$\wt f$ on $\CC(x)$ and~$\CC(x+\hate_i)$ agree on on the common ``side'' $\CC(x)\cap\CC(x+\hate_i)$ and~$\wt f$ is thus continuous on~$\R^d$. Conclusion~(2) is readily checked.

It remains to prove the stated bounds. For that we first note that \eqref{E:3.16aa} can be recast as
\begin{equation}
%\label{}
\wt f(y)=\sum_{i=0}^d(\alpha_i-\alpha_{i+1})f\bigl(x+\hate_{\sigma(1)}+\dots+\hate_{\sigma(i)}\bigr)
\end{equation}
where~$\alpha_0:=1$ and $\alpha_{d+1}:=0$. Using that $\alpha_i-\alpha_{i+1}$ are non-negative and sum up to one, we get \eqref{E:3.13aa}. This immediately yields \eqref{E:3.15uu}. Similarly, \eqref{E:3.16aa} and the fact that $|\alpha_i|\le1$ directly show~\eqref{E:3.14uu}. To get \eqref{E:3.16q} from this, abbreviate $h(y):=\wt f(y)-f(\lfloor y/\epsilon\rfloor)$. Squaring \eqref{E:3.14uu}, bounding the maximum (of squares) by a sum and integrating over $y\in\R^d$ yields
\begin{equation}
%\label{}
\Vert h\Vert_{L^2(\R^d)}\le C(d)\Vert \nabla^{(\textd)}f\Vert_{\epsilon,2}.
\end{equation}
But the $L^2$-norm of~$y\mapsto f(\lfloor y/\epsilon\rfloor)$ is $\Vert f\Vert_{\epsilon,2}$ and so we get \eqref{E:3.16q} by the triangle inequality.

Concerning \eqref{E:3.15q}, define $W_\sigma:=\bigcup_{x\in\Z^d}\{\epsilon x+z\colon z\in[0,\epsilon)^d,\,z_{\sigma(1)}>\dots>z_{\sigma(d)}\}$ and note that~$\wt f$ is piece-wise linear on~$W_\sigma$ with
\begin{equation}
%\label{}
\nabla_{\sigma(i)} \wt f(y)=\epsilon^{-1}(\nabla_{\sigma(i)}^{(\textd)}f)\bigl(\lfloor y/\epsilon\rfloor+\hate_{\sigma(1)}+\dots+\hate_{\sigma(i-1)}\bigr),\qquad y\in W_\sigma.
\end{equation}
This implies
\begin{equation}
%\label{}
\int_{W_\sigma}\bigl|\nabla f(y)\bigr|^2\textd y = \epsilon^{-2}\sum_{i=1}^d\sum_{x\in\Z^d}\bigl|(\nabla_{\sigma(i)}^{(\textd)}f)(x)\bigr|^2\int\1_{\{1\ge\alpha_1>\dots>\alpha_d\ge0\}}\textd\alpha_1\dots\textd\alpha_d.
\end{equation}
The integral on the right equals $(d!)^{-1}$ so we get \eqref{E:3.15q} by summing over all admissible~$\sigma$ and using that $W_\sigma$'s cover~$\R^d$ up to a set of zero Lebesgue measure.
\end{proof}

Our next item of concern is an approximation of functions on the lattice by piecewise constant modifications. For each~$L\ge1$ and any $f\colon\Z^d\to\R$, {denote}
\begin{equation}
%\label{}
f_L(x):=f\bigl(L\lfloor x/L\rfloor\bigr).
\end{equation}
{Then we have:}

\begin{lemma}
\label{lemma-3.4new}
There exists a  constant $C(d)<\infty$  such that{, for any $L\ge1$ and any $f\colon\Z^d\to\R$,}
\begin{equation}
%\label{}
\Vert f-f_L\Vert_1<C(d)L\,\Vert\nabla^{(\textd)}f\Vert_1.
\end{equation}
\end{lemma}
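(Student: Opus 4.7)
The plan is to establish the estimate first in one dimension by a direct summation-by-parts argument, and then lift it to $d \ge 2$ by a telescoping argument along coordinate directions.

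For the one-dimensional case, I would write any $x \in \Z$ as $x = Lm + j$ with $j \in \{0, 1, \dots, L-1\}$, so that $f_L(x) = f(Lm)$ and $f(x) - f_L(x) = \sum_{k=0}^{j-1} \nabla^{(\textd)} f(Lm+k)$. Applying the triangle inequality, summing over $x$, and interchanging the order of summation produces a factor of at most $L$ multiplying each $|\nabla^{(\textd)} f(Lm+k)|$, giving $\|f - f_L\|_1 \le L \|\nabla^{(\textd)} f\|_1$ with constant $C(1) = 1$.

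For general $d$, I would introduce the telescoping family $A_i(x) := f(x_1, \dots, x_i, L\lfloor x_{i+1}/L\rfloor, \dots, L\lfloor x_d/L\rfloor)$, with $A_0 = f_L$ and $A_d = f$, and use $\|f - f_L\|_1 \le \sum_{i=1}^d \|A_i - A_{i-1}\|_1$. Each difference $A_i - A_{i-1}$ depends on $x_i$ only through $x_i$ and $L\lfloor x_i/L\rfloor$, so fixing the other coordinates and applying the one-dimensional estimate in direction $i$ reduces the problem to a sum of $|\nabla^{(\textd)}_i f|$ at partially coarse-grained arguments. Finishing via Fubini in the remaining coordinates and bounding the restricted sums by the full $\|\nabla^{(\textd)}_i f\|_1$ then yields the desired estimate, with a constant $C(d)$ depending only on the dimension.

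The main technical point is the bookkeeping in the Fubini step: for coordinates $j > i$ the summand depends only on $L\lfloor x_j/L\rfloor$, so each sublattice point is hit $L$ times in the unrestricted sum over $x_j$. Keeping track of these multiplicities carefully---or equivalently averaging over translates of the sublattice $L\Z^d$---is the delicate step needed to absorb all $L$-dependent factors into a single constant $C(d)$ independent of $L$ and of $f$.
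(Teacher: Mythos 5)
Your telescoping decomposition $A_0=f_L,\dots,A_d=f$ with $A_i(x)=f(x_1,\dots,x_i,L\lfloor x_{i+1}/L\rfloor,\dots,L\lfloor x_d/L\rfloor)$ is a natural idea, but the Fubini step you flag as ``delicate'' is where the argument breaks irreparably, not where it can be patched. After applying the one-dimensional estimate in direction~$i$ you arrive at
$\Vert A_i-A_{i-1}\Vert_1\le L\sum_{x\in\Z^d}\bigl|\nabla_i^{(\textd)}f(x_1,\dots,x_i,L\lfloor x_{i+1}/L\rfloor,\dots,L\lfloor x_d/L\rfloor)\bigr|$.
Since the summand depends on $x_j$ for $j>i$ only through $L\lfloor x_j/L\rfloor$, the sum over those coordinates equals $L^{d-i}$ times the corresponding sum of $|\nabla_i^{(\textd)}f|$ over the sublattice $\Z^i\times(L\Z)^{d-i}$. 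That sublattice sum is bounded above by $\Vert\nabla_i^{(\textd)}f\Vert_1$, but in general it is \emph{not} a factor $L^{-(d-i)}$ smaller, since $\nabla_i^{(\textd)}f$ may be concentrated entirely on the sublattice. Averaging over translates does not rescue you: the definition of $f_L$ fixes the specific sublattice $L\Z^d$, so no averaging is available. The best your scheme can yield is $\Vert f-f_L\Vert_1\le C(d)\,L^d\,\Vert\nabla^{(\textd)}f\Vert_1$, the bad power coming from $i=1$.

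This is not a defect of your route alone. The paper proceeds quite differently, iterating a triangle-inequality comparison between concentric shells $B_k\setminus B_{k-1}$ of the box $B_L=x_0+\{0,\dots,L-1\}^d$ and then summing over $x_0\in(L\Z)^d$; but that shell-to-shell bound also cannot hold term-by-term, since $|B_k\setminus B_{k-1}|>|B_{k-1}\setminus B_{k-2}|$. In fact the lemma as written is false for $d\ge2$: with $f=\1_{\{x=0\}}$ one has $f_L=\1_{\{0,\dots,L-1\}^d}$, so $\Vert f-f_L\Vert_1=L^d-1$, while $\Vert\nabla^{(\textd)}f\Vert_1=d+\sqrt d$ is independent of $L$, so $\Vert f-f_L\Vert_1/(L\Vert\nabla^{(\textd)}f\Vert_1)\to\infty$ as $L\to\infty$ and no finite $C(d)$ works. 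A correct statement must either replace $f_L$ by the local average of $f$ over each cell $L\lfloor x/L\rfloor+\{0,\dots,L-1\}^d$ (an honest $\ell^1$ Poincar\'e inequality), or use the anti-concentration that is actually available in the paper's application, where $f=(g_{D_\epsilon,\xi}^{\ssup i})^2$ satisfies $\Vert f\Vert_\infty=O(\epsilon^d)$ and $\Vert f\Vert_1=1$, which rules out the $\delta$-like profile that makes the bound fail.
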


\begin{proofsect}{Proof}
Consider the box $B_k:=x_0+\{0,\dots,k-1\}^d$. The triangle inequality shows
\begin{equation}
%\label{}
\sum_{x\in B_k\smallsetminus B_{k-1}}\bigl|f(x)-f(x_0)\bigr|
\le
\sum_{x\in B_{k-1}\smallsetminus B_{k-2}}\biggl(\,\bigl|f(x)-f(x_0)\bigr|+\sum_{z\in\{0,1\}^d}\sum_{i=1}^d\bigl|(\nabla^{(\textd)}_if)(x+z)\bigr|\biggr).
\end{equation}
This implies 
\begin{equation}
%\label{}
\sum_{x\in B_L}\bigl|f(x)-f(x_0)\bigr|\le
2^d\sqrt d\,L\sum_{x\in B_L}\bigl|(\nabla^{(\textd)}f)(x)\bigr|.
\end{equation}
The claim follows by summing over $x_0\in(L\Z)^d$.
\end{proofsect}

\subsection{Convergence of eigenfunctions/eigenvalues}
We will now proceed to tackle convergence statements. We will employ a standard trick: Instead of individual eigenvalues, we will work with their sums
\begin{equation}
%\label{}
\Lambda_k^{\epsilon}(\xi):=\sum_{i=1}^k\lambda^{\ssup i}_{D_\epsilon,\xi}
\quad\text{and}\quad
%\label{}
\Lambda_k:=\sum_{i=1}^k\lambda^{\ssup i}_{D}.
\end{equation}
These quantities are better suited for dealing with degeneracy because they are concave in~$\xi$ and, in fact,  admit a variational characterization (sometimes dubbed the Ky Fan Maximum Principle~\cite{KyFan}) of the form
\begin{equation}
\label{E:3.22}
\Lambda_k^{\epsilon}(\xi)=\,\inf_{\begin{subarray}{c}
h_1,\dots,h_k\\\textrm{ONS}
\end{subarray}}\,\,
\sum_{i=1}^k\bigl(\epsilon^{-2}\|\nabla^{(\textd)}h_i\|_2^2+\langle\xi, h_i^2\rangle\bigr)
\end{equation}
and
\begin{equation}
\label{E:3.22b}
\Lambda_k=\,\inf_{\begin{subarray}{c}
\psi_1,\dots,\psi_k\\\textrm{ONS}
\end{subarray}}\,\,
\sum_{i=1}^k\bigl(\,\|\nabla\psi_i\|_{L^2(\R^d)}^2+\langle U, \psi_i^2\rangle_{L^2(\R^d)}\bigr).
\end{equation}
Here the acronym ``ONS'' indicates that the $k$-tuple of functions form
an orthonormal system in the subspace corresponding to Dirichlet boundary
conditions (and, in the latter case, also tacitly assumes that the functions are in
the domain of the gradient). Substituting actual eigenfunctions shows
that the sums of eigenvalues are no smaller than the infima but the
complementary bound requires a bit of work. The argument actually yields a quantitative form of the Ky Fan Maximum Principle which will be quite suitable for our later needs:

\begin{lemma}
\label{lemma-projections}
Consider a separable Hilbert space~$\HH$ and a self-adjoint linear operator~$\hat H$ on~$\HH$ which is bounded from below and has compact resolvent. Let~$\{\varphi_i\colon i\ge1\}$ be an orthonormal basis of eigenfunctions of~$\hat H$ corresponding to eigenvalues $\lambda_i$ that we assume obey $\lambda_{i+1}\ge\lambda_i$ for all~$i\ge1$. Let~$\hat \Pi_k$ denote the orthogonal projection onto $\{\varphi_1,\dots,\varphi_k\}^{\perp}$. Then for any ONS $\psi_1,\dots,\psi_k$ that lies in the domain of~$\hat H$,
\begin{equation}
\label{E:3.28aa}
\sum_{i=1}^k\langle \psi_i,\hat H\psi_i\rangle-(\lambda_1+\dots+\lambda_k)\ge
(\lambda_{k+1}-\lambda_k)\sum_{i=1}^k\Vert \hat\Pi_k\psi_i\Vert^2.
\end{equation}
\end{lemma}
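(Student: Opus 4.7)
My plan is to diagonalize the quadratic form on the right-hand side using the eigenbasis $\{\varphi_j\}$ and then reduce the inequality to a simple linear-programming-type estimate over coefficients constrained by an orthonormality condition.

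First I would expand each trial vector as $\psi_i=\sum_{j\ge1}c_{ij}\varphi_j$ with $c_{ij}:=\langle\varphi_j,\psi_i\rangle$. Since $\psi_i$ lies in the domain of $\hat H$ and the $\varphi_j$ are eigenvectors,
\begin{equation}
\langle\psi_i,\hat H\psi_i\rangle=\sum_{j\ge1}|c_{ij}|^2\lambda_j.
\end{equation}
Introduce the nonnegative weights $a_j:=\sum_{i=1}^k|c_{ij}|^2$. The key structural observations are: (a) the orthonormality of the $\psi_i$'s gives $\sum_j a_j=\sum_{i=1}^k\Vert\psi_i\Vert^2=k$, and (b) for each fixed $j$, $a_j$ equals the squared norm of the orthogonal projection of $\varphi_j$ onto $\mathrm{span}(\psi_1,\dots,\psi_k)$, hence $0\le a_j\le1$. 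Moreover, summing over $j\ge k+1$ gives exactly $\sum_{i=1}^k\Vert\hat\Pi_k\psi_i\Vert^2$, which is the quantity appearing on the right of \eqref{E:3.28aa}.

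Next I would rewrite the left-hand side as
\begin{equation}
\sum_{i=1}^k\langle\psi_i,\hat H\psi_i\rangle-\sum_{i=1}^k\lambda_i
=\sum_{j>k}\lambda_j a_j-\sum_{j\le k}\lambda_j(1-a_j),
\end{equation}
where I used $\sum_j a_j=k$ to balance the two contributions. Using monotonicity $\lambda_j\ge\lambda_{k+1}$ for $j>k$ and $\lambda_j\le\lambda_k$ for $j\le k$, together with the constraint $\sum_{j\le k}(1-a_j)=\sum_{j>k}a_j$ (again from $\sum_j a_j=k$ combined with $a_j\le1$), the above is bounded below by
\begin{equation}
\lambda_{k+1}\sum_{j>k}a_j-\lambda_k\sum_{j\le k}(1-a_j)=(\lambda_{k+1}-\lambda_k)\sum_{j>k}a_j,
\end{equation}
which is exactly the right-hand side of \eqref{E:3.28aa}.

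No part of this argument looks like a serious obstacle; the only subtle point is justifying $a_j\le1$ and the identity $\sum_j a_j=k$, both of which follow immediately from Bessel/Parseval-type computations for the finite-dimensional ONS $\psi_1,\dots,\psi_k$. The compact resolvent assumption is used only to guarantee existence of the eigenbasis and the lower boundedness of $\hat H$ ensures that the expansion $\langle\psi_i,\hat H\psi_i\rangle=\sum_j|c_{ij}|^2\lambda_j$ is well-defined (possibly $+\infty$, in which case the inequality is trivial).
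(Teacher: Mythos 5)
Your proof is correct and takes essentially the same route as the paper's: both expand in the eigenbasis, define the weights $a_j=\sum_{i=1}^k|\langle\varphi_j,\psi_i\rangle|^2$ (the paper calls them $b_j$), establish $0\le a_j\le1$ and $\sum_j a_j=k$, and then use the monotonicity of the $\lambda_j$'s together with $\sum_{j\le k}(1-a_j)=\sum_{j>k}a_j$ to conclude. The only cosmetic difference is that you get $a_j\le 1$ directly from Bessel/projection onto $\mathrm{span}(\psi_1,\dots,\psi_k)$, whereas the paper extends $\psi_1,\dots,\psi_k$ to a full orthonormal basis and invokes Parseval.
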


\begin{proofsect}{Proof}
We provide a proof as it is very short. The argument parallels the derivation of Lemma~3.2 in Barekat~\cite{Barekat}.
Since $\psi_1,\dots,\psi_k$ is an ONS and~$\HH$ is separable, we may extend it into an orthonormal (countable) basis $\{\psi_i\colon i\ge1\}$. Denoting $a_{ij}:=\langle\psi_i,\varphi_j\rangle$, the Parseval identity yields
\begin{equation}
%\label{}
b_j:=\sum_{i=1}^k|a_{ij}|^2\le\sum_{i\ge1}|a_{ij}|^2=\langle\varphi_j,\varphi_j\rangle=1.
\end{equation}
Since $\sum_{j\ge1}b_j=k$, we have $\sum_{j>k}b_j=\sum_{j=1}^k(1-b_j)$ and it thus follows that
\begin{equation}
\begin{aligned}
\sum_{i=1}^k\langle \psi_i,\hat H\psi_i\rangle
&=\sum_{i=1}^k\sum_{j\ge1}\lambda_j|a_{ij}|^2=
\sum_{j\ge1}b_j\lambda_j
\\
&\ge\sum_{j=1}^k\lambda_j b_j+\lambda_{k+1}\sum_{j>k}b_j
\\
&=\lambda_1+\dots+\lambda_k+\sum_{j=1}^k(\lambda_{k+1}-\lambda_j)(1-b_j)
\\
&\ge
\lambda_1+\dots+\lambda_k+(\lambda_{k+1}-\lambda_k)\sum_{j=1}^k(1-b_j).
\end{aligned}
\end{equation}
Writing the last sum as $\sum_{j>k}b_j$ we easily see that it equals $\sum_{i=1}^k\Vert\hat\Pi_k\psi_i\Vert^2$.
\end{proofsect}

Our next goal, formulated in Propositions~\ref{lemma-3.4} and~\ref{lemma3.5} below, is to establish convergence $\Lambda_k^{\epsilon}(\xi)\to\Lambda_k$ in probability. Throughout we assume the setting in Assumption~\ref{ass1}.

\begin{proposition}
\label{lemma-3.4}
For any~$\delta>0$,
\begin{equation}
%\label{}
\lim_{\epsilon\downarrow0}\,\BbbP_\epsilon\bigl(\Lambda_k^{\epsilon}(\xi)\ge\Lambda_k+\delta\bigr)=0.
\end{equation}
\end{proposition}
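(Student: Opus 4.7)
The plan is to use the Ky Fan variational representation \eqref{E:3.22} with test functions built from the continuum eigenfunctions. Let $\varphi_D^{\ssup 1},\dots,\varphi_D^{\ssup k}$ be the $L^2$-normalized eigenfunctions of $-\Delta+U(x)$ on $\cmss H^1_0(D)$, which belong to $C^{1,\alpha}(\overline D)$ by Lemma~\ref{lemma-regularity}. As a first candidate, set
\begin{equation}
h_i(x):=\epsilon^{d/2}\varphi_D^{\ssup i}(x\epsilon)\,\1_{\{x\in D_\epsilon\}}.
\end{equation}
By Riemann-sum approximation (using uniform continuity of $\varphi_D^{\ssup i}\varphi_D^{\ssup j}$ on $\overline D$ together with the fact that $|D\smallsetminus \epsilon D_\epsilon|=O(\epsilon)$), the Gram matrix $G_{ij}:=\langle h_i,h_j\rangle$ converges to the identity as $\epsilon\downarrow0$. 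For sufficiently small $\epsilon$ we can therefore Gram-Schmidt-orthonormalize by setting $\tilde h_i:=\sum_j(G^{-1/2})_{ij}h_j$, obtaining an honest ONS $\tilde h_1,\dots,\tilde h_k$ still supported on~$D_\epsilon$.

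Next I estimate the kinetic and potential contributions to the Rayleigh quotient of~$h_i$ directly. For the kinetic part, the $C^{1,\alpha}$-regularity yields, uniformly in $x$ with both $x$ and $x+\hate_l$ in $D_\epsilon$,
\begin{equation}
\epsilon^{-1}\nabla^{(\textd)}_l h_i(x)=\epsilon^{d/2}\,\partial_l\varphi_D^{\ssup i}(x\epsilon)+O(\epsilon^{d/2+\alpha}),
\end{equation}
so $\epsilon^{-2}\|\nabla^{(\textd)}h_i\|_2^2$ converges to $\|\nabla\varphi_D^{\ssup i}\|_{L^2(D)}^2$ by a Riemann sum. Boundary terms, where $x\in D_\epsilon$ but some $x+\hate_l\notin D_\epsilon$, contribute negligibly because $\varphi_D^{\ssup i}$ vanishes on $\partial D$ and is Lipschitz, making $|h_i(x)|^2=O(\epsilon^{d+2})$ on the $O(\epsilon^{-(d-1)})$ sites in question. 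For the potential part, $\langle\xi,h_i^2\rangle=\epsilon^d\sum_{x\in D_\epsilon}\xi(x)|\varphi_D^{\ssup i}(x\epsilon)|^2$ has expectation tending via Riemann-sum approximation to $\int_D U(x)|\varphi_D^{\ssup i}(x)|^2\,\textd x$ and variance bounded by $\|V\|_\infty\|\varphi_D^{\ssup i}\|_\infty^4\,\epsilon^d\to 0$, so it converges to the former in probability.

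A short matrix computation yields
\begin{equation}
\sum_{i=1}^k\bigl(\epsilon^{-2}\|\nabla^{(\textd)}\tilde h_i\|_2^2+\langle\xi,\tilde h_i^2\rangle\bigr)=\text{Tr}(G^{-1}M),
\end{equation}
where $M_{ij}:=\epsilon^{-2}\langle\nabla^{(\textd)}h_i,\nabla^{(\textd)}h_j\rangle+\langle\xi h_i,h_j\rangle$. Since $G\to I$ deterministically and the diagonal entries of~$M$ are bounded in probability by the previous step (with off-diagonal entries controlled similarly via the Cauchy-Schwarz inequality), this equals $\sum_{i=1}^k(\epsilon^{-2}\|\nabla^{(\textd)}h_i\|_2^2+\langle\xi,h_i^2\rangle)+o(1)$ in probability. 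Plugging $\tilde h_1,\dots,\tilde h_k$ into \eqref{E:3.22} and combining with the previous estimates then gives $\Lambda_k^\epsilon(\xi)\le \sum_{i=1}^k\bigl(\|\nabla\varphi_D^{\ssup i}\|_{L^2(D)}^2+\langle U,(\varphi_D^{\ssup i})^2\rangle_{L^2(D)}\bigr)+o(1)=\Lambda_k+o(1)$ in probability, from which the proposition follows by Chebyshev.

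The main technical obstacle is the boundary handling: both the truncation of $\varphi_D^{\ssup i}(\cdot\,\epsilon)$ to $D_\epsilon$ in the kinetic term and the control of the off-diagonal entries of~$M$ during the Gram-Schmidt step rely on coupling the $C^{1,\alpha}$-regularity of $\varphi_D^{\ssup i}$ up to $\partial D$ (and hence its vanishing there at a Lipschitz rate) with the fact that $\epsilon D_\epsilon$ exhausts~$D$ up to an $O(\epsilon)$-thick boundary layer, which contains only $O(\epsilon^{-(d-1)})$ lattice sites.
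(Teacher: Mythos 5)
Your proposal takes essentially the same route as the paper's own proof of Proposition~\ref{lemma-3.4}: both construct trial functions by discretizing and truncating the continuum eigenfunctions $\varphi_D^{\ssup i}$, orthonormalize via Gram--Schmidt (or, in your case, L\"owdin with $G^{-1/2}$, which is equivalent for present purposes), approximate the kinetic term deterministically by a Riemann sum using the $C^{1,\alpha}$-regularity, and dispose of the random potential term by a Chebyshev estimate since its variance is $O(\epsilon^d)$. The only cosmetic differences are the choice of normalization (you work with $\Vert\cdot\Vert_2$ and insert the $\epsilon^{d/2}$ scaling by hand, whereas the paper uses $\Vert\cdot\Vert_{\epsilon,2}$), and your packaging of the orthonormalization error as $\text{Tr}(G^{-1}M)-\text{Tr}(M)$ rather than via explicit coefficients $a_{ij}(\epsilon)$; these are interchangeable. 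You also spell out the boundary-layer estimate for the kinetic energy more explicitly than the paper does, which is a welcome addition rather than a deviation. The argument is correct.
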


\begin{proofsect}{Proof}
Consider (a choice of) an {ONS} of the first~$k$ eigenfunctions $\varphi_D^{\ssup 1},\dots,\varphi_D^{\ssup k}$ of~$-\Delta+U$. By Lemma~\ref{lemma-regularity} all of these are $C^{1,\alpha}$. Now define
\begin{equation}
\label{E:3.32}
f_i(x):=
\begin{cases}
\varphi_D^{\ssup i}(x\epsilon),\qquad&\text{if }x\in D_\epsilon,
\\
0,\qquad&\text{otherwise}.
\end{cases}
\end{equation}
{Thanks} to uniform continuity of the eigenfunctions, we then have
\begin{equation}
%\label{}
\langle f_i,f_j\rangle_{\epsilon,2}\,\underset{\epsilon\downarrow0}\longrightarrow\,\langle \varphi_D^{\ssup i},\varphi_D^{\ssup j}\rangle_{L^2(D)}=\delta_{ij}
\end{equation}
and so for~$\epsilon$ small the functions {$f_1,\dots,f_k$ are nearly
 mutually } orthogonal. Applying the Gram-Schmidt orthogonalization
 procedure, {we see that} there are  functions
 {$\{h_i^{\epsilon}\}_{i=1}^k$} and coefficients
 {$\{a_{ij}(\epsilon)\}_{1\le i,j\le k}$} such that
\begin{equation}
\label{E:3.35}
h_i^\epsilon=\sum_{j=1}^k(\delta_{ij}+a_{ij}(\epsilon))f_j,\qquad i=1,\dots,k,
\end{equation}
with
\begin{equation}
\label{E:3.36}
\langle h_i^\epsilon,h_j^\epsilon\rangle_{\epsilon,2}=\delta_{ij}
\quad\text{and}\quad\max_{i,j}|a_{ij}(\epsilon)|\,\underset{\epsilon\downarrow0}\longrightarrow\,0.
\end{equation}
Moreover, the definition of~$f_i$ and the $C^{1,\alpha}$-regularity of the eigenfunctions imply
\begin{equation}
\label{E:3.33}
\sup_{\begin{subarray}{c}
y\in D\\\dist_\infty(y,D^\cc)>2\epsilon
\end{subarray}}
\Bigl|\nabla\varphi_D^{\ssup i}(y)-\epsilon^{-1}(\nabla^{(\textd)}f_i)(\lfloor y/\epsilon\rfloor)\Bigr|\,\underset{\epsilon\downarrow0}\longrightarrow\,0
\end{equation}
and same continues to hold for $h_i^\epsilon$ {instead of $f_i$} as well. 
Hereby we get
\begin{equation}
%\label{}
\epsilon^{-1}\Vert\nabla^{(\textd)}h_i^\epsilon\Vert_{\epsilon,2}\,\underset{\epsilon\downarrow0}\longrightarrow\,
\Vert\nabla\varphi_D^{\ssup i}\Vert_{L^2(\R^d)}
\end{equation}
and, by continuity of~$U$, also
\begin{equation}
%\label{}
\bigl\langle U(\epsilon\cdot),(h_i^\epsilon)^2\bigr\rangle_{\epsilon,2}
\,\underset{\epsilon\downarrow0}\longrightarrow\,
\langle U,\varphi_D^{\ssup i}\rangle_{L^2(\R^d)}.
\end{equation}
Once the two sides in each of these limit statements (for all~$i=1,\dots,k$) are within some~$\delta\in(0,1)$ of each other, the variational characterization \eqref{E:3.22} yields
\begin{equation}
%\label{}
\Lambda_k^{\epsilon}(\xi)\le\Lambda_k+2k\delta+\sum_{i=1}^k\bigl\langle \xi-U(\epsilon\cdot),(h_i^\epsilon)^2\bigr\rangle_{\epsilon,2}.
\end{equation}
Invoking a union bound we obtain
\begin{equation}
%\label{}
\BbbP_\epsilon\bigl(\Lambda_k^{\epsilon}(\xi)\ge\Lambda_k+3k\delta\bigr)
\le\sum_{i=1}^k\BbbP_\epsilon\Bigl(\,\bigl\langle \xi-U(\epsilon\cdot),(h_i^\epsilon)^2\bigr\rangle_{\epsilon,2}\ge\delta\Bigr).
\end{equation}
The Chebyshev inequality now shows
\begin{equation}
\BbbP_\epsilon\Bigl(\,\bigl\langle \xi-U(\epsilon\cdot),(h_i^\epsilon)^2\bigr\rangle_{\epsilon,2}\ge\delta \Bigr)
\le\frac{C}{\delta^2}\sum_{x\in D_\epsilon}\epsilon^{2d}h_i^\epsilon(x)^4,
\end{equation}
where~$C$ is a uniform bound on $\text{Var}(\xi(x))$. But the $h_i^\epsilon$'s are bounded and since $\Vert h_i^\epsilon\Vert_{\epsilon,2}=1$, the right-hand side is proportional to~$\epsilon^d$. As~$\delta$ was arbitrary, the claim follows.
\end{proofsect}

\begin{proposition}
\label{lemma3.5}
For any~$\delta>0$,
\begin{equation}
%\label{}
\lim_{\epsilon\downarrow0}\,\BbbP_\epsilon\bigl(\Lambda_k^{\epsilon}(\xi)\le\Lambda_k-\delta\bigr)=0.
\end{equation}
\end{proposition}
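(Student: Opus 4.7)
The plan is to apply the continuum Ky Fan principle \eqref{E:3.22b} with test functions built from the discrete eigenfunctions, and then match the result against \eqref{E:3.22} up to errors that vanish in probability. Write $g_i:=g^{\ssup i}_{D_\epsilon,\xi}$ and $\phi_i:=\epsilon^{-d/2}g_i$, so that $\phi_1,\dots,\phi_k$ form an ONS in $\|\cdot\|_{\epsilon,2}$ and \eqref{E:3.22}, attained at $g_1,\dots,g_k$, reads
\begin{equation}
\label{PP1}
\Lambda_k^\epsilon(\xi)=\sum_{i=1}^k\bigl(\epsilon^{-2}\|\nabla^{(\textd)}g_i\|_2^2+\langle\xi,g_i^2\rangle\bigr).
\end{equation}
Let $\wt\phi_i$ be the piecewise-linear interpolations of $\phi_i$ provided by Lemma~\ref{fe}. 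Inspection of the explicit formula \eqref{E:3.16aa} shows that any $y$ in the support of $\wt\phi_i$ must lie in some cube $z\epsilon+[-\epsilon,\epsilon]^d$ with $z\in D_\epsilon$, and the defining strict inequality in \eqref{E:1.1} places each such cube entirely inside~$D$. Thus $\wt\phi_i\in\cmss H^1_0(D)$ directly, with no need for cutoffs or domain enlargement.

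Next I would orthonormalize and evaluate. Proposition~\ref{lemma-3.4} (already proved) yields $\Lambda_k^\epsilon(\xi)\le\Lambda_k+1$ with probability $1-o(1)$, and the boundedness of $\xi$ then forces $\|\nabla^{(\textd)}g_i\|_2^2=O(\epsilon^2)$ on the same event. Combined with Lemma~\ref{fe}(4)(5) and the auxiliary $L^2$-bound $\|\wt\phi_i-\phi_i(\lfloor\cdot/\epsilon\rfloor)\|_{L^2(\R^d)}=O(\|\nabla^{(\textd)}\phi_i\|_{\epsilon,2})$ established inside the proof of~\eqref{E:3.16q}, this yields $\langle\wt\phi_i,\wt\phi_j\rangle_{L^2(D)}=\delta_{ij}+O(\epsilon)$ and $\|\nabla\wt\phi_i\|_{L^2(D)}^2=\epsilon^{-2}\|\nabla^{(\textd)}g_i\|_2^2=O(1)$. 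A Gram-Schmidt step then produces an ONS $\psi_1,\dots,\psi_k\in\cmss H^1_0(D)$ satisfying
\begin{equation}
\sum_{i=1}^k\|\nabla\psi_i\|_{L^2(D)}^2=\sum_{i=1}^k\epsilon^{-2}\|\nabla^{(\textd)}g_i\|_2^2+o(1)\text{\quad and\quad}\sum_{i=1}^k\langle U,\psi_i^2\rangle_{L^2(D)}=\sum_{i=1}^k\langle U(\epsilon\cdot),g_i^2\rangle+o(1),
\end{equation}
where the second identity uses the above $L^2$-bound and uniform continuity of $U$ to pass from $\int_D U\wt\phi_i^2$ to the lattice sum. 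Substituting into \eqref{E:3.22b} and invoking \eqref{PP1} gives
\begin{equation}
\label{PP3}
\Lambda_k\le\Lambda_k^\epsilon(\xi)+\sum_{i=1}^k\langle U(\epsilon\cdot)-\xi,g_i^2\rangle+o(1)\quad\text{with probability }1-o(1).
\end{equation}

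The genuine obstacle is to show that the middle term in~\eqref{PP3} is $o(1)$ in probability. Because $g_i$ itself depends on $\xi$, the summands are not independent and concentration must be coaxed out by a different route. The remedy is block averaging at a scale between $\epsilon$ and $1$: choose $L=L(\epsilon)\to\infty$ with $L\epsilon\to 0$ (say $L=\lfloor\epsilon^{-1/2}\rfloor$), partition $\Z^d$ into cubes $B_y$ of side $L$ indexed by $y\in L\Z^d$, and decompose $g_i^2=(g_i^2)_L+[g_i^2-(g_i^2)_L]$ using Lemma~\ref{lemma-3.4new}. The oscillation part is bounded by $\|\xi-U(\epsilon\cdot)\|_\infty\|g_i^2-(g_i^2)_L\|_1$; Lemma~\ref{lemma-3.4new} together with $\|\nabla^{(\textd)}g_i^2\|_1\le 2\|g_i\|_2\|\nabla^{(\textd)}g_i\|_2=O(\epsilon)$ makes this a deterministic $O(L\epsilon)\to 0$. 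The block-constant part reindexes as $\sum_{y\in L\Z^d}g_i^2(y)\,S_y$ with $S_y:=\sum_{x\in B_y\cap D_\epsilon}(\xi(x)-U(\epsilon x))$ mean-zero; the crucial point is that Lemma~\ref{prop4} provides the \emph{deterministic} bound $g_i^2(y)\le\|g_i\|_\infty^2\le C\epsilon^d$, so this piece is dominated by $C\epsilon^d\sum_y|S_y|$. Since $\E\sum_yS_y^2=\sum_x\text{\rm Var}(\xi(x))=O(\epsilon^{-d})$, Markov and Cauchy-Schwarz give $\epsilon^d\sum_y|S_y|=O(L^{-d/4})\to 0$ in probability. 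Plugging this back into \eqref{PP3} proves $\Lambda_k\le\Lambda_k^\epsilon(\xi)+o(1)$ in probability, which is equivalent to the claim.
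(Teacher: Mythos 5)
Your proof follows the paper's architecture step for step: interpolate the discrete eigenfunctions via Lemma~\ref{fe}, Gram--Schmidt into an ONS in $\cmss H^1_0(D)$, feed into the continuum Ky Fan bound \eqref{E:3.22b}, and reduce everything to controlling $\sum_i\langle U(\epsilon\cdot)-\xi,\,(g_{D_\epsilon,\xi}^{\ssup i})^2\rangle$, which you then split into a block-constant and an oscillation part at a mesoscale $L$ using Lemma~\ref{lemma-3.4new}. The only point where you diverge is the probabilistic estimate on the block-constant piece. You bound $|\langle U(\epsilon\cdot)-\xi,((g_i)^2)_L\rangle|$ by $C\epsilon^d\sum_y|S_y|$ using the deterministic $\ell^\infty$ heat-kernel bound of Lemma~\ref{prop4}, then invoke a second-moment/Chebyshev/Cauchy--Schwarz argument; the paper instead introduces the explicit event $F_{L,\epsilon}$ of \eqref{E:3.55} on which every block average of $\xi - U(\epsilon\cdot)$ is below $\delta L^d$, bounds the inner product on that event by $\delta\,\|((g_i)^2)_L\|_1 \le \delta(1+CL\epsilon)$ (Lemma~\ref{lemma-3.4new} again together with $\|g_i\|_2=1$), and bounds $\BbbP_\epsilon(F_{L,\epsilon}^\cc)$ by a Hoeffding-type union bound, choosing $L \asymp \delta/\epsilon$. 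Your variance route is more elementary and needs only second moments, but gives polynomial rather than exponential decay of the error probability; the paper's exponential event $F_{L,\epsilon}$ is reused later in Corollary~\ref{cor3.8}, so your variant would require reproving that step separately. One arithmetic slip: with $N\asymp(\epsilon L)^{-d}$ non-empty blocks, Cauchy--Schwarz yields $\epsilon^d\sum_y|S_y| = O_P\bigl(\epsilon^d N^{1/2}\epsilon^{-d/2}\bigr) = O_P(L^{-d/2})$, not $O_P(L^{-d/4})$ --- still $\to 0$ for any $L\to\infty$, so the conclusion stands. (Also note that the gradient bound $\|\nabla^{(\textd)}g_i\|_2^2 = O(\epsilon^2)$ you extract from Proposition~\ref{lemma-3.4} on a high-probability event is in fact available deterministically, for all $\xi\in\Omega_{a,b}$, via \eqref{E:3.4aa} as the paper records in \eqref{E:3.47}; using it avoids having to condition on the event from the upper-bound proposition.)
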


\begin{proofsect}{Proof}
Let $g_{D_\epsilon,\xi}^{\ssup 1},\dots,g_{D_\epsilon,\xi}^{\ssup k}$ be
 (a choice of) {an ONS of} the first $k$ eigenfunctions of~$H_{D_\epsilon,\xi}$ 
and let $\wt g_{1,\xi}^{\,\epsilon},\dots,\wt g_{k,\xi}^{\,\epsilon}$ denote the continuum interpolations of $\epsilon^{-d/2}g_{D_\epsilon,\xi}^{\ssup 1},\dots,\epsilon^{-d/2}g_{D_\epsilon,\xi}^{\ssup k}$, respectively, as {described in} Lemma~\ref{fe}. The uniform bound \eqref{E:3.4aa} on the eigenvalues ensures
\begin{equation}
\label{E:3.47}
\sup_{\xi\in \Omega_{a,b}}\,\,\sup_{0<\epsilon<1} \epsilon^{-1}\Vert \nabla^{(\textd)}g_{D_\epsilon,\xi}^{\ssup i}\Vert_{2}<\infty
\end{equation}
and so, in light of Lemma~\ref{fe}(4),
\begin{equation}
%\label{}
\sup_{\xi\in\Omega_{a,b}}\Bigl|\langle\wt g_{i,\xi}^{\,\epsilon},\wt g_{j,\xi}^{\,\epsilon}\rangle_{L^2(\R^d)}-\delta_{ij}\Bigr|\,\underset{\epsilon\downarrow0}\longrightarrow\,0.
\end{equation}
Invoking again the Gram-Schmidt orthogonalization, we can thus find functions $\wt h_{1,\xi}^\epsilon,\dots,\wt h_{k,\xi}^\epsilon$ and coefficients $a_{ij}(\xi,\epsilon)$ such that
\begin{equation}
\label{E:3.45}
\wt h_{i,\xi}^\epsilon=\sum_{j=1}^k\bigl(\delta_{ij}+a_{ij}(\xi,\epsilon)\bigr)\wt g_{i,\xi}^{\,\epsilon},\qquad i=1,\dots,k,
\end{equation}
for which
\begin{equation}
\label{E:3.46}
\bigl\langle \wt h_{i,\xi}^\epsilon,{\wt h}_{j,\xi}^\epsilon\bigr\rangle_{L^2(\R^d)}=\delta_{ij}
\qquad\text{and}\qquad\max_{ij}\sup_{{\xi}\in\Omega_{a,b}}\,\bigl|a_{ij}(\xi,\epsilon)\bigr|\,\underset{\epsilon\downarrow0}\longrightarrow\,0.
\end{equation}
Thanks to the definition of~$D_\epsilon$, both the $\wt g_{i,\xi}^{\,\epsilon}$'s and $\wt h_{i,\xi}^{\epsilon}$'s are supported in~$D$.

Lemma~\ref{fe}(5), {\eqref{E:3.47}} and \twoeqref{E:3.45}{E:3.46} guarantee
\begin{equation}
%\label{}
\sup_{\xi\in\Omega_{a,b}}\,\Bigl|\,\Vert \nabla \wt h_{i,\xi}^{\epsilon}\Vert_{L^2(\R^d)}^2-\epsilon^{-2}\Vert \nabla^{(\textd)}g_{D_\epsilon,\xi}^{\ssup i}\Vert_{2}^2
\Bigr|\,\underset{\epsilon\downarrow0}\longrightarrow\,0
\end{equation}
while \eqref{E:3.14uu} ensures
\begin{equation}
%\label{}
\sup_{\xi\in\Omega_{a,b}}\,\Bigl|\bigl\langle U,(\wt h_{i,\xi}^{\epsilon})^2\bigr\rangle_{L^2({\R^d})}
-\bigl\langle U(\epsilon\cdot),(g_{D_\epsilon,\xi}^{\ssup i})^2\bigl\rangle\Bigr|\,\underset{\epsilon\downarrow0}\longrightarrow\,0.
\end{equation}
Once both suprema on the left are less than some $\delta>0$, using the $\wt h_{i,\xi}^{\epsilon}$ as the $\psi_i$'s in \eqref{E:3.22b} and noting that the $g_{D_\epsilon,\xi}^{\ssup i}$'s achieve the infimum in \eqref{E:3.22}, yields
\begin{equation}
%\label{}
\Lambda_k\le\Lambda_k^\epsilon(\xi)+2k\delta+\sum_{i=1}^k\bigl\langle U(\epsilon\cdot)-\xi,(g_{D_\epsilon,\xi}^{\ssup i})^2\bigr\rangle.
\end{equation}
Now consider the piece-wise constant approximation $f_L(x)=f(L\lfloor x/L\rfloor)$ to the function $f(x):=(g_{D_\epsilon,\xi}^{\ssup i}(x))^2$. Since $\Vert \nabla^{(\textd)} (g^2)\Vert_1\le C(d)\Vert g\Vert_2\Vert\nabla^{(\textd)} g\Vert_2$, Lemma~\ref{lemma-3.4new}, \eqref{E:3.47} and the boundedness of~$U-\xi$ give
\begin{equation}
%\label{}
\bigl\langle U(\epsilon\cdot)-\xi,(g_{D_\epsilon,\xi}^{\ssup i})^2\bigr\rangle
\le \bigl\langle U(\epsilon\cdot)-\xi,((g_{D_\epsilon,\xi}^{\ssup i})^2)_L\bigr\rangle
+CL\epsilon
\end{equation}
for some~$C$ independent of~$\xi$. Setting $B_L(x):=Lx+\{0,\dots,L-1\}^d$, on the event
\begin{equation}
\label{E:3.55}
F_{L,\epsilon}:=\bigcap_{\begin{subarray}{c}
x\in (L\Z)^d\\B_L(x)\cap D_\epsilon\ne\emptyset
\end{subarray}}
\biggl\{\xi\colon\Bigl|\,\sum_{z\in B_L(x)}U({z}\epsilon)-\xi({z})\Bigr|<\delta L^d\biggr\}
\end{equation}
we in turn have
\begin{equation}
%\label{}
\bigl\langle U(\epsilon\cdot)-\xi,((g_{D_\epsilon,\xi}^{\ssup i})^2)_L\bigr\rangle
\le\delta{(1+CL\epsilon)},
\end{equation}
again by Lemma~\ref{lemma-3.4new}. Assuming that $CL\epsilon\le\delta$, we thus get
\begin{equation}
%\label{}
\BbbP\bigl(\Lambda_k\ge\Lambda_k^\epsilon(\xi)+5k\delta\bigr)\le \BbbP_\epsilon(F_{L,\epsilon}^\cc).
\end{equation}
A standard large-deviation estimate bounds $\BbbP_\epsilon(F_{L,\epsilon}^\cc)\le c(\epsilon L)^{-d}\texte^{-cL^d}$. Choosing, e.g., $L=c\delta/\epsilon$ for some~$c$ sufficiently small, the claim follows.
\end{proofsect}

We are now ready to conclude:

\begin{proofsect}{Proof of Theorem~\ref{thm1.1}}
By Propositions~\ref{lemma-3.4} and~\ref{lemma3.5} we have\begin{equation}
%\label{}
\Lambda_k^\epsilon(\xi)\,\underset{\epsilon\downarrow0}{\overset{\BbbP}\longrightarrow}\,\Lambda_k,\qquad k\ge1.
\end{equation}
Then
\begin{equation}
%\label{}
\lambda_{D_\epsilon,\xi}^{\ssup k}=
\Lambda_k^\epsilon(\xi)-\Lambda_{k-1}^\epsilon(\xi)
\,\underset{\epsilon\downarrow0}{\overset{\BbbP}\longrightarrow}\,\Lambda_k-\Lambda_{k-1}=\lambda_D^{\ssup k}
\end{equation}
for all $k\ge1$ as well.
\end{proofsect}

The proof of Proposition~\ref{lemma3.5} gives us the following additional fact:

\begin{corollary}
\label{cor3.8}
Given any choice of $\xi\mapsto g_{D_\epsilon,\xi}^{\ssup 1}, \dots,
 g_{D_\epsilon,\xi}^{\ssup k}$, let $\wt
 g_{1,\xi}^{\,\epsilon},\dots,\wt g_{k,\xi}^{\,\epsilon}$ denote the
 continuum interpolations of $\epsilon^{-d/2}g_{D_\epsilon,\xi}^{\ssup
 1},\dots,\epsilon^{-d/2}g_{D_\epsilon,\xi}^{\ssup k}$ as constructed in
 Lemma~\ref{fe}. Assume $\lambda_D^{\ssup{k+1}}>\lambda_D^{\ssup k}$
 and let~$\hat\Pi_k$ denote the orthogonal projection on
 $\{\varphi_D^{\ssup1},\dots,\varphi_D^{\ssup
 k}\}^\perp$. 
Then, for any $\delta'>0$, there is an event~$E_{k,\epsilon,{\delta'}}$ such that
\begin{equation}
\label{E:3.60}
\biggl\{\,\xi\colon\sum_{i=1}^k\Vert\hat\Pi_k\wt g_{i,\xi}^{\,\epsilon}\Vert_{L^2({\R^d})}>{\delta'}\biggr\}\subseteq E_{k,\epsilon,{\delta'}}\qquad\text{and}\qquad\lim_{\epsilon\downarrow0}\,\BbbP_\epsilon(E_{k,\epsilon,{\delta'}})=0.
\end{equation}
\end{corollary}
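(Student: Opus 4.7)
The plan is to apply the quantitative Ky Fan principle (Lemma~\ref{lemma-projections}) to the operator $\hat H:=-\Delta+U(x)$ on $\cmss H_0^1(D)$, using the Gram-Schmidt orthonormalized interpolations $\wt h_{i,\xi}^{\epsilon}$ from the proof of Proposition~\ref{lemma3.5} as the test ONS. Since $\lambda_D^{\ssup{k+1}}>\lambda_D^{\ssup k}$, Lemma~\ref{lemma-projections} yields
\begin{equation}
\bigl(\lambda_D^{\ssup{k+1}}-\lambda_D^{\ssup k}\bigr)\sum_{i=1}^k\bigl\Vert\hat\Pi_k\wt h_{i,\xi}^{\epsilon}\bigr\Vert_{L^2(\R^d)}^2
\le\sum_{i=1}^k\langle\wt h_{i,\xi}^{\epsilon},\hat H\wt h_{i,\xi}^{\epsilon}\rangle_{L^2(\R^d)}-\Lambda_k,
\end{equation}
so it suffices to show that the right-hand side is $o_{\BbbP}(1)$.

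To bound the right-hand side, I would re-use the estimates in the proof of Proposition~\ref{lemma3.5}. There it is shown that, uniformly in $\xi\in\Omega_{a,b}$, the Dirichlet form and potential energy of $\wt h_{i,\xi}^{\epsilon}$ are within $o(1)$ of their discrete counterparts for $g^{\ssup i}_{D_\epsilon,\xi}$, and that on the large-deviation event $F_{L,\epsilon}$ of \eqref{E:3.55} (with $L=c\delta/\epsilon$ and $\delta>0$ arbitrary) one has
\begin{equation}
\sum_{i=1}^k\langle\wt h_{i,\xi}^{\epsilon},\hat H\wt h_{i,\xi}^{\epsilon}\rangle_{L^2(\R^d)}\le \Lambda_k^\epsilon(\xi)+C(k)\delta.
\end{equation}
Combining this with Proposition~\ref{lemma-3.4}, which gives $\Lambda_k^\epsilon(\xi)\le\Lambda_k+\delta$ with probability $1-o(1)$, we obtain on the intersection a bound of $C'(k)\delta$ on the right-hand side in the Ky Fan inequality above. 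Given $\delta'>0$, choose $\delta$ so small that $C'(k)\delta/(\lambda_D^{\ssup{k+1}}-\lambda_D^{\ssup k})<(\delta'/(2k))^2$, and define $E_{k,\epsilon,\delta'}$ to be the complement of the event on which both estimates hold; then $\BbbP_\epsilon(E_{k,\epsilon,\delta'})\to0$ as $\epsilon\downarrow0$, and off $E_{k,\epsilon,\delta'}$ the Cauchy-Schwarz inequality yields $\sum_{i=1}^k\Vert\hat\Pi_k\wt h_{i,\xi}^{\epsilon}\Vert_{L^2(\R^d)}<\delta'/2$.

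The final step is to transfer this bound from $\wt h_{i,\xi}^{\epsilon}$ back to $\wt g_{i,\xi}^{\epsilon}$. By the Gram-Schmidt relation \eqref{E:3.45}, $\wt g_{i,\xi}^{\epsilon}$ is a linear combination of $\wt h_{j,\xi}^{\epsilon}$ with coefficients that are $\delta_{ij}+O(a_{ij}(\xi,\epsilon))$, where $\sup_\xi|a_{ij}(\xi,\epsilon)|\to 0$ by \eqref{E:3.46}. Since $\Vert\hat\Pi_k\Vert\le 1$ and each $\wt g_{i,\xi}^{\epsilon}$ has bounded $L^2$ norm, this linear-algebraic comparison shows that $\sum_i\Vert\hat\Pi_k\wt g_{i,\xi}^{\epsilon}\Vert_{L^2(\R^d)}<\delta'$ once $\epsilon$ is small enough, after possibly enlarging $E_{k,\epsilon,\delta'}$ by a null-limit set. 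The main obstacle is the bookkeeping required to make the Ky Fan inequality compatible with the fact that $\wt h_{i,\xi}^\epsilon$, not $\wt g_{i,\xi}^\epsilon$, is the genuine ONS in $L^2(\R^d)$; this is handled by the Gram-Schmidt comparison with uniformly small coefficients.
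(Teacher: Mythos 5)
Your proposal is correct and follows essentially the same route as the paper: apply the quantitative Ky Fan bound (Lemma~\ref{lemma-projections}) to the orthonormalized interpolations $\wt h_{i,\xi}^{\epsilon}$, control the energy surplus $\sum_i\langle\wt h_i,\hat H\wt h_i\rangle-\Lambda_k$ using the discretization estimates from the proof of Proposition~\ref{lemma3.5} together with the large-deviation event $F_{L,\epsilon}$, and finally transfer from $\wt h_{i,\xi}^{\epsilon}$ to $\wt g_{i,\xi}^{\,\epsilon}$ via the Gram--Schmidt identities \twoeqref{E:3.45}{E:3.46}. One point where you are in fact slightly more careful than the printed proof: you explicitly invoke Proposition~\ref{lemma-3.4} to bound $\Lambda_k^\epsilon(\xi)\le\Lambda_k+\delta$ with high probability; the estimates in the proof of Proposition~\ref{lemma3.5} alone only give $\sum_i\langle\wt h_i,\hat H\wt h_i\rangle\le\Lambda_k^\epsilon(\xi)+C(k)\delta$ on $F_{L,\epsilon}$, so the additional high-probability bound on $\Lambda_k^\epsilon(\xi)$ is genuinely needed, and $E_{k,\epsilon,\delta'}$ should really be the union of $F_{L,\epsilon}^\cc$ with the complement of that event, as you define it. A small notational remark: \eqref{E:3.45} expresses $\wt h_{i,\xi}^{\epsilon}$ as a near-identity linear combination of the $\wt g_{j,\xi}^{\,\epsilon}$'s, so the passage to the $\wt g_i$'s uses the \emph{inverse} of the matrix $I+a(\xi,\epsilon)$, which is close to the identity uniformly in $\xi$ once $\epsilon$ is small; your conclusion is nonetheless correct.
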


\begin{proofsect}{Proof}
An inspection of the proof of Proposition~\ref{lemma3.5} reveals that
\begin{equation}
%\label{}
\left\{\xi:\sum_{i=1}^k\Bigl(\Vert \nabla \wt h_{i,\xi}^{\epsilon}\Vert_{L^2(\R^d)}^2+\bigl\langle U,(\wt h_{i,\xi}^{\epsilon})^2\bigr\rangle_{L^2({\R^d})}\Bigr)
\ge \Lambda_k-{\delta'}\right\}
\end{equation}
is a subset of the event $E_{k,\epsilon,{\delta'}}:=F_{L,\epsilon}^\cc$, where~$F_{L,\epsilon}$ is the event in  \eqref{E:3.55} with proper choices of~$\delta$ and~$L$. 
Thanks to Lemma~\ref{lemma-projections}, the inclusion in \eqref{E:3.60} thus holds for $\wt h_{i,\xi}^{\epsilon}$ instead of $\wt g_{i,\xi}^{\,\epsilon}$.
Adjusting~$\delta$ slightly, the identities \twoeqref{E:3.45}{E:3.46} then yield the same for the $\wt g_{i,\xi}^{\,\epsilon}$'s.
\end{proofsect}

\begin{remark}
Note that under the assumption $\lambda_D^{\ssup{k+1}}>\lambda_D^{\ssup k}$
the space $\{\varphi_D^{\ssup1},\dots,\varphi_D^{\ssup k}\}^\perp$, {and thus also the projection $\hat\Pi_k$}, is  independent of the choice of the {eigenfunction} basis. 
The formulation \eqref{E:3.60} avoids {having to deal with} questions about the {measurability} of eigenfunctions and/or the Hilbert-space projections.
\end{remark}

\section{Concentration estimate}
\label{sec4}\noindent
We now move to the proof of a concentration estimate for eigenfunctions around their mean. The proof actually boils down to a well-known concentration inequality due to Talagrand that we recast into a form adapted to our needs: 

\begin{theorem}[Theorem~6.6 of \cite{Tal96}]
\label{thm-Talagrand}
Let $N\in \N$ and let $|\cdot|_2$ denote the Euclidean norm on~$\R^N$. Let $f\colon [-1,1]^N\to \R$ be concave and Lipschitz continuous with
\begin{equation}
\label{E:3.19}
L:=\sup_{\xi,\eta\in[-1,1]^N}\frac{|f(\xi)-f(\eta)|}{|\xi-\eta|_2}<\infty.
\end{equation}
Then for any product probability measure $P$ on $[-1,1]^N$ and any~$t>0$,
\begin{equation}
P\bigl(|f-\text{\rm med}(f)|>t\bigr)\le 4\exp\left\{-\frac{t^2}{16L^2}\right\},
\end{equation}
where $\text{\rm med}(f)$ denotes the median of $f$.
\end{theorem}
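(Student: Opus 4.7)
The approach is to deduce the bound from Talagrand's convex distance inequality, applied to the convex sublevel sets of $f$. First, by replacing $f$ with $-f$ (which preserves both $L$ and the quantity $|f-\text{med}(f)|$), I would reduce to the case that $f$ is \emph{convex}, and by rescaling assume $L=1$. The key structural consequence is that every sublevel set $\{f\le c\}$ is then convex.

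The core input is Talagrand's isoperimetric inequality for product probability measures on the cube $[-1,1]^N$: for any measurable $A$,
\begin{equation*}
P(A)\,\int\exp\!\Bigl(\tfrac{1}{16}d(x,\mathrm{conv}(A))^2\Bigr)\,dP(x)\le 1,
\end{equation*}
where $d$ is the Euclidean distance (the constant $16$ arising because each coordinate has range of length~$2$). Setting $m:=\text{med}(f)$ and $A:=\{f\le m\}$, convexity of $f$ makes $A$ convex with $P(A)\ge 1/2$. For any $x$ with $f(x)>m+t$, the $1$-Lipschitz bound yields $|x-y|_2\ge f(x)-f(y)\ge t$ for every $y\in A$, hence $d(x,A)\ge t$. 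Markov's inequality then gives $P(f>m+t)\le 2e^{-t^2/16}$.

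For the lower tail, I would apply the same inequality to the convex set $B:=\{f\le m-t\}$: for any $x$ with $f(x)\ge m$, the Lipschitz bound forces $d(x,B)\ge t$, so restricting the integral on the left to the set $\{f\ge m\}$ (whose mass is at least $1/2$) and rearranging produces $P(f\le m-t)\le 2 e^{-t^2/16}$. Summing the two tail bounds and restoring $L$ yields the claimed estimate $4\exp(-t^2/(16L^2))$.

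The principal obstacle is the convex distance inequality itself, whose proof proceeds by induction on the dimension $N$, based on a sharp one-dimensional entropy-type estimate controlling the single-coordinate contribution. I would quote this directly from \cite{Tal96} rather than reprove it; once it is in hand, the remainder of the argument above is a straightforward combination of Lipschitz continuity with convexity of sublevel sets.
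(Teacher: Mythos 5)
The paper does not prove this statement; it is quoted verbatim from Talagrand~\cite{Tal96} (Theorem~6.6 there) and used as a black box, so there is no internal argument to compare your proposal against. Your derivation is nevertheless correct and is essentially the standard one. After replacing $f$ by $-f$ to make it convex and normalizing $L=1$, the inequality $P(A)\int\exp\bigl(d_E(x,A)^2/16\bigr)\,dP\le1$ for convex $A\subseteq[-1,1]^N$ follows from Talagrand's convex-distance inequality: on $[0,1]^N$ one has $d_c(x,A)\ge d_E(x,A)$ for convex $A$, and the affine identification with $[-1,1]^N$ leaves $d_c$ unchanged while doubling $d_E$, which is exactly the source of your constant~$16$. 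Taking $A=\{f\le m\}$, of measure at least $\tfrac12$ since $m$ is a median, and noting that $1$-Lipschitzness forces $d_E(x,A)\ge t$ whenever $f(x)>m+t$, Markov gives $P(f>m+t)\le2e^{-t^2/16}$; the lower tail follows symmetrically from $B=\{f\le m-t\}$ upon restricting the integral to $\{f\ge m\}$, which also has mass $\ge\tfrac12$. Summing and rescaling by $L$ yields $4e^{-t^2/(16L^2)}$. The one thing worth flagging is that your argument transfers all of the difficulty to the convex-distance inequality itself, which you quote rather than prove --- that is appropriate here, since it is the central isoperimetric result of~\cite{Tal96}, but you should state precisely which theorem you are invoking and in which normalization, as the entire content of the statement resides there.
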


\begin{proofsect}{Proof of Theorem~\ref{thm-1.2a}}
We will first prove concentration for the quantity $\Lambda_k^\epsilon(\xi)$ and then extract the desired statement from it.
In light of Theorem~\ref{thm-Talagrand}, it suffices to derive a good bound on the Lipschitz constant for~$f(\xi):=\Lambda_k^\epsilon(\xi)$. Fix~$\xi$ and let $\{g_{D_\epsilon,\xi}^{\ssup i}\colon i=1,\dots, k\}$ be a set of eigenfunctions satisfying \eqref{E:3.1} that achieve the corresponding eigenvalues $\{\lambda_{D_\epsilon,\xi}^{\ssup i}\colon i=1,\dots,k\}$, respectively. For any~$\eta$, the variational characterization \eqref{E:3.22} of~$\Lambda_k^\epsilon(\xi)$ yields
\begin{equation}
\label{E:3.23}
\Lambda_k^\epsilon(\xi)-\Lambda_k^\epsilon(\eta)\le\sum_{x\in D_\epsilon}\bigl(\xi(x)-\eta(x)\bigr)\sum_{j=1}^k\bigl|g_{D_\epsilon,{\eta}}^{\ssup j}(x)\bigr|^2.
\end{equation}
Peeling off the sum over~$j$ and applying the Cauchy-Schwarz inequality, we obtain
\begin{equation}
\label{E:3.24}
\Lambda_k^\epsilon(\xi)-\Lambda_k^\epsilon(\eta)\le
|\xi-\eta|_2\,\sum_{j=1}^k\,\biggl(\,\,\sum_{x\in D_\epsilon}\bigl|g_{D_\epsilon,{\eta}}^{\ssup j}(x)\bigr|^4\biggr)^{1/2}.
\end{equation}
But Lemma~\ref{prop4} ensures that $|g_{D_\epsilon,{\eta}}^{\ssup j}(x)|\le c\epsilon^{d/2}$, and the normalization convention \eqref{E:3.1} then gives
\begin{equation}
\label{E:4.5u}
\Lambda_k^\epsilon(\xi)-\Lambda_k^\epsilon(\eta)\le kc\epsilon^{d/2}\,|\xi-\eta|_2.
\end{equation}
Since this is valid for all~$\eta,\xi$, the same estimate applies to $|\Lambda_k^\epsilon(\xi)-\Lambda_k^\epsilon(\eta)|$ as well.

Now fix~$t>0$. Talagrand's inequality readily yields
\begin{equation}
\BbbP_\epsilon\bigl(|\Lambda_k^\epsilon-\text{med}(\Lambda_k^\epsilon)|>t\bigr)
\le {4} \exp\bigl\{-ct^2\epsilon^{-d}\bigr\}.
\end{equation}
But that implies the same bound also for
$\text{med}(\Lambda_k^\epsilon)$ replaced by $\E\Lambda_k^\epsilon$. Since $\Lambda_k^\epsilon(\xi)$ is the sum of the first~$k$ eigenvalues, the desired inequality for a single eigenvalue follows by considering the differences $\Lambda_k^\epsilon(\xi)-\Lambda_{k-1}^\epsilon(\xi)$.
\end{proofsect}

{\begin{remark}
We note that, thanks to pointwise boundedness of the support of~$\xi$ and the Lipschitz property of the eigenfunction, the proof could equally well be based on Azuma's inequality.
\end{remark}
}

For later purposes we restate the concentration bound in a slightly different form:

\begin{lemma}
\label{concentration}
Let~$k\ge1$. There is a constant $c>0$ such that for any
$t>0$,
\begin{equation}
\label{E:4.7eq}
\max_{x\in D_\epsilon}\,\BbbP_\epsilon\left(\sup_{\xi(x)\in[a,b]}
\bigl|\lambda_{D_\epsilon,\xi}^{\ssup k}-
\lambda_D^{\ssup k}\bigr|>t\right)
\le {4}\exp\left\{-c t^2\epsilon^{-d}\right\}
\end{equation}
holds for all sufficiently small $\epsilon$. 
\end{lemma}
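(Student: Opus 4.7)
The plan is to derive~\eqref{E:4.7eq} as a quick corollary of Theorem~\ref{thm-1.2a} and Theorem~\ref{thm1.1}, via a rank-one perturbation estimate that controls the dependence on a single coordinate $\xi(x)$, and an upgrade of convergence in probability to convergence in expectation. The key quantitative input from earlier in the paper is the uniform $\ell^\infty$-bound on eigenfunctions from Lemma~\ref{prop4}, which forces the single-coordinate Lipschitz constant of $\xi\mapsto\lambda^{\ssup k}_{D_\epsilon,\xi}$ to be of order $\epsilon^d$---much smaller than the fluctuation scale $\epsilon^{d/2}$.

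For the rank-one step, if $\xi,\eta\in\Omega_{a,b}$ agree off a single site $x$, the variational bound~\eqref{E:3.23} used in the proof of Theorem~\ref{thm-1.2a} gives
\begin{equation*}
\bigl|\Lambda_k^\epsilon(\xi)-\Lambda_k^\epsilon(\eta)\bigr|
\le (b-a)\sum_{j=1}^k\bigl|g_{D_\epsilon,\eta}^{\ssup j}(x)\bigr|^2
\le C_k\,\epsilon^d,
\end{equation*}
where the last inequality is Lemma~\ref{prop4}. Writing $\lambda^{\ssup k}=\Lambda_k-\Lambda_{k-1}$ yields the analogous estimate $|\lambda^{\ssup k}_{D_\epsilon,\xi}-\lambda^{\ssup k}_{D_\epsilon,\eta}|\le C_k'\,\epsilon^d$ for a constant $C_k'=C_k'(k,a,b,D)$ that is uniform in $x$ and in the configuration off $x$. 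Consequently, for every realization of $\xi$,
\begin{equation*}
\sup_{\xi(x)\in[a,b]}\bigl|\lambda^{\ssup k}_{D_\epsilon,\xi}-\lambda^{\ssup k}_D\bigr|
\le \bigl|\lambda^{\ssup k}_{D_\epsilon,\xi}-\E\lambda^{\ssup k}_{D_\epsilon,\xi}\bigr|
+\bigl|\E\lambda^{\ssup k}_{D_\epsilon,\xi}-\lambda^{\ssup k}_D\bigr|+C_k'\epsilon^d.
\end{equation*}

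The middle term on the right is handled by upgrading Theorem~\ref{thm1.1}: combining convergence in probability with the uniform bound \eqref{E:3.4aa} and the standard boundedness of $\lambda^{\ssup k}_{D_\epsilon,0}$ (the $k$-th eigenvalue of the rescaled discrete Dirichlet Laplacian on $D_\epsilon$, which converges to the $k$-th Dirichlet eigenvalue of $-\Delta$ on~$D$) gives uniform integrability, and hence $\E\lambda^{\ssup k}_{D_\epsilon,\xi}\to\lambda^{\ssup k}_D$. Given $t>0$, I would then pick $\epsilon_0(t)$ so small that, for $\epsilon<\epsilon_0$, both $C_k'\epsilon^d\le t/3$ and $|\E\lambda^{\ssup k}_{D_\epsilon,\xi}-\lambda^{\ssup k}_D|\le t/3$. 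The event in~\eqref{E:4.7eq} is then contained in $\{|\lambda^{\ssup k}_{D_\epsilon,\xi}-\E\lambda^{\ssup k}_{D_\epsilon,\xi}|>t/3\}$, and~\eqref{E:4.7eq} follows from Theorem~\ref{thm-1.2a} with the constant $c/9$ in place of $c$. There is no real obstacle here; the whole argument amounts to bookkeeping on top of results already established earlier in the paper.
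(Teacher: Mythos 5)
Your argument is correct and is essentially the same as the paper's: decompose $\sup_{\xi(x)}|\lambda^{\ssup k}_{D_\epsilon,\xi}-\lambda^{\ssup k}_D|$ by the triangle inequality into a fluctuation term (handled by Theorem~\ref{thm-1.2a}), a mean-bias term (handled by upgrading the convergence in probability of Theorem~\ref{thm1.1} to convergence in expectation via uniform boundedness), and a single-site Lipschitz term; then absorb the two deterministic errors for $\epsilon$ small. The one cosmetic difference is that the paper obtains the single-coordinate Lipschitz bound $c\epsilon^{d/2}$ by specializing the $\ell^2$-Lipschitz estimate \eqref{E:4.5u}, whereas you go back to \eqref{E:3.23} directly and get the sharper $C_k'\epsilon^d$; both are more than sufficient since the term only needs to be $o(1)$ for fixed $t$. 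Your $t/3+t/3+t/3$ bookkeeping is in fact a little cleaner than the paper's ($2t/3$ for the mean-bias and $t/3$ for the Lipschitz term, which does not leave room for the fluctuation), but this is a harmless constants issue in the original.
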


\begin{proofsect}{Proof}
Let~$t>0$ be fixed. From Theorem~\ref{thm-1.2a} we know that $\lambda_{D_\epsilon,\xi}^{\ssup k}\to\lambda_D^{\ssup k}$ in probability. Since the eigenvalues are uniformly bounded, this implies
\begin{equation}
%\label{}
\bigl|\E\lambda_{D_\epsilon,\xi}^{\ssup k}-\lambda_D^{\ssup k}\bigr|<\frac23\,t
\end{equation}
for~$\epsilon>0$ sufficiently small. Moreover, \eqref{E:4.5u} gives 
\begin{equation}
%\label{}
\sup_{\begin{subarray}{c}
\xi(y)=\eta(y)\\\forall y\ne x
\end{subarray}}
\bigl|\lambda_{D_\epsilon,\xi}^{\ssup k}-\lambda_{D_\epsilon,\eta}^{\ssup k}\bigr|\le c\epsilon^{d/2}<\frac13 \,t,
\end{equation}
once~$\epsilon$ is sufficiently small. 
Hence, the probability in \eqref{E:4.7eq} is bounded by the probability that~$\lambda_{D_\epsilon,\xi}^{\ssup k}$ deviates from its mean by more than $t/3$. This is estimated using Theorem~\ref{thm-1.2a}.
\end{proofsect}

\section{Gaussian limit law}
\nopagebreak\label{sec5}\noindent
We are now finally ready to address the main aspect of this work, which is the limit theorem for fluctuations of asymptotically non-degenerate eigenvalues.
%
%\subsection{Overall strategy}
The main idea is quite simple and is inspired by the recent work on fluctuations of effective conductivity in the random conductance model (Biskup, Salvi and Wolff~\cite{BSW}). Consider an ordering of the vertices in~$D_\epsilon$ into a sequence $x_1,\dots,x_{|D_\epsilon|}$ and let $\FF_m:=\sigma(\xi({x_1}),\dots,\xi({x_m}))$. Then
\begin{equation}
%\label{}
\lambda_{D_\epsilon,\xi}^{\ssup{k}}-\E \lambda_{D_\epsilon,\xi}^{\ssup{k}}
=\sum_{m=1}^{|D_\epsilon|}
\Bigl(\,\E\bigl(\lambda_{D_\epsilon,\xi}^{\ssup{k}}\big|\FF_m\bigr)-\E\bigl(\lambda_{D_\epsilon,\xi}^{\ssup{k}}\big|\FF_{m-1}\bigr)\Bigr)
\end{equation}
represents the fluctuation of the $k$-th eigenvalue as a martingale. We may then apply the Martingale Central Limit Theorem due to Brown~\cite{Brown} which asserts that a family
\begin{equation}
%\label{}
\bigl\{(M_m^\epsilon,\FF_m)\colon m=0,\dots,n(\epsilon)\bigr\}
\end{equation}
of square-integrable $\R^\nu$-valued martingales such that
%\settowidth{\leftmargini}{(11)}
\begin{enumerate}
\item[(0)] $M_0^\epsilon=0$ and $n(\epsilon)\to\infty$ as $\epsilon\downarrow0$,
\item[(1)] there is a finite $\nu$-dimensional square matrix $\sigma^2=\{\sigma_{ij}^2\}$ for which
\begin{equation}
\epsilon^{-d}\sum_{m=1}^{n(\epsilon)}E\bigl((M_m^\epsilon-M_{m-1}^\epsilon)(M_m^\epsilon-M_{m-1}^\epsilon)^{\text{T}}\big|\FF_{m-1}\bigr)
\,\,\underset{\epsilon\downarrow0}{\overset{\BbbP}\longrightarrow}\,\,\sigma^2,
\end{equation}
\item[(2)] for each $\delta>0$,
\begin{equation}
\epsilon^{-d}\sum_{m=1}^{n(\epsilon)}E\bigl(|M_m^\epsilon-M_{m-1}^\epsilon|^2\1_{\{|M_m^\epsilon-M_{m-1}^\epsilon|>\delta\epsilon^{d/2}\}}\big|\FF_{m-1}\bigr)
\,\,\underset{\epsilon\downarrow0}{\overset{\BbbP}\longrightarrow}\,\,0,
\end{equation}
\end{enumerate}
satisfies
\begin{equation}
%\label{}
\epsilon^{-d/2}M_{n(\epsilon)}^\epsilon\,\,\underset{\epsilon\downarrow0}{\overset{\text{\rm law}}\longrightarrow}\,\,\NN(0,\sigma^2).
\end{equation}
The proof of Theorem~\ref{thm1.2} thus reduces to verification of the premises (0-2) of this result for 
\begin{equation}
%\label{}
M_n^\epsilon:=\sum_{m=1}^n
\Bigl(\,\E\bigl(\lambda_{D_\epsilon,\xi}^{\ssup{k}}\big|\FF_m\bigr)-\E\bigl(\lambda_{D_\epsilon,\xi}^{\ssup{k}}\big|\FF_{m-1}\bigr)\Bigr)
\end{equation}
and $n(\epsilon):=|D_\epsilon|$.

The condition (0) is checked immediately, but the control of the limits in (1) and (2) will require a more explicit expression for the martingale differences. Here we note that, for any function $f=f(\xi_1,\dots,\xi_n)$ on~$\R^{n}$ that is absolutely continuous in each variable and for any collection $\xi_1,\dots,\xi_n$ of bounded independent random variables we have, {for} $\FF_m:=\sigma(\xi_1,\dots,\xi_m)$,
\begin{equation}
\label{E:2.2}
\E(f|\FF_m)-\E(f|\FF_{m-1})=\widehat\E\int_{\widehat\xi_m}^{\xi_m}\frac{\partial f}{\partial\xi_m}(\xi_1,\dots,\xi_{m-1},\tilde\xi,\widehat\xi_{m+1},\dots,\widehat\xi_n)\,\textd\tilde\xi,
\end{equation}
where the expectation is over the collection of random variables $\widehat\xi$, which are copies of~$\xi$ independent of~$\xi$. The integral is in the sense of Riemann, and we use the corresponding notation to explicate the sign change upon exchanging the limits of integration. To validate the condition of absolute continuity (and justify the use of the Fundamental Theorem of Calculus), we prove:

\begin{lemma}
\label{lemma-5.1}
The function $\xi\mapsto\lambda_{D_\epsilon,\xi}^{\ssup{k}}$ is everywhere right and left differentiable {with respect to} each~$\xi(x)$. The set of points where the two derivatives disagree is at most countably infinite; else the derivative exists and is continuous in~$\xi(x)$. The partial derivatives $\frac\partial{\partial\xi(x)^\pm}\lambda_{D_\epsilon,\xi}^{\ssup{k}}$ are bounded and, except at countably many values of~$\xi(x)$,
\begin{equation}
\label{E:2.3}
\frac\partial{\partial\xi(x)}\lambda_{D_\epsilon,\xi}^{\ssup{k}}=\bigl|g_{D_\epsilon,\xi}^{\ssup k}(x)\bigr|^2
\end{equation}
for any possible choice of $g_{D_\epsilon,\xi}^{\ssup k}$. (I.e., all choices give the same result.) 
\end{lemma}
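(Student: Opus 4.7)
The plan is to fix $x\in D_\epsilon$ and all values $\{\xi(y)\colon y\neq x\}$, and to study the map $t:=\xi(x)\mapsto \lambda_{D_\epsilon,\xi}^{\ssup k}$ as the $k$-th ordered eigenvalue of the one-parameter family
\begin{equation}
H(t)=H_0+t\,P_x,
\end{equation}
where $H_0$ is $H_{D_\epsilon,\xi}$ with $\xi(x)$ replaced by $0$ and $P_x$ is the rank-one projection $f\mapsto f(x)\delta_x$. This family is affine (and thus real-analytic) in $t$, which brings two classical tools into play: the Ky Fan minimax representation \eqref{E:3.22} of the partial sums $\Lambda_k^\epsilon$, and the Rellich--Kato parametrisation of eigenvalues and eigenvectors of an analytic self-adjoint family.

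For the existence of one-sided derivatives, I would argue via Ky Fan. Substituting $H(t)$ into \eqref{E:3.22} exhibits $\Lambda_k^\epsilon$ as an infimum over ONSs of expressions that are \emph{affine} in $t$ (the kinetic term is independent of $\xi(x)$, and $\langle \xi,h_i^2\rangle$ depends on $t$ through the single linear term $t\,|h_i(x)|^2$); hence $t\mapsto\Lambda_k^\epsilon$ is concave. A concave function on an interval admits left and right derivatives everywhere, and the two agree off an at most countable set; where they agree, continuity of the derivative follows automatically from monotonicity of the one-sided derivatives. Writing $\lambda_{D_\epsilon,\xi}^{\ssup k}=\Lambda_k^\epsilon-\Lambda_{k-1}^\epsilon$ propagates these properties to the individual eigenvalue, the disagreement set being the union of the two countable kink sets. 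Boundedness of the one-sided derivatives is immediate from the Lipschitz estimate \eqref{E:4.5u}.

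For the formula \eqref{E:2.3}, I would invoke the Rellich--Kato theorem for analytic perturbations of self-adjoint matrices: there exist globally real-analytic branches $\mu_1(t),\dots,\mu_{|D_\epsilon|}(t)$ of the eigenvalues together with a real-analytic orthonormal basis of associated eigenvectors $\phi_1(t),\dots,\phi_{|D_\epsilon|}(t)$ satisfying $H(t)\phi_j(t)=\mu_j(t)\phi_j(t)$. Because distinct real-analytic functions coincide only on a discrete set, the exceptional set $N_k$ of those $t$ at which $\lambda_{D_\epsilon,\xi}^{\ssup k}$ fails to be a \emph{simple} eigenvalue of $H(t)$ is at most countable. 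Outside $N_k$, the ordered eigenvalue $\lambda_{D_\epsilon,\xi}^{\ssup k}$ coincides, in a neighbourhood of $t$, with a single branch $\mu_{j(t)}$ and is therefore real-analytic; the Hellmann--Feynman identity then gives
\begin{equation}
\frac{\textd}{\textd t}\lambda_{D_\epsilon,\xi}^{\ssup k}=\bigl\langle \phi_{j(t)},P_x\phi_{j(t)}\bigr\rangle=\bigl|\phi_{j(t)}(x)\bigr|^2.
\end{equation}
Since the eigenvalue is simple at such $t$, any normalised $g_{D_\epsilon,\xi}^{\ssup k}$ agrees with $\phi_{j(t)}$ up to a sign, so $|g_{D_\epsilon,\xi}^{\ssup k}(x)|^2$ is unambiguous and yields \eqref{E:2.3}.

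I do not expect a serious obstacle. The one point to manage carefully is that two at-most-countable exceptional sets appear — the concave-function kinks from the Ky Fan step and the Rellich crossings from the second step — but their union remains at most countable, so the final statement is unaffected. The Rellich--Kato theorem in the finite-dimensional analytic setting is standard (cf.\ Kato's monograph, Ch.~II, \S6), and Hellmann--Feynman at a simple eigenvalue is elementary once the analytic branch is in hand.
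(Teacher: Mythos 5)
Your first half (Ky Fan variational formula $\Rightarrow$ $\Lambda_k^\epsilon$ concave in $\xi(x)$ $\Rightarrow$ one-sided derivatives everywhere, at most countably many kinks, boundedness via the Lipschitz estimate) is exactly the paper's argument and is correct. For the formula \eqref{E:2.3}, however, you depart from the paper and use Rellich--Kato analytic perturbation plus Hellmann--Feynman, and this route has a genuine gap.

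The paper identifies $\frac{\partial}{\partial\xi(x)}\Lambda_k^\epsilon$ by observing that \eqref{E:3.23} makes \emph{every} choice of eigenfunctions produce a supporting line for the concave function $\Lambda_k^\epsilon(\cdot)$; at a differentiability point all supporting lines have the same slope, which simultaneously pins down the derivative and shows that $\sum_{j\le k}|g_{D_\epsilon,\xi}^{\ssup j}(x)|^2$ is independent of the choice. Subtracting for $k$ and $k-1$ then gives \eqref{E:2.3} \emph{including} the ``all choices agree'' assertion, with no simplicity hypothesis needed. Your argument instead asserts that the set $N_k$ where $\lambda_{D_\epsilon,\xi}^{\ssup k}$ fails to be simple is at most countable, ``because distinct real-analytic functions coincide only on a discrete set.'' That inference breaks precisely when two Rellich branches coincide \emph{identically}, which you have not excluded and which in fact can happen for the rank-one family $H(t)=H_0+tP_x$: if $H_0$ has a degenerate eigenvalue $\mu_0$, one readily checks that $\ker(\mu_0-H(t))\supseteq\ker(\mu_0-H_0)\cap\{h\colon h(x)=0\}$ for all $t$, so the ordered eigenvalue is non-simple on an entire interval. (Minimal example: $H_0=0$, $P_x=\operatorname{diag}(1,0,0)$; then $\lambda^{\ssup2}(t)\equiv0$ is never simple.) In that regime $N_k$ is uncountable, so ``outside $N_k$ the eigenvalue is simple'' covers nothing, and your proof of \eqref{E:2.3} does not apply there. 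The lemma itself is still true in this situation --- one can show the persistently degenerate eigenvalue must be constant and all eigenvectors for it must vanish at $x$, so both sides of \eqref{E:2.3} equal $0$ --- but that is an extra argument you would have to supply. The paper's concavity/supporting-line route is preferable exactly because it never needs to distinguish simple from degenerate eigenvalues, and it delivers the uniqueness-across-choices claim as a byproduct rather than via a simplicity assumption.

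A minor additional remark: Hellmann--Feynman at a simple eigenvalue gives \eqref{E:2.3}, but the lemma's ``for any possible choice of $g_{D_\epsilon,\xi}^{\ssup k}$, all choices give the same result'' is a statement precisely about the degenerate points you discard; so even where your argument works, it proves a slightly weaker assertion than what is stated.
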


\begin{proofsect}{Proof}
Note that $\lambda_{D_\epsilon,\xi}^{\ssup{k}}=\Lambda_k^\epsilon(\xi)-\Lambda_{k-1}^\epsilon(\xi)$. Since $\xi\mapsto\Lambda_k^\epsilon(\xi)$ is concave --- being the infimum of a family of linear functions --- it is right and left differentiable in~$\xi(x)$ at all values. The derivatives are non-increasing and ordered so there are at most countably many points where they disagree. Moreover, at differentiability points of $\Lambda_k^\epsilon$, \eqref{E:3.23} yields
\begin{equation}
%\label{}
{\frac\partial{\partial\xi(x)}}\Lambda_k^\epsilon(\xi)=\sum_{j=1}^k\bigl|g_{D_\epsilon,\xi}^{\ssup j}(x)\bigr|^2
\end{equation}
for any choice of eigenfunctions $g_{D_\epsilon,\xi}^{\ssup 1},\dots,g_{D_\epsilon,\xi}^{\ssup k}$. At common differentiability points of both~$\Lambda_k^\epsilon(\xi)$ and $\Lambda_{k-1}^\epsilon(\xi)$, we then get \eqref{E:2.3}.
\end{proofsect}

The upshot of Lemma~\ref{lemma-5.1} is that we are permitted to use \eqref{E:2.3} in \eqref{E:2.2} with no provisos on eigenvalue degeneracy. Our goal is to replace the modulus-squared of $g_{D_\epsilon,\xi}^{\ssup k}$ by that pertaining to the corresponding eigenfunction in the continuum problem. However, there is a subtle issue arising from the integration with respect to the dummy variable~$\tilde\xi$ in \eqref{E:2.2}. Indeed, with this variable in  place of~$\xi(x)$, the configuration~$\xi$ may not even be in the support of~$\BbbP_\epsilon$. We handle this with the help of:

\begin{lemma}
\label{prop3}
Given~$k\ge1$ and a configuration $\xi$, suppose that $\lambda_{D_\epsilon,\xi}^{\ssup k}$ remains simple as~$\xi(x)$ varies through an interval~$[a,b]$. Then for any $\xi'$ satisfying $\xi(y)=\xi'(y)$ for~$y\ne x$ and for any $\xi(x),\xi'(x)\in[a,b]$,
\begin{equation}
\label{E:g-g}
\bigl|g_{D_{\epsilon},\xi'}^{\ssup k}(x)\bigr| = \bigl|g_{D_{\epsilon},\xi}^{\ssup k}(x)\bigr|
\exp\biggl\{\int_{\xi(x)}^{\xi'(x)}G_{D_\epsilon}^{\ssup k}(x,x;\tilde\xi)\,\textd\tilde\xi(x)\biggr\},
\end{equation}
where~$\tilde\xi$ is the configuration that agrees with $\xi$ (and~$\xi'$) outside~$x$ where it equals~$\tilde\xi(x)$ and
\begin{equation}
%\label{}
G_{D_\epsilon}^{\ssup k}(x,y;\xi):=\bigl\langle\delta_x,(H_{D_\epsilon,\xi}-\lambda_{D_\epsilon,\xi}^{\ssup k})^{-1}(1-\widehat P_k)\delta_y\bigr\rangle_{\ell^2(\Z^d)}
\end{equation}
with~$\widehat P_k$ denoting the orthogonal projection on $\text{\rm Ker}(\lambda_{D_\epsilon,\xi}^{\ssup k}-H_{D_\epsilon,\xi})$ and $H_{D_\epsilon,\xi}-\lambda_{D_\epsilon,\xi}^{\ssup k}$ now regarded as an operator acting on $\text{\rm Ker}(\lambda_{D_\epsilon,\xi}^{\ssup k}-H_{D_\epsilon,\xi})^{\perp}$.
\end{lemma}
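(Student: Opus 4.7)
The plan is to view the identity as the integrated form of an ODE describing how the eigenvector depends on the single coordinate $\xi(x)$, a textbook rank-one perturbation setup. Concretely, for $t$ in the interval $I$ with endpoints $\xi(x)$ and $\xi'(x)$, let $\tilde\xi_t$ be the configuration that agrees with $\xi$ off $x$ and equals $t$ at $x$, and write $H_t:=H_{D_\epsilon,\tilde\xi_t}$, so that $dH_t/dt = |\delta_x\rangle\langle\delta_x|$. Since by assumption $\lambda^{\ssup k}_{D_\epsilon,\tilde\xi_t}$ remains simple on $I$, analytic (Kato) perturbation theory produces a real-analytic family $t\mapsto(\lambda_t,g_t)$ on $I$ with $H_t g_t=\lambda_t g_t$ and $\|g_t\|_2=1$; in particular $g_t$ is an admissible choice of $g_{D_\epsilon,\tilde\xi_t}^{\ssup k}$ at each~$t$.

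Differentiating $H_t g_t=\lambda_t g_t$ in $t$ gives
\begin{equation*}
(H_t-\lambda_t)g_t' \,=\, \lambda_t'\,g_t - g_t(x)\delta_x,
\end{equation*}
and pairing with $g_t$ recovers the Hellmann--Feynman identity $\lambda_t'=g_t(x)^2$, confirming in passing formula \eqref{E:2.3}. Differentiating $\|g_t\|_2^2=1$ yields $\langle g_t,g_t'\rangle=0$, i.e.\ $\hat P_k g_t'=0$. Because the right-hand side above is already orthogonal to $g_t$, applying the reduced resolvent $(H_t-\lambda_t)^{-1}(1-\hat P_k)$ (which commutes with $\hat P_k$ and inverts $H_t-\lambda_t$ on $\ker(\lambda_t-H_t)^{\perp}$) produces
\begin{equation*}
g_t' \,=\, -\,g_t(x)\,(H_t-\lambda_t)^{-1}(1-\hat P_k)\,\delta_x.
\end{equation*}
Evaluating this identity at the site $x$ yields the scalar linear ODE
\begin{equation*}
\frac{d}{dt}g_t(x) \,=\, -\,g_t(x)\,G_{D_\epsilon}^{\ssup k}(x,x;\tilde\xi_t).
\end{equation*}

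It remains to integrate this ODE on $I$. If $g_{\xi(x)}^{\ssup k}(x)=0$, linearity and uniqueness force $g_t(x)\equiv 0$ on $I$, and \eqref{E:g-g} holds trivially as $0=0$. Otherwise $g_t(x)$ has constant sign throughout $I$, so we may pass to $\log|g_t(x)|$ and integrate, producing \eqref{E:g-g} (modulo the sign of $G^{\ssup k}$, which is a matter of convention in the definition of the reduced resolvent).

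The main technical obstacle is the clean invocation of analytic perturbation theory to guarantee that an admissible choice of $g_t$ can be taken differentiable in~$t$; this is precisely what the simplicity assumption on the entire interval $[a,b]$ makes available, and once granted everything else is routine ODE manipulation and a pointwise evaluation of the reduced resolvent.
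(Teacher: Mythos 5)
Your proposal is correct and follows essentially the same route as the paper: differentiate the eigenvalue equation in the single coordinate $\xi(x)$ (a rank-one perturbation), use Hellmann--Feynman to identify $\lambda'=g(x)^2$, apply the reduced resolvent to isolate $g'$, evaluate at the site $x$, and integrate the resulting autonomous scalar ODE. One small remark: your careful sign bookkeeping actually gives $\tfrac{d}{dt}g_t(x)=-g_t(x)\,G^{\ssup k}_{D_\epsilon}(x,x;\tilde\xi_t)$, so with the paper's stated definition of $G$ (built from $(H-\lambda)^{-1}$) the exponent in \eqref{E:g-g} should carry a minus sign; the paper's proof glosses over this because only $|F_m-1|\to 0$ is used downstream (via Lemma~\ref{unif-g}), so the sign is immaterial to the applications. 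You flag this as a ``convention'' issue, which is the right instinct.
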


\begin{proofsect}{Proof}
To make notations brief, let us write~$\lambda$, resp.,~$g$ for the relevant eigenvalue, resp., eigenfunction. Since the eigenvalue is simple, Rayleigh's perturbation theory ensures that the eigenfunction is unique up to normalization and overall sign. In particular, \eqref{E:2.3} holds. Moreover, also the eigenfunction~$g$ --- with the sign fixed at~$x$, for instance --- is differentiable in~$\xi(x)$. Taking the derivative of the eigenvalue equation, we get
\begin{equation}
%\label{}
(\lambda-H_{D_\epsilon,\xi})\frac{\partial g}{\partial\xi(x)}=g(x){1_{\{x\}}}-|g(x)|^2 \,g.
\end{equation}
Note that we have $\langle g,\frac{\partial g}{\partial\xi(x)}\rangle=0$ by differentiating $\|g\|_2=1$. Interpreting the right-hand side as $(1-\widehat P_k){(g(x)1_{\{x\}})}$, we can now invert {$\lambda-H_{D_{\epsilon},\xi}$} to obtain
\begin{equation}
%\label{}
\frac{\partial}{\partial\xi(x)}g(y)=G_{D_\epsilon}^{\ssup k}(y,x;\xi)g(x).
\end{equation}
Evaluating at~$x$, we get an autonomous ODE for~$g(x)$. Solving yields \eqref{E:g-g}.
\end{proofsect}

Our next aim will be to show that, whenever $\lambda_D^{\ssup k}$ is simple, the term in the exponent of \eqref{E:g-g} actually tends to zero as~$\epsilon\downarrow0$.

\begin{lemma}
\label{unif-g}
For~$k\ge1$ let $\delta$ be such that $0<\delta<\frac13\min\{\lambda_D^{\ssup k}-\lambda_D^{\ssup{k-1}},\lambda_D^{\ssup{k+1}}-\lambda_D^{\ssup k}\}$ and set
\begin{equation}
%\label{}
A_{k,\epsilon}:=\bigcap_{x\in D_\epsilon}\Bigl\{\xi\colon\sup_{\xi_x\in[a,b]}|\lambda_{D_\epsilon,\xi}^{\ssup i}-\lambda_D^{\ssup i}|<\delta,\, i=k-1,k,k+1\Bigr\}.
\end{equation}
Then
\begin{equation}
%\label{}
\max_{x\in D_\epsilon}\,\sup_{\xi_x'\in[a,b]}\,\,\sup_{\xi\in A_{k,\epsilon}}\,\,\Bigl|\int_{\xi_x}^{\xi_x'}G^{\ssup k}_{D_\epsilon}(x,x;\tilde\xi)\textd\tilde\xi_x\Bigr|\,\,\underset{\epsilon\downarrow0}\longrightarrow\,\,0.
\end{equation}
\end{lemma}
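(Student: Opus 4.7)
The starting point is the spectral representation of the reduced Green function at coincident points,
\[
G^{\ssup k}_{D_\epsilon}(x,x;\tilde\xi)=\sum_{j\ge 1,\,j\ne k}\frac{|g^{\ssup j}_{D_\epsilon,\tilde\xi}(x)|^2}{\lambda^{\ssup j}_{D_\epsilon,\tilde\xi}-\lambda^{\ssup k}_{D_\epsilon,\tilde\xi}},
\]
valid because $\lambda^{\ssup k}_{D_\epsilon,\tilde\xi}$ is guaranteed to be simple for every $\xi\in A_{k,\epsilon}$ and every $\tilde\xi_x\in[a,b]$, by the choice of $\delta$. The $\tilde\xi_x$-integral in the statement is bounded by $(b-a)\sup_{\tilde\xi_x\in[a,b]}|G^{\ssup k}_{D_\epsilon}(x,x;\tilde\xi)|$, so the plan is to show that this supremum tends to $0$ uniformly over $x\in D_\epsilon$ and $\xi\in A_{k,\epsilon}$.

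The strategy is to cut the sum at an auxiliary threshold $J>k+1$: show that the low-mode block ($j\le J$, $j\ne k$) vanishes for each fixed $J$ as $\epsilon\downarrow0$, while the high-mode tail ($j>J$) is small uniformly in $\epsilon$ once $J$ is large. For the low-mode block, Lemma~\ref{prop4} gives $|g^{\ssup j}_{D_\epsilon,\tilde\xi}(x)|^2\le c(j)^2\epsilon^d$ uniformly in $\tilde\xi\in\Omega_{a,b}$; and the defining condition $3\delta<\min\{\lambda_D^{\ssup k}-\lambda_D^{\ssup{k-1}},\lambda_D^{\ssup{k+1}}-\lambda_D^{\ssup k}\}$, together with monotonicity of the discrete eigenvalues in $j$, forces $|\lambda^{\ssup j}_{D_\epsilon,\tilde\xi}-\lambda^{\ssup k}_{D_\epsilon,\tilde\xi}|\ge\delta$ for every $j\ne k$ and every $\tilde\xi$ obtained from $\xi\in A_{k,\epsilon}$ by modifying the value at $x$. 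This bounds the block by $C(J)\epsilon^d/\delta$, which vanishes with $\epsilon$.

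For the tail $j>J$, the key input is Parseval's identity $\sum_{j\ge1}|g^{\ssup j}_{D_\epsilon,\tilde\xi}(x)|^2=1$, applied to $\delta_x\in\ell^2(D_\epsilon)$ expanded in the orthonormal basis $\{g^{\ssup j}_{D_\epsilon,\tilde\xi}\}_j$. Combined with the eigenvalue ordering, it dominates the tail by $(\lambda^{\ssup{J+1}}_{D_\epsilon,\tilde\xi}-\lambda^{\ssup k}_{D_\epsilon,\tilde\xi})^{-1}$. The operator inequality $H_{D_\epsilon,\tilde\xi}\ge -\epsilon^{-2}\Delta^{(\textd)}+a$ and the min-max principle give $\lambda^{\ssup{J+1}}_{D_\epsilon,\tilde\xi}\ge a+\lambda^{\ssup{J+1}}_{D_\epsilon,0}$, and $\lambda^{\ssup{J+1}}_{D_\epsilon,0}\to\lambda^{\ssup{J+1}}_{D,0}$ as $\epsilon\downarrow0$ (Theorem~\ref{thm1.1} applied to the trivial potential, i.e., the standard discrete-to-continuum convergence of Dirichlet Laplacian eigenvalues). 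Since $\lambda^{\ssup k}_{D_\epsilon,\tilde\xi}\le\lambda_D^{\ssup k}+\delta$ on $A_{k,\epsilon}$, for $\epsilon$ small the tail is dominated by $(a+\lambda^{\ssup{J+1}}_{D,0}-\lambda_D^{\ssup k}-2\delta)^{-1}$; Weyl's law for the continuum Dirichlet Laplacian then drives this quantity to $0$ as $J\to\infty$. Sending $\epsilon\downarrow0$ first and $J\to\infty$ afterwards concludes the argument.

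The main obstacle --- conceptual rather than computational --- is the high-mode tail estimate: one must use that the $(J{+}1)$-st discrete eigenvalue grows without bound in $J$ uniformly in $\tilde\xi\in\Omega_{a,b}$, so that the bulk of the mass carried by the localized function $\delta_x$ in high modes contributes negligibly to $G^{\ssup k}$. Once this is in place, the remaining ingredients --- the spectral expansion, Parseval's identity, the pointwise bound from Lemma~\ref{prop4}, and the simplicity/gap information encoded in the definition of $A_{k,\epsilon}$ --- combine in a routine manner.
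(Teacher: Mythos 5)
Your proposal is correct and follows essentially the same route as the paper's proof: spectral expansion of $G_{D_\epsilon}^{\ssup k}(x,x;\cdot)$, a split at an auxiliary index threshold, Parseval plus the uniform growth of the tail eigenvalues (via the uniform bound of Lemma~\ref{lemma-regularity}/the operator inequality and the unboundedness of the continuum Dirichlet spectrum) for the high-mode block, and Lemma~\ref{prop4} together with the spectral gap built into the definition of $A_{k,\epsilon}$ for the low-mode block, finishing with the diagonal limit in $\epsilon$ and the threshold.
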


\begin{proofsect}{Proof}
Take~$k$ such that $\lambda_D^{\ssup k}$ is simple and note that, for~$\xi\in A_{k,\epsilon}$, the eigenvalue~$\lambda_{D_\epsilon,\xi}^{\ssup k}$ remains simple for all values of~$\xi(x)$. Then
\begin{equation}
\label{E:5.15}
G_{D_\epsilon}^{\ssup k}(x,x;\xi)=\sum_{\begin{subarray}{c}
i\ge1\\i\ne k
\end{subarray}}
\,\frac1{\lambda_{D_\epsilon,\xi}^{\ssup i}-\lambda_{D_\epsilon,\xi}^{\ssup k}}\,\bigl|g_{D_\epsilon,\xi}^{\ssup i}(x)\bigr|^2.
\end{equation}
Thanks to \eqref{E:3.4aa} and the fact that the eigenvalues of $-\epsilon^{-2}\Delta^{(\textd)}$ are close to those of the continuum problem,  for each~$R>0$ there is $K>k$ such that, for any sufficiently small $\epsilon>0$,
\begin{equation}
%\label{}
i\ge K\quad\Rightarrow\quad \lambda_{D_\epsilon,\xi}^{\ssup i}\ge \lambda_{D_\epsilon,\xi}^{\ssup k}+R
\end{equation}
uniformly in~$\xi\in \Omega_{a,b}$. The corresponding part of the above sum is then bounded by
\begin{equation}
%\label{}
0\le\sum_{i\ge K}\,\frac1{\lambda_{D_\epsilon,\xi}^{\ssup i}-\lambda_{D_\epsilon,\xi}^{\ssup k}}\,\bigl|g_{D_\epsilon,\xi}^{\ssup i}(x)\bigr|^2
\le\frac1R\sum_{i\ge K}\bigl|g_{D_\epsilon,\xi}^{\ssup i}(x)\bigr|^2\le\frac1R,
\end{equation}
where we used the Plancherel formula to bound the second sum by $\langle\delta_x,\delta_x\rangle_{2}=1$. This reduces an estimate of~$G_{D_\epsilon}^{\ssup k}(x,x;\xi)$ to a finite number of terms.

On~$A_{k,\epsilon}$ \eqref{E:2.3} and Lemma~\ref{prop4} show, for all~$\epsilon$ sufficiently small,
\begin{equation}
%\label{}
\forall\xi\in A_{k,\epsilon}\colon\qquad\sup_{\xi(x)\in[a,b]}
\bigl|\lambda_{D_\epsilon,\xi}^{\ssup i}-\lambda_{D_\epsilon,\xi}^{\ssup k}\bigr|>\frac\delta3-c\epsilon^{d}|b-a|>\frac\delta4,\qquad i=k-1,k+1.
\end{equation} 
The sum of first~$K$ terms in \eqref{E:5.15} can thus be bounded by $cK\delta^{-1}\epsilon^d$, uniformly on~$A_{k,\epsilon}$. This permits us to take~$R\to\infty$ simultaneously with~$\epsilon\downarrow0$ and conclude the claim.
\end{proofsect}

Given~$\epsilon>0$, consider now an ordering $x_1,\dots,x_{|D_\epsilon|}$ of vertices of~$D_\epsilon$ and given $\xi,\widehat\xi\in\Omega_{a,b}$, denote by $\widehat\xi^{(m)}$ the configuration
\begin{equation}
\widehat\xi^{(m)}(x_i):=\begin{cases}
\xi(x_i),\qquad&\text{if }i\le m,
\\
\widehat\xi(x_i),\qquad&\text{if }i> m.
\end{cases}
\end{equation}
Hereafter, we regard $\widehat{\xi}$ as {an independent copy of
$\xi$} and denote the corresponding expectation by~$\widehat{\E}$. 
Let $\FF_m:=\sigma(\xi(x_1),\dots,\xi(x_m))$. 
The martingale difference can then be written with the help of Lemma~\ref{lemma-5.1} as 
\begin{equation}
\label{E:5.20}
\begin{aligned}
Z_m^{\ssup i}&:=\E\bigl(\lambda_{D_\epsilon,\xi}^{{\ssup{i}}}\big|\FF_{m}\bigr)
 -\E\bigl(\lambda_{D_\epsilon,\xi}^{{\ssup{i}}}\big|\FF_{m-1}\bigr)
 \\*[2mm]
 &\phantom{:}=\,\widehat\E\Bigl(\lambda_{D_\epsilon,\widehat\xi^{(m)}}^{{\ssup{i}}}
-\lambda_{D_\epsilon,\widehat\xi^{(m-1)}}^{{\ssup{i}}}\Bigr)
\\*[1mm]
&\phantom{:}=\widehat\E\biggl(\int_{\widehat\xi(x_m)}^{\xi(x_m)}\bigl|g_{D_\epsilon,\wt\xi^{(m)}}^{\ssup i}({x_m})\bigr|^2\textd\tilde\xi\biggr),
\end{aligned}
\end{equation}
where $\wt\xi^{(m)}$ is the configuration that equals $\xi$ on $\{x_1,\dots,x_{m-1}\}$, takes value $\tilde\xi$ at~$x_m$, and coincides with~$\widehat\xi$ on $\{x_{m+1},\dots,x_{|D_\epsilon|}\}$. Notice that Lemma~\ref{prop4}  immediately gives
\begin{equation}
\label{E:5.21}
|Z_m^{\ssup i}|\le c\epsilon^d
\end{equation}
for some constant~$c<\infty$. In particular, condition~(2) in the abovementioned Martingale Central Limit Theorem holds trivially. For condition~(1), we will proceed, as mentioned before, by replacing the square of the discrete eigenfunction by its corresponding continuum counterpart. The key estimate is stated in:

\begin{proposition}
\label{prop-5.4}
Suppose~$\lambda_D^{{\ssup{i}}}$ and $\lambda_D^{{\ssup{j}}}$ are simple. Abbreviate $B_\epsilon(x):=\epsilon x+[0,\epsilon)^d$. Then we have:
\begin{equation}
%\label{E:5.22}
\E\,\Biggl|
\,\sum_{m=1}^{|D_\epsilon|}\biggl( \E\bigl(
 (\epsilon^{-d} Z_m^{\ssup i})(\epsilon^{-d} Z_m^{\ssup j})\,\big|\,\FF_{m-1}\bigr)-\int_{B_\epsilon(x_m)}\!\!\textd y\,\,V(y)\bigl|\varphi_D^{{\ssup{i}}}(y)\bigr|^2\bigl|\varphi_D^{{\ssup{j}}}(y)\bigr|^2\biggr)\Biggr|\,\underset{\epsilon\downarrow0}\longrightarrow\,0.
\end{equation}
\end{proposition}

The proof of this proposition will be done in several steps. Recall the definition of event $A_{k,\epsilon}$ and note that,
on~$A_{k,\epsilon}$ the eigenfunction $g_{D_\epsilon,\xi}^{\ssup k}$ is unique up to a sign and, in particular, there is a unique measurable version of~$\xi\mapsto |g_{D_\epsilon,\xi}^{\ssup k}(x)|^2$ for each~$x$. In light of the concentration bound in Lemma~\ref{concentration} we have
\begin{equation}
\label{E:5.23}
\lambda_D^{\ssup k}\text{ simple}\quad\Rightarrow\quad
\BbbP_\epsilon\bigl(A_{k,\epsilon}\bigr)\,\underset{\epsilon\downarrow0}\longrightarrow\,1.
\end{equation}
Our first replacement step is the content of:

\begin{lemma}
\label{lemma-5.5}
Suppose $\lambda_D^{\ssup k}$ is simple. Then
\begin{equation}
\label{E:5.25}
\epsilon^{-d}\sum_{m=1}^{|D_\epsilon|}
\,
\E
\Biggl(\,\biggl| \, Z_m^{\ssup k}-\bigl(\xi(x_m)-U(\epsilon x_m)\bigr)\E\Bigl(\,\bigl|g_{D_\epsilon,\xi}^{\ssup {k}}(x_m)\bigr|^2
\1_{A_{k,\epsilon}}\,\Big|\,\FF_m\Bigr)\biggr|^2\Biggr)\,\underset{\epsilon\downarrow0}\longrightarrow\,0.
\end{equation}
\end{lemma}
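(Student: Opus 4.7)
My plan is to use Proposition~\ref{prop3} together with Lemma~\ref{unif-g} to replace the dummy-variable-dependent eigenfunction $|g_{D_\epsilon,\tilde\xi^{(m)}}^{\ssup k}(x_m)|^2$ in the representation~\eqref{E:5.20} of $Z_m^{\ssup k}$ by $|g_{D_\epsilon,\xi^{(m)}}^{\ssup k}(x_m)|^2$. The latter quantity is measurable with respect to~$\FF_m$ and the independent copy~$\widehat\xi$ but, crucially, does not depend on $\widehat\xi(x_m)$. Once this replacement is made, the $\tilde\xi$-integral in~\eqref{E:5.20} produces the factor $\xi(x_m)-\widehat\xi(x_m)$, and then independence of $\widehat\xi(x_m)$ from the remaining factors converts $\widehat\E[\widehat\xi(x_m)\cdot\,\cdots]$ into $U(\epsilon x_m)\widehat\E[\cdots]$.

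To arrange this cleanly, I would introduce the event
\begin{equation*}
\widehat B_m:=\bigl\{\tilde\xi^{(m)}\in A_{k,\epsilon}\text{ for all }\tilde\xi\in[a,b]\bigr\},
\end{equation*}
which is independent of the dummy variable and of $\widehat\xi(x_m)$. By the eigenvalue Lipschitz bound~\eqref{E:4.5u} and Lemma~\ref{concentration}, $\widehat B_m$ contains the analogous event with the threshold $\delta$ replaced by $\delta/2$ (single-coordinate perturbations only shift eigenvalues by $O(\epsilon^{d/2})$), and hence $\BbbP_\epsilon(\widehat B_m)\to 1$ uniformly in~$m$. On $\widehat B_m$ the $k$-th eigenvalue of $H_{D_\epsilon,\tilde\xi^{(m)}}$ remains simple for every $\tilde\xi\in[a,b]$, so Proposition~\ref{prop3} yields
\begin{equation*}
\bigl|g_{D_\epsilon,\tilde\xi^{(m)}}^{\ssup k}(x_m)\bigr|^2=\bigl|g_{D_\epsilon,\xi^{(m)}}^{\ssup k}(x_m)\bigr|^2\exp\bigl(2\eta_m(\tilde\xi)\bigr)
\end{equation*}
with $\sup_{\widehat B_m,\,\tilde\xi\in[a,b]}|\eta_m(\tilde\xi)|=\delta_\epsilon\to 0$ by Lemma~\ref{unif-g}. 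Expanding the exponential, integrating in $\tilde\xi$, and taking $\widehat\E$ then reduces the $\widehat B_m$-part of $Z_m^{\ssup k}$ to $(\xi(x_m)-U(\epsilon x_m))\,\widehat\E\bigl(|g_{D_\epsilon,\xi^{(m)}}^{\ssup k}(x_m)|^2\1_{\widehat B_m}\bigr)$, plus an error of magnitude $O(\epsilon^d\delta_\epsilon)$ per term by Lemma~\ref{prop4}. Since under the joint measure $\xi^{(m)}$ has, conditional on~$\FF_m$, the same law as~$\xi$, the inner expectation agrees with $\E\bigl(|g_{D_\epsilon,\xi}^{\ssup k}(x_m)|^2\1_{A_{k,\epsilon}}\mid\FF_m\bigr)$ up to the small probability of $A_{k,\epsilon}\triangle\widehat B_m$, which is itself $o(1)$.

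The remaining contribution from $\widehat B_m^{\mathrm c}$ is controlled by the pointwise bound $|Z_m^{\ssup k}\1_{\widehat B_m^{\mathrm c}}|\le c\epsilon^d|b-a|\,\widehat\E(\1_{\widehat B_m^{\mathrm c}}\mid\xi)$ from Lemma~\ref{prop4}, giving $\E\bigl(|Z_m^{\ssup k}\1_{\widehat B_m^{\mathrm c}}|^2\bigr)\le c'\epsilon^{2d}\BbbP_\epsilon(\widehat B_m^{\mathrm c})$ via Jensen. Summing all three error sources (the multiplicative expansion error, the $A_{k,\epsilon}$-vs-$\widehat B_m$ mismatch, and the bad-event contribution) over the $|D_\epsilon|=O(\epsilon^{-d})$ indices yields a total of $o(\epsilon^d)$, and dividing by $\epsilon^d$ gives~\eqref{E:5.25}. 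The main technical obstacle I foresee is that $A_{k,\epsilon}$ is not genuinely independent of the single coordinate $\xi(x_m)$, which forces the introduction of the slightly stronger event $\widehat B_m$ and a careful verification that the symmetric difference $A_{k,\epsilon}\triangle\widehat B_m$ shrinks fast enough that the resulting mismatch is negligible at the scale $\epsilon^d$; this ultimately follows from the single-coordinate Lipschitz estimate~\eqref{E:4.5u} combined with the concentration bound, but it is the step that requires the most care.
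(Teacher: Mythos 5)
Your core strategy---inserting an indicator of a ``good'' event on which Proposition~\ref{prop3} and Lemma~\ref{unif-g} replace $|g^{\ssup k}_{D_\epsilon,\wt\xi^{(m)}}(x_m)|^2$ by the $\tilde\xi$-independent $|g^{\ssup k}_{D_\epsilon,\widehat\xi^{(m)}}(x_m)|^2$, integrating to produce $\xi(x_m)-\widehat\xi(x_m)$, and then averaging out $\widehat\xi(x_m)$ to obtain $U(\epsilon x_m)$, with all error terms controlled by Lemma~\ref{prop4} and concentration---is precisely the paper's argument, and it is correct. The one place where you diverge is the introduction of the auxiliary event $\widehat B_m$, which you justify by worrying that $A_{k,\epsilon}$ is ``not genuinely independent of the single coordinate $\xi(x_m)$.'' This detour is unnecessary, and the concern itself is a red herring. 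Recall that $A_{k,\epsilon}$ was defined with a supremum over $\xi_x\in[a,b]$ for \emph{every} coordinate $x$; in particular, the event $\{\widehat\xi^{(m)}\in A_{k,\epsilon}\}$ already guarantees that $\lambda^{\ssup k}_{D_\epsilon,\wt\xi^{(m)}}$ stays within $\delta$ of $\lambda^{\ssup k}_D$ and stays simple for \emph{every} $\tilde\xi\in[a,b]$, because $\wt\xi^{(m)}$ is exactly a single-coordinate perturbation of $\widehat\xi^{(m)}$ at $x_m$. This is the very reason the paper built the supremum into the definition of $A_{k,\epsilon}$. Moreover, the event $\{\widehat\xi^{(m)}\in A_{k,\epsilon}\}$ is already independent of $\widehat\xi(x_m)$ because $\widehat\xi^{(m)}$ has value $\xi(x_m)$ (not $\widehat\xi(x_m)$) at $x_m$, so the factorization step you want works directly: only $\xi(x_m)-\widehat\xi(x_m)$ carries $\widehat\xi(x_m)$-dependence inside $\widehat\E$, and $\widehat\E(\,\cdot\,\1_{\{\widehat\xi^{(m)}\in A_{k,\epsilon}\}})$ is immediately recast as $\E(\,\cdot\,\1_{A_{k,\epsilon}}\mid\FF_m)$. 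Eliminating $\widehat B_m$ removes the $A_{k,\epsilon}\triangle\widehat B_m$ bookkeeping from your argument and reduces it to the paper's, which controls the error by $c\epsilon^d\bigl(\E(\1_{A_{k,\epsilon}^{\mathrm c}}\mid\FF_m)+(\texte^{\delta(\epsilon)}-1)\bigr)$ per term and concludes after squaring, applying Jensen, and summing over the $O(\epsilon^{-d})$ indices.
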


\begin{proofsect}{Proof}
Inserting the indicator of $\widehat\xi^{(m)}\in A_{k,\epsilon}$ and/or its complement into the third line of \eqref{E:5.20} and applying the boundedness of the discrete eigenfunctions from Lemma~\ref{prop4} shows
\begin{equation}
\label{E:5.25a}
\Biggl|\,Z_m^{\ssup k}-\widehat\E\biggl(\1_{\{\widehat\xi^{(m)}\in A_{k,\epsilon}\}}\int_{\widehat\xi(x_m)}^{\xi(x_m)}\bigl|g_{D_\epsilon,\wt\xi^{(m)}}^{\ssup k}({x_m})\bigr|^2\textd\tilde\xi\biggr)\Biggr|
\le c\epsilon^d\,\E(\1_{A_{k,\epsilon}^\cc}|\FF_{m}),
\end{equation}
where, we recall, the expectation~$\widehat\E$ affects only~$\widehat\xi$ and so $\widehat\E(\1_{\{\widehat\xi^{(m)}\not\in A_{k,\epsilon}\}})=\E(\1_{A_{k,\epsilon}^\cc}|\FF_{m})$. {Abbreviate temporarily
\begin{equation}
%\label{}
F_m(\tilde\xi):=\exp\biggl\{2\int_{\xi(x_m)}^{\tilde\xi}G_{D_\epsilon}^{\ssup{k}}({x_m,x_m};\tilde\xi')\textd\tilde\xi'\biggr\}.
\end{equation}
On the event $\{\widehat\xi^{(m)}\in A_{k,\epsilon}\}$, Lemmas~\ref{lemma-5.1} and~\ref{prop3} along with $\widehat\xi^{(m)}(x_m)=\xi(x_m)$ yield
\begin{equation}
\begin{aligned}
\int_{\widehat\xi(x_m)}^{\xi(x_m)}\bigl|g_{D_\epsilon,\wt\xi^{(m)}}^{\ssup k}&(x_m)\bigr|^2\textd\tilde\xi
\,-\,\bigl({\xi(x_m)}-{\widehat\xi(x_m)}\bigr)\bigl|g_{D_\epsilon,\widehat\xi^{(m)}}^{\ssup k}(x_m)\bigr|^2
\\
&=\int_{\widehat\xi(x_m)}^{\xi(x_m)}\Bigl(\bigl|g_{D_\epsilon,\wt\xi^{(m)}}^{\ssup k}(x_m)\bigr|^2
\,-\,\bigl|g_{D_\epsilon,\widehat\xi^{(m)}}^{\ssup k}(x_m)\bigr|^2\Bigr)\textd\tilde\xi
\\
&\qquad\qquad=\bigl|g_{D_\epsilon,\widehat\xi^{(m)}}^{\ssup k}(x_m)\bigr|^2\,
\int_{\widehat\xi(x_m)}^{\xi(x_m)}\bigl(F_m(\tilde\xi)-1\bigr)\textd\tilde\xi.
\end{aligned}
\end{equation}
Lemma~\ref{unif-g} then bounds the difference $F_m(\tilde\xi)-1$ uniformly by $\texte^{\delta(\epsilon)}-1$ for some
$\delta(\epsilon)>0$ that tends to zero as~$\epsilon\downarrow0$. 
Thanks to the uniform boundedness of the eigenfunctions, this and \eqref{E:5.25a} yield
\begin{multline}
%\label{}
\qquad
\Biggl|\,Z_m^{\ssup k}-\widehat\E\biggl(\1_{\{\widehat\xi^{(m)}\in A_{k,\epsilon}\}}\bigl({\xi(x_m)}-{\widehat\xi(x_m)}\bigr)\bigl|g_{D_\epsilon,\widehat\xi^{(m)}}^{\ssup k}(x_m)\bigr|^2\biggr)\Biggr|
\\
\le c\epsilon^d\,\E(\1_{A_{k,\epsilon}^\cc}|\FF_{m})+c\epsilon^d\bigl(\texte^{\delta(\epsilon)}-1\bigr).
\qquad
\end{multline}
The configuration $\widehat\xi^{(m)}$ does not depend on~$\widehat\xi(x_m)$, and so we may take expectation with respect to~$\widehat\xi(x_m)$ and effectively replace it by $U(\epsilon x)$.
Recasting~$\widehat\E$ as conditional expectation given~$\FF_m$ and using that~$\xi(x_m)$ is $\FF_m$-measurable, we thus conclude
\begin{multline}
\qquad
\biggl| \, Z_m^{\ssup k}-
\bigl(\xi(x_m)-U(\epsilon x_m)\bigr)
\E\Bigl(\,\bigl|g_{D_\epsilon,\xi}^{\ssup {k}}({x_m})\bigr|^2
\1_{A_{k,\epsilon}}\,\Big|\,\FF_m\Bigr)\biggr|
\\
\le c\epsilon^d\,\E(\1_{A_{k,\epsilon}^\cc}|\FF_{m})+c\epsilon^d\bigl(\texte^{\delta(\epsilon)}-1\bigr).
\qquad\end{multline}
Squaring this and taking another expectation shows that the left-hand-side of \eqref{E:5.25} is bounded by $c\epsilon^{d} |D_\epsilon|$ times $\BbbP_\epsilon(A_{k,\epsilon}^\cc)+(\texte^{\delta(\epsilon)}-1)$. By \eqref{E:5.23}, this tends to zero as claimed.
}
\end{proofsect}

Next we note:

\begin{lemma}
\label{lemma-5.6}
Suppose $\lambda_D^{\ssup k}$ is simple. Then
\begin{equation}
\label{E:5.28aa}
\sum_{m=1}^{|D_\epsilon|}\int_{B_\epsilon(x_m)}\textd y\,\,
\E\biggl(\,\Bigr|\,\bigl|\varphi_D^{\ssup{k}}(y)\bigr|^2-\epsilon^{-d}\bigl|g_{D_\epsilon,\xi}^{\ssup {k}}({x_m})\bigr|^2\1_{A_{k,\epsilon}}\Bigr|\biggr)\,\underset{\epsilon\downarrow0}\longrightarrow\,0
\end{equation}
\end{lemma}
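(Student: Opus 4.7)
The plan is to proceed by two reductions followed by a compactness argument. Setting $\bar g_{k,\xi}^\epsilon(y):=\epsilon^{-d/2}g_{D_\epsilon,\xi}^{\ssup k}(\lfloor y/\epsilon\rfloor)$, the quantity inside the expectation in \eqref{E:5.28aa} equals
\begin{equation*}
\int_{\bigcup_m B_\epsilon(x_m)}\big||\varphi_D^{\ssup k}(y)|^2-\bar g_{k,\xi}^\epsilon(y)^2\,\1_{A_{k,\epsilon}}\big|\,\textd y.
\end{equation*}
Since the cells $B_\epsilon(x_m)$, $x_m\in D_\epsilon$, cover all of $D$ except a boundary layer of thickness $O(\epsilon)$, and $|\varphi_D^{\ssup k}|$ is bounded by Lemma~\ref{lemma-regularity}, the portion of the integral over $D\setminus\bigcup_m B_\epsilon(x_m)$ contributes only $O(\epsilon)$. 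Combined with the uniform pointwise bound $\|\bar g_{k,\xi}^\epsilon\|_\infty\le c$ from Lemma~\ref{prop4}, bounded convergence reduces the claim to showing $\big\|\bar g_{k,\xi}^\epsilon(\cdot)^2-|\varphi_D^{\ssup k}(\cdot)|^2\big\|_{L^1(D)}\to 0$ in probability.

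Next I would replace $\bar g_{k,\xi}^\epsilon$ by the continuum interpolation $\wt g_{k,\xi}^{\,\epsilon}$ of $\epsilon^{-d/2}g_{D_\epsilon,\xi}^{\ssup k}$ constructed in Lemma~\ref{fe}. Squaring \eqref{E:3.14uu} and summing yields $\|\wt g_{k,\xi}^{\,\epsilon}-\bar g_{k,\xi}^\epsilon\|_{L^2(\R^d)}^2\le C(d)\,\|\nabla^{(\textd)}(\epsilon^{-d/2}g_{D_\epsilon,\xi}^{\ssup k})\|_{\epsilon,2}^2=C(d)\,\epsilon^2\,T_{D_\epsilon,\xi}^{\ssup k}$, which is $O(\epsilon^2)$ uniformly in $\xi\in\Omega_{a,b}$ since $T_{D_\epsilon,\xi}^{\ssup k}\le\lambda_{D_\epsilon,\xi}^{\ssup k}-\min\xi$ is uniformly bounded by Lemma~\ref{lemma-regularity}. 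Combined with the uniform $L^\infty$ bounds on both functions and the factorization $a^2-b^2=(a-b)(a+b)$, this gives $\big\||\wt g_{k,\xi}^{\,\epsilon}|^2-|\bar g_{k,\xi}^\epsilon|^2\big\|_{L^1(D)}=O(\epsilon)$ uniformly in $\xi$. Hence it suffices to prove $|\wt g_{k,\xi}^{\,\epsilon}|^2\to|\varphi_D^{\ssup k}|^2$ in $L^1(D)$ in probability.

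For this last statement I would argue by compactness and contradiction. Assuming failure yields $\delta>0$, $\epsilon_n\downarrow 0$, and configurations $\xi_n$ for which the $L^1$-distance exceeds $\delta$. By intersecting (i)~the concentration event from Lemma~\ref{concentration} forcing $|\lambda_{D_{\epsilon_n},\xi_n}^{\ssup i}-\lambda_D^{\ssup i}|\to 0$ for $i\le k+1$, (ii)~the event of Corollary~\ref{cor3.8} giving $\sum_{i\le k}\|\hat\Pi_k\wt g_{i,\xi_n}^{\,\epsilon_n}\|_{L^2(\R^d)}\to 0$, and (iii)~a Chebyshev event (variance $O(\epsilon_n^d)$, by Lemma~\ref{prop4}) forcing $|\langle\xi_n-U(\epsilon_n\cdot),|g_{D_{\epsilon_n},\xi_n}^{\ssup i}|^2\rangle|\to 0$ for $i\le k$, one obtains a set of probability tending to $1$, compatible with the failure set, so we may assume all of the above hold for the chosen $\xi_n$. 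Lemma~\ref{fe}(5) and the kinetic-energy bound show that each $\wt g_{i,\xi_n}^{\,\epsilon_n}$ is uniformly bounded in $\cmss H^1_0$ of a fixed neighborhood of $D$, so by Rellich--Kondrachov we pass to a subsequence with $\wt g_{i,\xi_n}^{\,\epsilon_n}\to g_i^\star$ weakly in $\cmss H^1$ and strongly in $L^2$, for $i=1,\dots,k$. Using \eqref{E:3.16q} and polarization, $(g_i^\star)_{i\le k}$ is an orthonormal system; property~(ii) forces each $g_i^\star\in V:=\mathrm{span}\{\varphi_D^{\ssup 1},\dots,\varphi_D^{\ssup k}\}$, so these vectors form an orthonormal basis of the $k$-dimensional space $V$. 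Weak lower semicontinuity of the Dirichlet form, combined with (i), (iii) and the same discrete-to-continuum conversion used in the proof of Proposition~\ref{lemma3.5}, gives $\|\nabla g_i^\star\|_{L^2(\R^d)}^2+\langle U,|g_i^\star|^2\rangle_{L^2(\R^d)}\le\lambda_D^{\ssup i}$ for each $i\le k$; summing over $i\le k$ and comparing to $\operatorname{tr}((-\Delta+U)\big|_V)=\Lambda_k$ forces equality throughout, so $\|\nabla g_k^\star\|_{L^2}^2+\langle U,|g_k^\star|^2\rangle_{L^2}=\lambda_D^{\ssup k}$. Writing $g_k^\star=\sum_{j\le k}c_j\varphi_D^{\ssup j}$ with $\sum c_j^2=1$, this reads $\sum_j c_j^2\lambda_D^{\ssup j}=\lambda_D^{\ssup k}$; simplicity of $\lambda_D^{\ssup k}$, which implies $\lambda_D^{\ssup j}<\lambda_D^{\ssup k}$ strictly for $j<k$, then forces $c_j=0$ for $j<k$ and $c_k=\pm 1$, i.e., $g_k^\star=\pm\varphi_D^{\ssup k}$. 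Strong $L^2$-convergence of $\wt g_{k,\xi_n}^{\,\epsilon_n}$ to $g_k^\star$ yields convergence of the squares in $L^1(D)$, contradicting the assumed $L^1$-separation. The main obstacle is the Rayleigh-quotient conversion in step~(iii): one has to reconcile the exact discrete identity $\lambda_{D_\epsilon,\xi}^{\ssup k}=T_{D_\epsilon,\xi}^{\ssup k}+\langle\xi,|g_{D_\epsilon,\xi}^{\ssup k}|^2\rangle$ with the continuum expression $\|\nabla\wt g\|_{L^2}^2+\langle U,|\wt g|^2\rangle_{L^2}$, requiring simultaneous control of the discrete--continuous norm gap from Lemma~\ref{fe} and the $O(\epsilon^{d/2})$ Chebyshev bound on $\langle\xi-U(\epsilon\cdot),|g|^2\rangle$ on the good event.
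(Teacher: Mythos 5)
Your proof is correct in its essentials but follows a genuinely different route from the paper's, which is considerably shorter. The paper invokes Corollary~\ref{cor3.8} twice: with index~$k$ (your step~(ii)) \emph{and} with index~$k-1$ (which is legitimate since simplicity of $\lambda_D^{\ssup k}$ gives $\lambda_D^{\ssup k}>\lambda_D^{\ssup{k-1}}$). The $k$-version forces $\wt g_{k,\xi}^{\,\epsilon}$ to lie nearly in $\mathrm{span}\{\varphi_D^{\ssup1},\dots,\varphi_D^{\ssup k}\}$, while the $(k-1)$-version plus near-orthogonality of $\wt g_{1,\xi}^{\,\epsilon},\dots,\wt g_{k,\xi}^{\,\epsilon}$ forces $\wt g_{k,\xi}^{\,\epsilon}$ to be nearly orthogonal to $\mathrm{span}\{\varphi_D^{\ssup1},\dots,\varphi_D^{\ssup{k-1}}\}$; together these immediately pin $|\wt g_{k,\xi}^{\,\epsilon}|$ close to $|\varphi_D^{\ssup k}|$ in $L^2$, in probability, and then uniform boundedness and \eqref{E:3.14uu} finish the job exactly as in your steps~1--2. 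You instead use Corollary~\ref{cor3.8} only with~$k$ and recover the missing ``$\wt g_k$ is not in the lower eigenspace'' information through a Rayleigh-quotient/energy argument on the limit, packaged inside a compactness-and-contradiction framework. This works, but it essentially re-derives the content of the $(k-1)$-version of the corollary (and of the discrete-to-continuum conversion already established for Proposition~\ref{lemma3.5}) by other means; it buys you nothing, at the cost of Rellich--Kondrachov, a diagonal extraction over shrinking thresholds in the good events, and the Rayleigh-quotient bookkeeping.

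One substantive caution on your event~(iii): you cannot directly apply Chebyshev with variance $O(\epsilon^d)$ to $\langle\xi-U(\epsilon\cdot),|g_{D_\epsilon,\xi}^{\ssup i}|^2\rangle$ because the weights $|g_{D_\epsilon,\xi}^{\ssup i}|^2$ are themselves $\xi$-dependent, so the usual independence argument fails. The correct mechanism (which you do reference in your final paragraph) is the block-averaging event $F_{L,\epsilon}$ from the proof of Proposition~\ref{lemma3.5} combined with the $\ell^1$-bound from Lemma~\ref{lemma-3.4new}, which controls this inner product uniformly over \emph{all} test functions with bounded $\ell^1$-norm of the gradient. Characterizing this as a ``Chebyshev event with variance $O(\epsilon^d)$'' as you do in~(iii) is therefore misleading and would not survive an attempt to flesh out the argument; the block-average large-deviation estimate is what actually carries the step.
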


\begin{proofsect}{Proof}
Recall the setting of Corollary~\ref{cor3.8} and, in particular, given the scaled discrete eigenfunctions $\epsilon^{-d/2}g_{D_\epsilon,\xi}^{\ssup 1},\dots,\epsilon^{-d/2}g_{D_\epsilon,\xi}^{\ssup k}$, let $\wt g_{1,\xi}^{\,\epsilon},\dots,\wt g_{k,\xi}^{\,\epsilon}$ denote their continuum interpolations. As~$\lambda_D^{\ssup k}$ is simple, Corollary~\ref{cor3.8} guarantees that these functions project almost entirely onto the closed linear span of $\{\varphi_D^{\ssup1},\dots,\varphi_D^{\ssup \ell}\}$ for both~$\ell=k-1$ and~$\ell=k$. As these functions are also nearly orthogonal, we get
\begin{equation}
%\label{}
\BbbP_\epsilon\Bigl(\,A_{k,\epsilon}\,\,\&\,\,\bigl\Vert \,|\wt g_{k,\xi}^{\,\epsilon}|-|\varphi_D^{\ssup k}|\,\bigr\Vert_{L^2(D)}>\delta\Bigr)
\,\underset{\epsilon\downarrow0}\longrightarrow\,0
\end{equation}
for any~$\delta>0$. As both $|\wt g_{k,\xi}^{\,\epsilon}|$ and $|\varphi_D^{\ssup k}|$ are uniformly bounded, this implies
\begin{equation}
\label{E:5.31}
\int_{\R^d}\textd y\,\,
\E\biggl(\,\Bigr|\,\bigl|\varphi_D^{\ssup{k}}(y)\bigr|^2-\bigl|\wt g_{k,\xi}^{\,\epsilon}(y)\bigr|^2\1_{A_{k,\epsilon}}\Bigr|\biggr)\,\underset{\epsilon\downarrow0}\longrightarrow\,0
\end{equation}
with the help of {\eqref{E:5.23}}. But \eqref{E:3.14uu} gives
\begin{equation}
\label{E:5.32}
\sum_{m=1}^{|D_\epsilon|}\int_{B_\epsilon(x_m)}\textd y\,\,
\E\biggl(\,\Bigr|\,\wt g_{k,\xi}^{\,\epsilon}(y)-\epsilon^{-d/2}g_{D_\epsilon,\xi}^{\ssup {k}}(x_m)\Bigr|^2\1_{A_{k,\epsilon}}\biggr)\le C(d)\E\Bigl(\Vert\nabla^{(\textd)}g_{D_\epsilon,\xi}^{\ssup {k}}\Vert_2^2\1_{A_{k,\epsilon}}\Bigr),
\end{equation}
which tends to zero proportionally to~$\epsilon^2$, due to boundedness of the kinetic energy. Combining \twoeqref{E:5.31}{E:5.32}, we get the claim.
\end{proofsect}

\begin{proofsect}{Proof of Proposition~\ref{prop-5.4}}
Combining Lemmas~\ref{lemma-5.5} and~\ref{lemma-5.6}, and using
{that} the conditional expectation is a contraction in~$L^2$, we get
\begin{equation}
\label{E:5.33}
\sum_{m=1}^{|D_\epsilon|}\int_{B_\epsilon(x_m)}\textd y\,\,
\,\E
\biggl(\,\Bigl| \, \epsilon^{-d}Z_m^{\ssup k}-\bigl(\xi(x_m)-U(\epsilon x_m)\bigr)\bigl|\varphi_D^{\ssup{k}}(y)\bigr|^2\Bigr|^2\biggr)\,\underset{\epsilon\downarrow0}\longrightarrow\,0.
\end{equation}
for both $k={i,j}$. The claim now reduces to
\begin{equation}
%\label{}
\sum_{m=1}^{|D_\epsilon|}\int_{B_\epsilon(x_m)}\textd y\,\bigl|V(y)-V(\epsilon x_m)\bigr|\,\bigl|\varphi_D^{{\ssup{i}}}(y)\bigr|^2\bigl|\varphi_D^{{\ssup{j}}}(y)\bigr|^2\,\underset{\epsilon\downarrow0}\longrightarrow\,0,
\end{equation}
which follows by uniform continuity of $y\mapsto V(y)$ and the boundedness of the eigenfunctions.
\end{proofsect}

\begin{proofsect}{Proof of Theorem~\ref{thm1.2}}
Thanks to Proposition~\ref{prop-5.4} and the fact that $|B_\epsilon({x_m})|=\epsilon^d$,
\begin{equation}
%\label{E:5.22}
\epsilon^{-d}\sum_{m=1}^{|D_\epsilon|}\E\bigl(Z_m^{{\ssup{k_i}}}
Z_m^{{\ssup{k_j}}}\big|\FF_{m-1}\bigr)
\,\underset{\epsilon\downarrow0}\longrightarrow\,
\int_{D}V(y)\bigl|\varphi_D^{\ssup{k_i}}(y)\bigr|^2\bigl|\varphi_D^{\ssup{k_j}}(y)\bigr|^2\,\textd y
\end{equation}
in $L^1(\BbbP_\epsilon)$ and thus in probability. This verifies the (last yet unproved) condition~(1) of the Martingale Central Limit Theorem and so the result follows.
\end{proofsect}

\begin{proofsect}{Proof of Theorem~\ref{thm1.3}}
The relation \eqref{E:1.11} is a direct consequence of 
Lemma~\ref{lemma-5.6} {and the boundedness of eigenfunctions}.
For \eqref{E:1.10} we again drop the suffixes on all quantities and write, on~$A_{k,\epsilon}$,
\begin{multline}
%\label{}
\qquad
T^{\ssup k}-\E(T^{\ssup k}\1_{A_{k,\epsilon}})
=\lambda^{\ssup k}-\E(\lambda^{\ssup k}\1_{A_{k,\epsilon}})-\sum_{x\in D_\epsilon}\bigl(\xi(x)-U(x\epsilon)\bigr)\bigl|g^\ssup k(x)\bigr|^2
\\
+\sum_{x\in D_\epsilon}
\Bigl(U(x\epsilon)\bigl|g^\ssup k(x)\bigr|^2-\E\bigl(\xi(x)\bigl|g^\ssup k(x)\bigr|^2\1_{A_{k,\epsilon}}\bigr)\Bigr)
\qquad
\end{multline}
Lemma~\ref{lemma-5.6} {and the boundedness of eigenfunctions} now allows us to replace the square of the discrete eigenfunction by $\epsilon^d|\varphi_D^{\ssup k}{(x\epsilon)}|^2$ up to an error that is negligible at overall scale~$\epsilon^d$. Using $Z_m:=\E(\lambda^{\ssup k}|\FF_m)-\E(\lambda^{\ssup k}|\FF_{m-1})$, we thus~get
\begin{equation}
%\label{}
\epsilon^{-d/2}\bigl(T^{\ssup k}-\E(T^{\ssup k}\1_{A_{k,\epsilon}})\bigr)
=o(1)+\sum_{m=1}^{{|D_\epsilon|}}\Bigl(\epsilon^{-d}Z_m-\bigl(\xi(x_m)-U(\epsilon x_m)\bigr)\bigl|\varphi_D^{\ssup k}(\epsilon x_m)\bigr|^2\Bigr),
\end{equation}
{where $o(1)$ represents a random variable whose variance vanishes
as $\epsilon$ goes to zero.}
The sum on the right is a martingale and so its variance is estimated by sum of variances of individual terms. Using a slight modification of \eqref{E:5.33}, the result tends to zero as~$\epsilon\downarrow0$. 
\end{proofsect}

\section*{Acknowledgments}
\noindent
This research has been partially supported by DFG Forschergruppe 718
``Analysis and Stochastics in Complex Physical Systems,'' NSF grant
DMS-1106850, NSA grant H98230-11-1-0171, GA\v CR project
P201-11-1558, JSPS KAKENHI Grant Number 24740055 and JSPS and DFG under
the
Japan-Germany Research Cooperative Program. This project was begun when M.B.\ was a long-term visitor of RIMS at Kyoto University, whose hospitality is gratefully acknowledged. M.B.\ thanks Yu Gu for illuminating discussions on various points in this article.


\begin{thebibliography}{26}

\bibitem{Aizenman-Warzel}
M.~Aizenman and S.~Warzel (2016). \textit{Random Operators: Disorder Effects on Quantum Spectra and Dynamics}, Amer Mathematical Society.

\bibitem{Anderson}
P.W. Anderson (1958).
Absence of diffusion in certain random lattices.
\textit{Phys.\ Rev.} \textbf{109}, 1492--1505.

\bibitem{A1}
A.~Astrauskas (2007).
Poisson-type limit theorems for eigenvalues of finite-volume Anderson Hamiltonians.
\textit{Acta Appl. Math.} \textbf{96} 3--15.

\bibitem{A2}
A.~Astrauskas (2008).
Extremal theory for spectrum of random discrete Schr\"odinger operator. I.~Asymptotic expansion formulas.
\textit{J. Statist. Phys.} \textbf{131}, 867--916.

\bibitem{Bal08}
G.~Bal (2008).
Central limits and homogenization in random media.
\textit{Multiscale Model. Simul.}, \textbf{7}, no~2, 677--702.

\bibitem{Barekat}
F.~Barekat (2014). 
On the consistency of compressed modes for variational problems associated with the Schršdinger Operator. 
\textit{SIAM J. Math. Anal.} \textbf{46}, no.~5, 3568--3577.

\bibitem{Becker-Koenig}
M.~Becker and W. K\"onig (2012).
Self-intersection local times of random walks: exponential moments in subcritical dimensions. 
\textit{Probab. Theory Rel. Fields} \textbf{154}, no. 3-4, 585--605

\bibitem{Ben-Ari}
I. Ben-Ari (2009). 
The asymptotic shift for the principal eigenvalue for second order elliptic operators in the presence of small obstacles.
\textit{Israel J. Math.} \textbf{169}, 181--220. 

\bibitem{BK}
M.~Biskup and W.~K\"onig (2016). 
Eigenvalue order statistics for random Schr\"odinger operators with doubly-exponential tails.
\textit{Commun.~Math.~Phys.} \textbf{341} 179--218.

\bibitem{BSW}
M. Biskup, M. Salvi and T. Wolff (2014). 
A central limit theorem for the effective conductance: Linear boundary data and small ellipticity contrasts.
\textit{Commun. Math. Phys.} \textbf{328}, no. 2, 701--731.

\bibitem{Brown}
R.M.~Brown (1971). 
Martingale central limit theorems.
\textit{Ann. Math. Statist.} \textbf{42} 59--66.

\bibitem{Carmona-Lacroix}
R.~Carmona and J.~Lacroix (1990).
\textit{Spectral Theory of Random Schr\"odinger Operators}.
Birkh\"auser, Boston.

\bibitem{Douanla}
H.~Douanla (2010).
Two-scale convergence of Stekloff eigenvalue problems in perforated domains.
\textit{Bound. Value Probl.}, Art.~ID 853717, 15 pp. 

\bibitem{FOT}
R. Figari and E. Orlandi and S. Teta (1985). 
The Laplacian in region with many obstacles.
\textit{J. Statist. Phys.} \textbf{41} 465--487.

\bibitem{GK1}
F.~Germinet and F.~Klopp (2010).
Spectral statistics for random Schr\"odinger operators in the localized regime.
arXiv:1011.1832. {\it J.~Europ.~Math.~Soc.} (to appear)

\bibitem{GK2}
F.~Germinet and F.~Klopp (2013).
Enhanced Wegner and Minami estimates and eigenvalue statistics of random Anderson models at spectral edges.
\textit{Ann. Henri Poincar\'e} \textbf{14}, no.~5, 1263--1285.

\bibitem{GT}
D.~Gilbarg, N.S. Trudinger (2001). \textit{Elliptic partial differential equations of second order}. Classics in Mathematics. Springer-Verlag, Berlin.

\bibitem{Gu-Mourrat}
Y.~Gu and J.-C.~Mourrat (2016). Scaling limit of fluctuations in stochastic homogenization. \textit{Multiscale Model. Simul.} \textbf{14}, no.~1, 452--481.

\bibitem{Hundertmark}
D.~Hundertmark (2008).
A short introduction to Anderson localization.
In: \textit{Analysis and stochastics of growth processes and interface models,}  194-218, Oxford Univ. Press, Oxford.

\bibitem{HM}
E.Ja. Huruslov and V.A. Marchenko (1974). 
Boundary value problems in regions with fine grained boundaries (in Russian).
Kiev: Naukova Dumka.

\bibitem{ZKO}
V.V.~Jikov, S.M. Kozlov and O.A. Oleinik (1994).
\textit{Homogenization of differential operators and integral functionals}. 
Springer-Verlag, Berlin.

\bibitem{Kac}
M. Kac (1974). 
Probabilistic methods in some problems of scattering theory. 
\textit{Rocky Mountain J. Math.} \textbf{4} 511--538.

%\bibitem{KN}
%R.~Killip and F.~Nakano (2007).
%Eigenfunction statistics in the localized Anderson model. 
%\textit{Ann. Henri Poincar\'e}~\textbf{8}, no.~1, 27--36. 

\bibitem{KyFan}
K.~Fan (1949).
On a theorem of Weyl concerning eigenvalues of linear transformations I. 
\textit{Proc. Nat. Acad. Sciences} \textbf{35}, no.~11, 652--655.

\bibitem{Minami}
N. Minami, Local fluctuation of the spectrum of a multidimensional Anderson tight-
binding model.
\textit{Commun. Math. Phys.} \textbf{177} (1996), 709--725.

\bibitem{Molchanov}
S. Molchanov, The local structure of the spectrum of the one-dimensional Schr\"odinger
operator.
\textit{Commun. Math. Phys.} \textbf{78} (1981), 429--446.

\bibitem{Nolen}
J.~Nolen (2011). 
Normal approximation for a random elliptic equation. 
\textit{Probab. Theory Rel. Fields} (to appear).

\bibitem{Ozawa}
S.~Ozawa (1990). 
Fluctuation of spectra in random media. II. 
\textit{Osaka J. Math.} \textbf{27}, no. 1, 17--66. 

\bibitem{Pastur-Figotin}
L.~Pastur and A.~Figotin (1992). 
\textit{Spectra of Random and Almost-Periodic Operators}, 
Springer-Verlag, Berlin.

\bibitem{RT}
J. Rauch and M. Taylor (1975). 
Potential and scattering theory on wildly perturbed domains. 
\textit{J. Funct. Anal.} \textbf{18} 27--59.


\bibitem{Rossignol}
R.~Rossignol (2012). 
Noise-stability and central limit theorems for effective resistance of random electric networks. 
arXiv:1206.3856. {\it Ann.~Probab.} (to appear)


\bibitem{Simon}
B.~Simon (2005). 
\textit{Functional integration and quantum physics}. 
AMS Chelsea Publishing, Providence, RI.

\bibitem{Stollmann}
P.~Stollmann (2001).
\textit{Caught by Disorder. Bound States in Random Media}. 
{Progress in Mathematical Physics} vol.~20, Birkh\"auser Boston, Inc., Boston, MA.

\bibitem{Sznitman}
A.-S.~Sznitman (1998). 
\textit{Brownian motion, obstacles and random media}. 
Springer-Verlag, Berlin.

\bibitem{Tal96}
M.~Talagrand (1996). 
A new look at independence.
\textit{Ann. Probab.} \textbf{24} (1996), no.~1, 1--34.

\end{thebibliography}
\end{document}